\newenvironment{@abssec}[1]{%
     \if@twocolumn
       \section*{#1}%
     \else
       \vspace{.05in}\footnotesize
       \parindent .2in
         {\upshape\bfseries #1. }\ignorespaces
     \fi}
     {\if@twocolumn\else\par\vspace{.1in}\fi}
\newenvironment{abstractinline}{\begin{@abssec}{Abstract}}{\end{@abssec}}
\newenvironment{keywords}{\begin{@abssec}{Keywords}}{\end{@abssec}}
\newenvironment{ams}{\begin{@abssec}{AMS subject classifications}}{\end{@abssec}}
\newtheoremstyle{myThm} 
    {\topsep}                    
    {\topsep}                    
    {\itshape}                   
    {}                           
    {\bfseries} 
    {.}                          
    {.5em}                       
    {}  
\newtheoremstyle{myRem} 
    {\topsep}                    
    {\topsep}                    
    {}                   
    {}                           
    {\bfseries} 
    {.}                          
    {.5em}                       
    {}  
\newtheoremstyle{myDef} 
    {\topsep}                    
    {\topsep}                    
    {}                   
    {}                           
    {\bfseries} 
    {.}                          
    {.5em}                       
    {}  
\theoremstyle{myThm}
\newtheorem{theorem}{Theorem}[section]
\newtheorem{lemma}[theorem]{Lemma}
\theoremstyle{myRem}
\newtheorem{remark}[theorem]{Remark}
\theoremstyle{myDef}
\newtheorem{definition}[theorem]{Definition}
\newtheorem{example}[theorem]{Example}
\newenvironment{manuallemma}[1]{%
  \manuallemmainner
}{\endmanuallemmainner}
\newcommand{\rd}{\mathrm{d}}
\def\R{\mathbb{R}}
\def\C{\mathbb{C}}
\def\F{\mathbb{F}}
\def\M{\mathcal{M}}
\def\N{\mathcal{N}}
\def\St{\text{St}}
\def\S{\mathbb{S}}
\def\grad{\nabla}
\DeclareMathOperator*{\argmin}{arg\,min}
\def\rank{\text{rank}}
\def\U{\mathcal{U}}
\def\grad{\text{grad}}
\newcommand{\zzy}[1]{{#1}}
\newcommand{\blue}[1]{{{#1}}}
\begin{document}
\title{Asymptotic Escape of Spurious Critical Points  \\
on the Low-rank Matrix Manifold}
\date{\today}
\author{Thomas Y. Hou\thanks{ACM, Caltech. Correspondence to: Ziyun Zhang (\texttt{zyzhang@caltech.edu}).}, \,
Zhenzhen Li\footnotemark[1], \,
Ziyun Zhang\footnotemark[1]}
\maketitle

\begin{abstractinline}
\blue{We show that on the manifold of fixed-rank and symmetric positive semi-definite matrices, the Riemannian gradient descent algorithm almost surely escapes some spurious critical points on the boundary of the manifold. Our result is the first to partially overcome the incompleteness of the low-rank matrix manifold without changing the vanilla Riemannian gradient descent algorithm. The spurious critical points are some rank-deficient matrices that capture only part of the eigen components of the ground truth. Unlike classical strict saddle points, they exhibit very singular behavior. We show that using the dynamical low-rank approximation and a rescaled gradient flow, some of the spurious critical points can be converted to classical strict saddle points in the parameterized domain, which leads to the desired result. Numerical experiments are provided to support our theoretical findings. 
}


\end{abstractinline} 

\begin{keywords}
Low-rank matrix manifold, Riemannian gradient descent, spurious critical points, strict saddles 
\end{keywords}

\begin{ams}
58D17, 65F10, 90C26, 15A23
\end{ams}

\section{Introduction}


Low-rank matrix recovery problems are prevalent in modern data science, artificial intelligence and related technological fields. The low-rank property of matrices is widely exploited to extract the hidden low-complexity structure in massive datasets from machine learning, signal processing, imaging science, advanced statistics, information theory and quantum mechanics, just to name a few. 

The {low-rank matrix manifold} 
\cite{helmke2012optimization, helmke1995critical}
has gained popularity in recent years since it gives a neat description of low-rank matrices. The set of matrices with the same size $m$ by $n$ and a fixed rank $r$ forms a smooth manifold $\M_r$, which is a nonconvex set that is locally isomorphic to the Euclidean space. Many nonconvex optimization techniques can be transferred to $\M_r$ without much difficulty. Among them, the Riemannian gradient descent, the manifold version of the vanilla gradient descent, demonstrates nearly optimal convergence rate and practical flexibility in a number of problems, \blue{see e.g. \cite{cambier2016robust, chi2019nonconvex, hou2020fast,  schneider2015convergence, vandereycken2013low, wei2016guarantees}.}

A fundamental problem has yet remained open in the global analysis of optimization on the low-rank matrix manifold. This comes from the fact that $\M_r$ is \emph{not} a complete set. The boundary of $\M_r$ consists of matrices with rank smaller than $r$, which are not in $\M_r$ themselves. In other words, $\overline{\M_r}\backslash\M_r = \cup_{s=0}^{r-1}\M_s \not\subset \M_r$. There is no guarantee that the limit point of an iterative sequence will converge to a rank-$r$ ground truth instead of being stuck at some lower-rank \emph{spurious critical points}. 

In our previous work \cite{hou2020fast}, it has been proved that under certain assumptions, the converging set of the spurious points has very small measure. This means that starting from a randomly sampled initialization, the iterative sequence avoids these spurious critical points \emph{with high probability}. However, practical applications seem to imply an even stronger result. In fact, we observe that from random initializations, Riemannian gradient descent \emph{almost surely} avoids these spurious critical points. This motivates us to conjecture that their converging sets actually have zero measure.

To understand this phenomenon, it helps to compare it with the asymptotic escape of strict saddle points by gradient descent \cite{lee2016gradient}. The two are remarkably similar, except that the spurious critical points in our context are \emph{not} strict saddles. Instead, the spurious critical points are singular points with negative infinity Hessian directions. More advanced techniques are needed to deal with their singularity.

In this paper, we give a partially confirmatory answer to the aforementioned conjecture. We show that the Riemannian gradient flow and the Riemannian gradient descent with varying stepsize asymptotically escape the rank-$(r-1)$ spurious critical points on the rank-$r$ \blue{symmetric positive semi-definite (SPSD)} manifold. We propose to use the dynamical low-rank approximation \cite{koch2007dynamical} to describe the gradient flow on the low-rank matrix manifold. We then introduce a rescaled gradient flow to remove the singularity of the ODE system. After rescaling, classical saddle escape theorems can be applied to derive the desired result.

Below is an example which illustrates that the spurious critical points can be the limit points of the Riemannian gradient descent algorithm, but the required initialization is so special that it is almost impossible under random initialization.
\begin{example}
\label{ex: spurious}
    Assume that $n=3$, $r=2$. We use the vanilla Riemannian gradient descent (Riemannian GD) algorithm $Z_{k+1} = R(Z_k - \alpha \cdot P_{T_{Z_k}}(Z_k-X))$ to minimize the least squares loss function $f(Z) = \frac{1}{2}\|Z-X\|_F^2$ on the manifold $\M_2 = \{Z: \, Z\in\R^{3\times 3}, \, \rank(Z)=2\}$. Here $P_{T_{Z_k}}(\cdot)$ is the projection onto the tangent space of $\M_2$ at $Z_k$ and $R(\cdot)$ is the retraction, cf. Section \ref{sec:manifold}. Let 
    \begin{align*}\small
        X=\begin{pmatrix}2&0&0\\0&1&0\\0&0&0\end{pmatrix}, \qquad Z_0=\begin{pmatrix}2&0&0\\0&0&0\\0&0&1\end{pmatrix}.
    \end{align*}
    Let the step size $\alpha \in (0,1)$. Then the sequence $\{Z_k\}_{k=0}^\infty$ generated by the Riemannian GD and its limit point are given by
    \begin{align*}\small
        Z_k=\begin{pmatrix}2&0&0\\0&0&0\\0&0&(1-\alpha)^k\end{pmatrix}, \qquad Z_\#:=\lim_{k\to\infty}Z_k=\begin{pmatrix}2&0&0\\0&0&0\\0&0&0\end{pmatrix}.
    \end{align*}
    We see that $Z_\#$ is a spurious critical point. Note that even though $Z_k \in \M_2$ for any $k$, $Z_\# \not\in \M_2$. Instead, $Z_\# \in \M_1$. 
    
    However, from a slightly perturbed initial point
    \begin{align*}
        \small
        Z_0=
        \begin{pmatrix}
            2 & 0 & 0\\
            0 & \epsilon^2 & \epsilon\\
            0 & \epsilon & 1
        \end{pmatrix},
    \end{align*}
    with arbitrarily small $\epsilon>0$, one can always show that $\lim_{k \to\infty} Z_k = X$, i.e., the limit point is not a spurious point.
    
    Figure \ref{fig: Zstararea} is a visualization of the gradient $\|P_{T_{Z}}(Z-X)\|_F$ in the neighborhood of a spurious $Z_\#$. We can see that the gradient is singular near $Z_\#$. There is only one direction in which the sequence converges to $Z_\#$. Along other directions, the Riemannian gradient remains large and the sequence does not converge to $Z_\#$. 
    \begin{figure}[ht]
        \centering
        \includegraphics[width=0.35\textwidth]{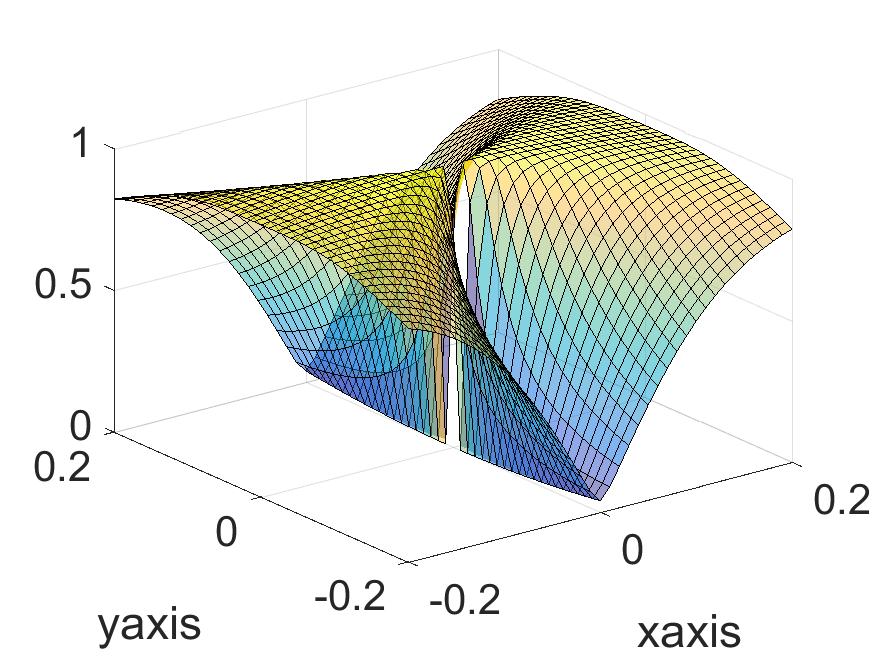}
        \caption{\small Magnitude of the gradient in the neighborhood of a spurious critical point}\label{fig: Zstararea}
    \end{figure}
\end{example}



\subsection{Related work}

\noindent\textbf{Incompleteness of the low-rank matrix manifold.} The fact that $\M_r$ is not a complete set is first reported in \cite{vandereycken2013low} in the context of matrix completion, and later in \cite{kressner2014low} with low Tucker-rank tensor completion. To guarantee that the iterative sequence of the proposed algorithm stays inside a compact subset of $\M_r$, the author of \cite{vandereycken2013low} proposes to add a regularization term to the objective function $f(Z)$:
\begin{align*}
        g(Z) = f(Z) + \mu^2 (\|Z\|_F^2 + \|Z^\dagger\|_F^2),
\end{align*}
where $Z^\dagger$ is the pseudo-inverse of $Z$, and $\mu$ is a parameter. In particular, the term $\mu^2 \|Z^\dagger\|_F^2$ guarantees that $\|Z^\dagger\|_F$ will not go to infinity, i.e. $Z$ will not go to rank lower than $r$. 
        
However, the author also comments that $\mu^2$ can be chosen very small, in fact as small as $10^{-16}$. In numerical experiments, one can simply neglect this term and use the original function $f(Z)$ instead of the regularized function $g(Z)$. In other words, the author observes that the iterative sequence of the vanilla Riemannian gradient descent almost surely avoids the rank-deficient points and stays inside $\M_r$.

\medskip
\zzy{
\noindent\textbf{Apocalypses from a geometric point of view.} 
Concurrent with our paper, the authors of  \cite{levin2021finding} propose a similar concept. They use the term \emph{apocalypse} to describe the event where the sequence of iterative points is in $\M_r$ but the limit point has rank less than $r$. This is exactly what happens in Example \ref{ex: spurious}. They observe that apocalypse occurs when the tangent cone at the limit is not contained in the limit of the tangent cones. A more detailed discussion on the relation between tangent cones and optimality conditions can be found in \cite{li2019low}.

Along this line of research, two remedies have been proposed to fix the apocalypse. The first is a second-order algorithm \cite{levin2021finding}, which uses a smooth lift (essentially the Burer-Monteiro factorization $X = UV^\top$) and the trust-region method. Another is a first-order algorithm proposed in \cite{olikier2022apocalypse}, which uses the numerical rank to perform suitable rank reductions.

We remark that although both approaches could avoid spurious points, they require major modification to the gradient descent algorithm. In contrast, we focus on explaining why gradient descent needs no modification in practice.  
}

\medskip
\noindent\textbf{Asymptotic escape of classical strict saddle points.} Gradient descent with random initialization almost surely escapes strict saddle points and converges to minimizers. Such phenomenon has been well studied in the literature. The seminal works in \cite{lee2016gradient} and \cite{lee2019first} deal with isolated strict saddles in the Euclidean space. Later the result is extended to non-isolated saddles in \cite{panageas2016gradient}, to the Riemannian manifold in \cite{criscitiello2019efficiently} and \cite{sun2019escaping}, and to the strict critical submanifold in \cite{hou2020analysis}. We cite it in its most general form in Theorem \ref{thm:escape3}. In addition to the results with fixed step size, later work also extends the result to a diminishing step size \cite{panageas2019first}.

\blue{
Our central observation is that the spurious critical points in $\overline{\M_r}\backslash\M_r$ are fundamentally different from, but subtly related to, the classical strict saddle points. The spurious critical points have singular local neighborhoods as illustrated in Figure \ref{fig: Zstararea}. Their asymptotic escape behavior cannot be directly explained by Theorem \ref{thm:escape3}. However, using a rescaled gradient flow, we can eliminate the singularity, and apply the saddle escape results to the rescaled system. 
}

\medskip
\noindent\textbf{Implicit regularization in low-rank matrix factorization.} The concept of \emph{implicit regularization} is often used to describe the emergence of favorable structures without explicit regularization terms. In deep matrix factorization and deep neural networks, this describes a tendency towards low-rank solutions and better generalization \cite{arora2019implicit}. In statistical estimation, this could mean a tendency to promote incoherence and accelerate convergence \cite{chi2019nonconvex}\cite{ma2018implicit}. As we have seen, the phenomenon that iterative sequences on the incomplete manifold $\M_r$ stay inside the manifold does not rely on an explicit regularization term $\mu^2\|Z^\dagger\|_F^2$. Thus it can also be seen as a form of implicit regularization.

\medskip
\noindent\textbf{Matrix decomposition and its continuity.} \blue{Our analysis crucially relies on finding a low-rank decomposition that is sufficiently continuous along the whole gradient flow trajectory. The dynamical low-rank approximation (DLRA), first proposed in \cite{koch2007dynamical}, is a decomposition that suits our purpose. In contrast, the singular value decomposition will lose its differentiability whenever singular values coalesce \cite{dieci1999smooth}. A variant called the \emph{analytic SVD} \cite{bunse1991numerical} could fix this issue, but it requires analyticiy of the gradient function, which cannot be satisfied by Riemannian gradients on $\M_r$. We remark that the success of DLRA is still limited to the rank-$(r-1)$ spurious critical points. Extension of the current analysis to general spurious critical points is left for future work.
}


\subsection{Organization of this paper}
The rest of this paper is organized as follows. In Section \ref{sec:prelimiaries} we introduce some preliminary results to set the stage. 
In Section \ref{sec:main} we present and prove the main result of this paper, which is the asymptotic escape of the rank-$(r-1)$ spurious critical points by the gradient flow. Specifically, we introduce the rescaled gradient flow, prove its $C^0$- and $C^1$-extension to the rank-$(r-1)$ spurious critical points, and show that these points are strict saddles under the rescaled flow. In Section \ref{sec:GD} we present the corresponding result for the gradient descent. In Section \ref{sec:numerical}, some numerical experiments are performed to illustrate our theoretical results. Finally, Section \ref{sec:discussion} is devoted to some discussions. 

\section{Preliminaries}
\label{sec:prelimiaries}

\noindent\textbf{Notations.} Unless otherwise specified, upper case letters stand for matrices, lower case letters stand for vectors or scalars, and calligraphic letters stand for manifolds or sets. The field $\F$ can be either $\R$ or $\C$. The low-rank matrix manifold is denoted as $\M_r$. The Hermitian transpose is denoted as $(\cdot)^*$. The set of $n \times n$ Hermitian matrices is denoted as $\mathbb{S}_n$. The Stiefel manifold is $\St(n,r) = \{U \in \F^{n\times r}: \, U^* U = I_r\}$. The orthogonal group is $\text{SO}(n) = \St(n,n)$. The subscript $(\cdot)_{\#}$ is reserved for the spurious critical points. We use $\text{grad}$ and $\text{Hess}$ to denote the Riemannian gradient and Hessian, and $\nabla$ to denote the Euclidean derivative.

\subsection{Manifold setting}
\label{sec:manifold}
Let $\M_r$ ($r\in \mathbb{N}$) denote the fixed rank manifold $\{Z\in\F^{m\times n}: \text{rank}(Z)=r\}$, where $\F = \R$ or $\C$.  Let $\overline{\M_r}$ be its closure. We summarize some basic properties of $\M_r$ below. \blue{A more detailed introduction can be found in \cite{hou2020analysis}.}

\begin{lemma} \leavevmode \label{lemma:manifold_basics}
Let $\M_r=\{Z\in\F^{m\times n}: \rank(Z)=r\}$, where $\F = \R$ or $\C$. Then $\overline{\M_r}=\{Z\in\F^{m\times n}: \rank(Z)\leq r \}$. Furthermore, we have the following:
    \begin{enumerate}[label={(\arabic*)}]
        \item $\M_r$ is dense in $\overline{\M_r}$.
        \item For general $\M_r\subset\F^{m\times n}$ of non-Hermitian matrices, $\M_r$ is connected. If restricted to $m=n$, $\M_r \subset \S_{n}$ Hermitian, then $\M_r$ has $r+1$ disjoint branches and each branch is connected.
        \item The local dimension of $\M_r$ is \begin{align*}
            \text{dim}(\M_r) = 
            \begin{cases}
                (m+n-r)r, & \F = \R, \text{ non-Hermitian;} \\
                (2m+2n-r)r, & \F = \C, \text{ non-Hermitian;}  \\
                \frac{(2m-r+1)r}{2}, &  \F = \R, \text{ Hermitian;}\\
                \frac{(4m-r+1)r}{2}, &  \F = \C, \text{ Hermitian.}\\
            \end{cases}
        \end{align*}
        \item The boundary of $\M_r$ is $\overline{\M_r}\setminus\M_r=\cup_{s=0}^{r-1}\M_s$.
    \end{enumerate}
\end{lemma}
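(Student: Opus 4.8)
The plan is to handle the four items essentially independently, since each is a standard fact about determinantal varieties that can be proved by elementary linear algebra together with the smooth structure of $\M_r$. For the preliminary claim $\overline{\M_r}=\{Z:\rank(Z)\le r\}$, I would argue both inclusions: the set $\{\rank(Z)\le r\}$ is closed (it is the common zero set of all $(r+1)\times(r+1)$ minors, which are continuous polynomials), so it contains $\overline{\M_r}$; conversely, given any $Z$ with $\rank(Z)=s\le r$, I can write $Z=\sum_{i=1}^{s}\sigma_i u_i v_i^*$ from an SVD and perturb it to $Z_\varepsilon=Z+\varepsilon\sum_{i=s+1}^{r}w_i z_i^*$ with $\{u_i,w_j\}$ and $\{v_i,z_j\}$ orthonormal families, so $\rank(Z_\varepsilon)=r$ and $Z_\varepsilon\to Z$. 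This simultaneously gives item (1), density of $\M_r$ in $\overline{\M_r}$, and item (4), since it shows every rank-$s$ matrix with $s<r$ lies in $\overline{\M_r}$, and conversely $\overline{\M_r}\setminus\M_r$ clearly consists exactly of matrices of rank strictly less than $r$, i.e. $\cup_{s=0}^{r-1}\M_s$.

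For item (2), connectedness in the non-Hermitian case follows by writing $Z=U\Sigma V^*$ with $U\in\St(m,r)$, $V\in\St(n,r)$, $\Sigma$ positive diagonal; since the Stiefel manifolds $\St(m,r),\St(n,r)$ are connected (for $r<m$ resp. $r<n$; and $\St(n,n)=\mathrm{O}(n)$ has two components, but one checks the $\C$ case and the relevant real cases are path-connected, or uses that changing signs of columns is absorbed into $\Sigma$) and the set of positive diagonal $\Sigma$ is convex, any two points can be joined by a path. In the Hermitian case $\M_r\subset\S_n$, I would diagonalize $Z=Q\Lambda Q^*$ with $\Lambda$ having exactly $r$ nonzero eigenvalues; the number of positive eigenvalues (the index $p$, with $p+q=r$) is a locally constant integer-valued function on $\M_r$, hence constant on each connected piece, giving the $r+1$ disjoint branches indexed by $p\in\{0,1,\dots,r\}$; connectedness of each branch again follows from connectedness of the relevant unitary/orthogonal group acting on a fixed inertia class of $\Lambda$.

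For item (3), the dimension count, I would exhibit a local parametrization: near a point $Z_0=U_0\Sigma_0 V_0^*$, every nearby rank-$r$ matrix is uniquely $Z = (U_0+U_\perp A)\,M\,(V_0+V_\perp B)^*$-type, or more cleanly use the standard chart in which $\M_r$ locally looks like the graph of a smooth map sending the $r\times r$ block to the remaining blocks; counting free parameters gives $r^2 + r(m-r) + r(n-r) = (m+n-r)r$ real parameters in the real non-Hermitian case, doubled for $\C$, and the Hermitian versions follow by imposing the symmetry constraint on the diagonalization $Q\Lambda Q^*$ (the flag manifold / Grassmannian dimension $\frac{r(2m-r-1)}{2}$ or $\frac{r(2m-r)}{2}$ plus $r$ eigenvalue parameters, or complex analogues). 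I expect the main obstacle to be organizational rather than deep: getting the Hermitian dimension formulas exactly right (keeping careful track of the real versus complex Grassmannian dimensions and the $r$ extra eigenvalue degrees of freedom) and being careful about the $r=m$ edge case in the connectedness argument where the Stiefel manifold degenerates to a disconnected orthogonal group. None of the steps requires more than the implicit function theorem, the SVD, and the spectral theorem, so the proof is a matter of assembling these standard ingredients cleanly; I would likely just cite \cite{hou2020analysis} or \cite{helmke2012optimization} for the routine dimension bookkeeping and give full detail only for the density and boundary statements, which are the ones actually used later.
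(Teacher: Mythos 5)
Your overall route is fine, and in fact the paper never proves Lemma \ref{lemma:manifold_basics} at all: it records these facts and defers to \cite{hou2020analysis} (see also \cite{helmke2012optimization}), so supplying the standard determinantal-variety arguments --- closedness via $(r+1)\times(r+1)$ minors, density and the boundary identity via an SVD-based rank-$(r-s)$ perturbation, connected components via the inertia being locally constant, and the dimension count via local charts --- is exactly the intended content, and your sketch of items (1), (2), (4) is essentially correct. Two points deserve attention. First, your dimension bookkeeping in the complex cases does not reproduce the formulas displayed in the lemma: ``doubling for $\C$'' gives real dimension $2(m+n-r)r$ in the non-Hermitian case and $r(2m-r)$ in the Hermitian case, whereas the lemma states $(2m+2n-r)r$ and $\frac{(4m-r+1)r}{2}$, which are what one gets by substituting $2m,2n$ into the real formulas; these differ from your counts by $r^2$ and $\frac{r(r+1)}{2}$ respectively. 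You should either reconcile your count with the stated formulas or flag the discrepancy explicitly (your count is the standard one for the real dimension of the complex rank-$r$ variety), rather than leave both in the same proof. Second, in the connectedness argument your remark that column sign changes ``are absorbed into $\Sigma$'' cannot work as written, since $\Sigma$ is constrained to be positive; the sign flip must be absorbed by simultaneously flipping the corresponding column of the other factor ($u_i\mapsto -u_i$, $v_i\mapsto -v_i$). With that fix the argument goes through whenever $r<\min(m,n)$ (the setting of the paper); note that the statement is genuinely false for $\F=\R$, $m=n=r$, where $\M_r=\mathrm{GL}_n(\R)$ has two components, so the implicit low-rank assumption should be stated rather than waved at.
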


\begin{lemma}[Tangent space of $\M_r$]
    Let $X\in\M_r$, $X=U\Sigma V^*$. Let $\mathcal{U}=\text{Col}(U)$, $\mathcal{V}=\text{Col}(V)$ be the column spaces of $U$ and $V$ respectively. Then the \emph{tangent space} of $\M_r$ at $X$ is 
    \begin{equation*}
        T_X\M_r=(\mathcal{U}\otimes\mathcal{V})\oplus(\mathcal{U}\otimes\mathcal{V}^\perp)\oplus (\mathcal{U}^\perp\otimes\mathcal{V}).
    \end{equation*}
    \zzy{We use the abbreviation $T_X$ when the manifold $\M_r$ is clear from context.
    The projection operator onto the tangent space can be characterized as
    \begin{equation*}
        P_{T_{X}}(Y)=P_U\cdot Y + Y \cdot P_V- P_U\cdot Y \cdot P_V.
    \end{equation*}    }
\end{lemma}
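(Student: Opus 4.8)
The plan is to identify $T_X\M_r$ as the span of velocities of smooth curves through $X$ that stay in $\M_r$, to use the constant-rank constraint to decide which matrix directions are admissible, and then to read off the projection formula for free from an orthogonal block decomposition of $\F^{m\times n}$.

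First I would fix orthonormal completions $U_\perp\in\St(m,m-r)$ and $V_\perp\in\St(n,n-r)$, so that $(U\;U_\perp)$ and $(V\;V_\perp)$ are unitary. Any $Y\in\F^{m\times n}$ then has a unique block decomposition $Y=UY_{11}V^*+UY_{12}V_\perp^*+U_\perp Y_{21}V^*+U_\perp Y_{22}V_\perp^*$, and the four subspaces $\mathcal{U}\otimes\mathcal{V}$, $\mathcal{U}\otimes\mathcal{V}^\perp$, $\mathcal{U}^\perp\otimes\mathcal{V}$ and $\mathcal{U}^\perp\otimes\mathcal{V}^\perp$ are precisely the ranges of these four blocks. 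Because $U^*U_\perp=0$ and $V^*V_\perp=0$, the four subspaces are mutually orthogonal in the Frobenius inner product and their orthogonal direct sum is all of $\F^{m\times n}$; this bookkeeping is the backbone of both halves of the argument.

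For the inclusion $T_X\M_r\subseteq(\mathcal{U}\otimes\mathcal{V})\oplus(\mathcal{U}\otimes\mathcal{V}^\perp)\oplus(\mathcal{U}^\perp\otimes\mathcal{V})$, I would use that $\M_r$ is an embedded submanifold (Lemma~\ref{lemma:manifold_basics}), so every tangent vector equals $\dot Z(0)$ for some smooth curve $Z(t)\in\M_r$ with $Z(0)=X$. Writing $Z(t)$ in the block coordinates above, $Z_{11}(0)=\Sigma$ is invertible, hence $Z_{11}(t)$ stays invertible for $t$ near $0$, and on that interval the Schur complement identity gives $\rank Z(t)=r+\rank\!\big(Z_{22}(t)-Z_{21}(t)Z_{11}(t)^{-1}Z_{12}(t)\big)$. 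Constancy of the rank forces $Z_{22}(t)\equiv Z_{21}(t)Z_{11}(t)^{-1}Z_{12}(t)$; differentiating at $t=0$ and using $Z_{12}(0)=Z_{21}(0)=0$ annihilates every term, so $\dot Z_{22}(0)=U_\perp^*\dot Z(0)V_\perp=0$, i.e. $\dot Z(0)$ carries no $\mathcal{U}^\perp\otimes\mathcal{V}^\perp$ component. For the reverse inclusion I would exhibit explicit curves: $t\mapsto U(\Sigma+tC)V^*$, $t\mapsto U\Sigma V^*+tU_\perp DV^*$ and $t\mapsto U\Sigma V^*+tUEV_\perp^*$ all have rank exactly $r$ for small $t$ (the first because $\Sigma+tC$ is invertible, the other two because one can factor off a full column-rank $m\times r$ or full row-rank $r\times n$ matrix), and their velocities at $t=0$ are $UCV^*$, $U_\perp DV^*$ and $UEV_\perp^*$, which together span the claimed three-fold sum. (One could instead conclude equality by a dimension count against Lemma~\ref{lemma:manifold_basics}(3).)

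Finally, since the Riemannian metric is the ambient Frobenius metric, $P_{T_X}$ is the orthogonal projection onto $T_X\M_r$, whose orthogonal complement — by the block decomposition above — is exactly $\mathcal{U}^\perp\otimes\mathcal{V}^\perp$, the range of the map $Y\mapsto (I-P_U)Y(I-P_V)$. Hence $P_{T_X}(Y)=Y-(I-P_U)Y(I-P_V)=P_UY+YP_V-P_UYP_V$, which is the stated expression. I do not expect a genuine obstacle: this is a standard computation, and the only points requiring care are checking that $Z_{11}(t)$ stays invertible on a neighbourhood of $0$ so that the Schur identity is legitimate, and verifying that the three model curves stay inside $\M_r$ rather than increasing rank; in the Hermitian restriction one would additionally keep track of the branch structure and use the corresponding case of Lemma~\ref{lemma:manifold_basics}(3).
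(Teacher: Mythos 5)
Your argument is correct. Note that the paper itself states this lemma as a standard fact without proof (deferring to the cited literature), so there is no in-paper argument to compare against; your write-up is the standard one: the four-block orthogonal decomposition induced by $(U\;U_\perp)$ and $(V\;V_\perp)$, the Schur-complement identity $\rank Z(t)=r+\rank\big(Z_{22}-Z_{21}Z_{11}^{-1}Z_{12}\big)$ on the interval where $Z_{11}(t)$ stays invertible to kill the $\mathcal{U}^\perp\otimes\mathcal{V}^\perp$ component, explicit curves (or the dimension count against Lemma~\ref{lemma:manifold_basics}) for the reverse inclusion, and the observation that the Frobenius-orthogonal complement of the three-fold sum is the range of $Y\mapsto(I-P_U)Y(I-P_V)$, which yields $P_{T_X}(Y)=P_UY+YP_V-P_UYP_V$. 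All the delicate points you flag (local invertibility of $Z_{11}(t)$, rank preservation of the model curves) are handled correctly, so I see no gap.
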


\begin{definition}[Retraction]
    Let $X\in \M_r$ and $\xi \in T_X$. We define the natural retraction on $\M_r$ as
    \begin{align*}
        {R}(X+\xi) = \argmin_{\blue{Z \in \overline{\M_r}}} \|X+\xi-Z\|_F .
    \end{align*}
\end{definition}

\subsection{Existence of spurious critical points}
\label{sec:spurious}
Our study of spurious critical points is motivated by the study in \cite{hou2020fast}, which reveals a previously unknown property of the fixed rank matrix manifold $\M_r$. Namely, when minimizing the least squares loss function $f(Z) = \frac{1}{2}\|Z-X\|_F^2$ on $\M_r$ with $\text{rank} (X) = r$, there exist some points with rank smaller than $r$ which could also serve as the limit points of minimizing sequences. \zzy{This phenomenon was first reported in \cite{helmke1995critical} and later attracted more research interest. We summarize it in the following lemma. }

\begin{lemma}[{\cite[Lemma 3.8]{hou2020fast}}]
\label{lem: spurious_fp}
Consider using the Riemannian gradient descent algorithm
\begin{align*}
    Z_{k+1}={R}\left(Z_k - \alpha_k P_{T_{Z_k}}(\grad f(Z_k))\right)
\end{align*}
to minimize the least squares objective function
\begin{align*}
    f(Z)=\frac{1}{2}\|Z-X\|_F^2.
\end{align*}
Let the step size be $\alpha_k \equiv \alpha$. Assume $X = U_x D_x V_x^*$ is a singular value decomposition of  $X$, where $D \in \R^{r\times r}$ is a non-singular diagonal matrix and $U\in \F^{m \times r}$, $V\in\F^{n\times r}$. Then, 
\begin{enumerate}[label=\arabic*)]
\item  There are two types of fixed points: one is the ground truth $Z = X$, and the other is the set 
\begin{align*}
    \mathcal{S}_{\#} := &\big\{Z_\#: \, Z_\# = U_{1}D_{1} {V_{1}}^*, \text{ where } U_{1}, D_{1},  {V_{1}} \text{ are submatrices of }U_x, D_x, V_x, \text{ satisfying}\\
    & U_x = \left(U_{1}, U_{2}\right), \, D_x = \text{diag} \{D_{1}, D_{2}\}, \, V_x = \left(V_{1}, V_{2}\right) \text{ respectively, and } Z_\#\neq X\big\}.
\end{align*}
\item Specifically, if $X$ has distinct singular values, i.e. all the eigenvalues of $X$ have algebraic multiplicity equal to 1, then $\mathcal{S}_{\#}$ has cardinality $|\mathcal{S}_{\#}|=2^r-1$. Assume that $X=\sum_{i=1}^r d_i u_i v_i^*$, then $\mathcal{S}_{\#}=\{Z_\#=\sum_{i=1}^r d_i\eta_i u_i v_i^*, \text{ where }\eta\in\{0,1\}^r \text{ and } \eta\neq (1,\,1,\,\ldots,1)^*\}$.
\end{enumerate}
\end{lemma}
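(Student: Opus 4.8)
The plan is to characterize the fixed points of the Riemannian gradient descent iteration by working directly in the eigenbasis of $X$. First I would write out the fixed-point equation: $Z$ is a fixed point iff $R\big(Z - \alpha P_{T_Z}(Z - X)\big) = Z$. Since $\alpha \in (0,1)$ and the retraction is the best rank-$\le r$ approximation, for $Z \in \M_r$ this is equivalent to $P_{T_Z}(Z - X) = 0$, i.e. the Riemannian gradient of $f$ vanishes at $Z$. Using the tangent-space projection formula $P_{T_Z}(Y) = P_U Y + Y P_V - P_U Y P_V$ with $Z = U\Sigma V^*$, the condition $P_{T_Z}(Z-X)=0$ becomes $P_U(Z-X) + (Z-X)P_V - P_U(Z-X)P_V = 0$. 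Since $Z = P_U Z = Z P_V$, this simplifies to $Z = P_U X P_V$, together with the two orthogonality conditions $P_U^\perp X P_V = 0$ and $P_U X P_V^\perp = 0$ (splitting the vanishing-gradient equation into its components in $\mathcal{U}\otimes\mathcal{V}$, $\mathcal{U}^\perp\otimes\mathcal{V}$, $\mathcal{U}\otimes\mathcal{V}^\perp$).

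The second step is to show these conditions force $\mathcal{U}$ and $\mathcal{V}$ to be spanned by left/right singular vectors of $X$. From $P_U^\perp X P_V = 0$ we get $\mathrm{Col}(XP_V) \subseteq \mathcal{U}$, and from $P_U X P_V^\perp = 0$ we get $\mathrm{Col}(X^* P_U) \subseteq \mathcal{V}$; combining these, $\mathcal{U}$ is invariant under $XX^*$ and $\mathcal{V}$ under $X^*X$, and the two subspaces are matched under $X$. Hence $\mathcal{U}$ is spanned by a subset of the left singular vectors $\{u_i\}$ and $\mathcal{V}$ by the corresponding right singular vectors $\{v_i\}$ — here one must be slightly careful when $X$ has repeated singular values, in which case $\mathcal{U}$ can be any subspace of the corresponding eigenspace, but $Z = P_U X P_V$ then still takes the stated block form $U_1 D_1 V_1^*$ with $(U_1, D_1, V_1)$ extracted from the SVD of $X$ (after an orthogonal change of basis within each eigenspace). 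Plugging back, $Z = P_U X P_V = \sum_{i \in S} d_i u_i v_i^*$ for the index set $S$ of selected singular directions; $|S| = r$ gives $Z = X$, and $|S| < r$ (but $Z \ne 0$ possible, including $S = \emptyset$ if one allows $\overline{\M_r}$) gives the spurious set $\mathcal{S}_\#$. This establishes part 1).

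For part 2), when $X$ has distinct singular values each eigenspace is one-dimensional, so the subset $S \subseteq \{1,\dots,r\}$ is uniquely determined and the choice of $U_1$ is rigid up to phase; thus the fixed points are in bijection with subsets $S \subsetneq \{1,\dots,r\}$, equivalently with $\eta \in \{0,1\}^r \setminus \{(1,\dots,1)\}$, giving $Z_\# = \sum_{i=1}^r d_i \eta_i u_i v_i^*$ and cardinality $2^r - 1$. One should note that the all-zero $\eta$ gives $Z_\# = 0 \in \M_0$, which is included in the count, so the $2^r - 1$ figure counts all proper subsets including the empty one; this matches the claim since only $\eta = (1,\dots,1)$ is excluded.

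The main obstacle I anticipate is the repeated-singular-value case in part 1): when a singular value has multiplicity greater than one, the subspace $\mathcal{U}$ need not align with any fixed choice of singular vectors, and one must argue that $Z = P_U X P_V$ is nonetheless expressible with submatrices $U_1, D_1, V_1$ of *some* SVD $U_x D_x V_x^*$ of $X$ — i.e. that we may rotate the degenerate singular vectors to absorb $\mathcal{U}$. This requires invoking the freedom in the SVD and checking that the block-diagonal structure $D_x = \mathrm{diag}\{D_1, D_2\}$ is preserved, which is where the "$U_1, D_1, V_1$ are submatrices" phrasing (rather than "$Z_\#$ is a truncated sum") earns its keep. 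Everything else — the algebra reducing the gradient to $Z = P_U X P_V$ plus two orthogonality conditions, and the combinatorial count in the distinct case — is routine once the eigenbasis viewpoint is set up.
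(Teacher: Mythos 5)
The paper does not prove this lemma itself --- it is imported verbatim from \cite[Lemma 3.8]{hou2020fast} --- so there is no internal proof to compare against; judged on its own terms, your proposal follows the natural (and, as far as the cited reference goes, the standard) route of characterizing fixed points via the first-order condition $P_{T_Z}(Z-X)=0$, and it is essentially sound. Two points deserve to be made explicit rather than asserted. First, the equivalence ``$R\lb Z-\alpha P_{T_Z}(Z-X)\rb = Z \iff P_{T_Z}(Z-X)=0$'' needs a one-line justification: if $G:=P_{T_Z}(Z-X)\neq 0$, then for $Y=Z-\alpha G$ the function $W\mapsto \|Y-W\|_F^2$ on $\overline{\M_r}$ has strictly negative directional derivative $-2\alpha\|G\|_F^2$ at $W=Z$ along $-G\in T_Z$, so $Z$ cannot be the metric projection of $Y$; this works for every $\alpha>0$, not just $\alpha\in(0,1)$. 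Second, the lemma's fixed points include rank-deficient matrices $Z_\#\in\overline{\M_r}\setminus\M_r$, for which $P_{T_{Z_\#}}$ must be read as the projection onto the tangent space of $\M_s$ at $Z_\#$ (equivalently, the same formula $P_UY+YP_V-P_UYP_V$ with $U,V$ of size $s$); your algebra covers this case since it never uses $\mathrm{rank}(Z)=r$, but you should state it, since otherwise ``fixed point'' is undefined off $\M_r$. Your handling of the repeated-singular-value degeneracy (absorbing the invariant subspaces of $XX^*$ and $X^*X$ into a suitably rotated SVD of $X$) is exactly the right fix for the ``submatrices of $U_x,D_x,V_x$'' phrasing, and the counting in part 2), including the zero matrix as the $\eta=0$ case, is correct.
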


Motivated by the above lemma, we introduce the formal definition of the spurious critical points.

\begin{definition}[Spurious critical points]
\label{def:spurious}
    Assume that $X = U_xD_xV_x^*$ is a singular value decomposition of $X$. Then the set of \textbf{spurious critical points} with respect to $f(Z) =\frac{1}{2}\|Z-X\|_F^2$ on $\M_r$ is $\mathcal{S}_{\#} = \cup_{s=0}^{r-1} \mathcal{S}_s$,
    where each $\mathcal{S}_s$ can be characterized as
    \begin{align*}
        \mathcal{S}_s := &\big\{Z_\#: \, Z_\# \in \mathcal{S}_{\#}, \text{ rank}(Z_\#) =s \} \\
        = &\big\{Z_\#: \, Z_\# = U_{1}D_{1} {V_{1}}^*, \,\, U_{1} \in \F^{m\times s},\, V_{1} \in \F^{n\times s}, \,D_{1}\in\F^{s\times s}\}.
    \end{align*}
    Here $U = (U_1, U_2)$, $U_1 \in \F^{m\times s}$, $U_2 \in \F^{m\times (r-s)}$ is a block decomposition of $U$; similarly for $V$ and $D$.
\end{definition}

A simple example of a minimizing sequence converging to a spurious critical point instead of the ground truth $X$ has been given in Example \ref{ex: spurious}. Observing such phenomenon, one naturally asks how common this happens in practice. Interestingly, when the initial point $Z_0$ is sampled on $\M_r$ according to some general random sampling scheme, we observe that convergence to spurious critical points almost never happens. The goal of this work is thus to investigate the mechanism behind it. 

\subsection{Dynamical low-rank approximation}
\label{sec:DLRA}
The dynamical low-rank approximation was first proposed in \cite{koch2007dynamical} and soon gained popularity as a discretization method for the computation of low-rank evolution systems. It gives a neat description of the column space and core matrix of the low-rank matrix along the evolution. The decomposition enjoys better smoothness than SVD and other classical decompositions. \zzy{While a smooth version of SVD is only available when the gradient function is analytic, the dynamical low-rank approximation always preserves the smoothness of the gradient function. Thus,} it well suits our purpose.

\begin{lemma}[Dynamical low-rank approximation\footnote{\zzy{Strictly speaking, our ODE system is not an ``approximation'' but an exact characterization of the gradient flow. We stick to this terminology for ease of reference.}}, \cite{koch2007dynamical}]
\label{lem: koch2007dynamical}
Consider the gradient flow of a function $f(Z): \, \M_r \to \R$.
Assume $Z=USV^*$, where $U, \, V \in \F^{n \times r}$ are orthonormal, $S \in \R^{r \times r}$ \zzy{nonsingular}. \zzy{Let $M:= -\grad f(Z) = -P_{T_{Z}}(\nabla f(Z))$ denote the negative Riemannian gradient of $f(Z)$ on $\M_r$}. Impose the constraints $\dot{U}^* U = \dot{V}^* V =0$. Then the gradient flow of $f(Z)$ can be described by the following ODE system:
\begin{align}\label{eq:DLRA_original}
    \begin{cases}
        \dot{U}=P_U^{\perp}MVS^{-1},\\
        \dot{V}=P_V^{\perp}M^*U(S^{-1})^*,\\
        \dot{S}=U^*MV.
    \end{cases}
\end{align}
Here, $P_U^{\perp}=I-UU^*$ and $P_V^{\perp}=I-VV^*$.
\end{lemma}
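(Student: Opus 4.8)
The plan is to derive the ODE system by differentiating the decomposition $Z = USV^*$ along the gradient flow $\dot Z = M := -\grad f(Z)$, and then use the gauge constraints $\dot U^* U = \dot V^* V = 0$ to uniquely solve for $\dot U$, $\dot V$, $\dot S$ in terms of $M$. The key structural fact I would exploit is that the Riemannian gradient $M$ lies in $T_Z\M_r$, so by the tangent-space lemma it decomposes as $M = P_U M + M P_V - P_U M P_V$; equivalently, with $\mathcal U = \mathrm{Col}(U)$, $\mathcal V = \mathrm{Col}(V)$, any tangent vector has no component in $\mathcal U^\perp \otimes \mathcal V^\perp$. This is what makes the three equations close.

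First I would differentiate $Z = USV^*$ to get $\dot Z = \dot U S V^* + U \dot S V^* + U S \dot V^*$, and note $\dot Z = M$. Next I would project this identity onto the three relevant subspaces. Left-multiplying by $U^*$ and right-multiplying by $V$: using $\dot U^* U = 0$ (hence $U^* \dot U = -\dot U^* U = 0$ after taking adjoints, noting $U^*\dot U$ is skew — actually one gets $U^*\dot U = 0$ directly from the constraint together with the adjoint relation) and $V^* \dot V = 0$ (so $\dot V^* V = 0$ gives $V^*\dot V = 0$ likewise), the cross terms vanish and we obtain $U^* M V = \dot S$, since $U^* U = V^* V = I_r$. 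That gives the third equation. Then, left-multiplying $\dot Z = M$ by $P_U^\perp = I - UU^*$ and right-multiplying by $V$: the term $U\dot S V^* $ is killed by $P_U^\perp$, and $US\dot V^* V = 0$ by the constraint, leaving $P_U^\perp M V = P_U^\perp \dot U S V^* V = \dot U S$ (using $P_U^\perp \dot U = \dot U$ because $\mathrm{Col}(\dot U) \perp \mathcal U$, which follows from $U^*\dot U = 0$). Since $S$ is nonsingular, $\dot U = P_U^\perp M V S^{-1}$. The symmetric manipulation — multiplying by $U^*$ on the left and $P_V^\perp$ on the right — yields $\dot V^* = (S^{-1})^* U^* M P_V^\perp$, i.e. $\dot V = P_V^\perp M^* U (S^{-1})^*$ after taking adjoints. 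One should also check consistency: the remaining projection $P_U^\perp M P_V^\perp$ must vanish, and it does precisely because $M \in T_Z\M_r$ has no $\mathcal U^\perp \otimes \mathcal V^\perp$ component.

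The main subtlety — and the place I would be most careful — is the bookkeeping around the gauge constraints and the adjoint/skew-symmetry relations: from $\dot U^* U = 0$ one needs $U^* \dot U = 0$ as well, which requires observing that differentiating $U^* U = I_r$ gives $\dot U^* U + U^* \dot U = 0$, so $U^* \dot U = -\dot U^* U = 0$ is automatic. The same for $V$. This is what guarantees $P_U^\perp \dot U = \dot U$ and lets all the cross terms collapse cleanly. The nonsingularity of $S$ is used in exactly two places (inverting to isolate $\dot U$ and $\dot V$) and is the hypothesis that will eventually fail at the spurious critical points — which is the whole point of the rescaling introduced later — but within this lemma it is simply assumed. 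Everything else is routine matrix algebra with no analytic difficulty; there is no obstacle of substance here, only the need to project onto the correct three subspaces in the correct order.
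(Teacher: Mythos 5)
Your derivation is correct, and it is essentially the standard argument behind this lemma: the paper itself gives no proof but cites Koch--Lubich \cite{koch2007dynamical}, whose derivation is exactly this one --- differentiate $Z=USV^*$, use the gauge conditions $\dot U^*U=\dot V^*V=0$ (equivalently $U^*\dot U=V^*\dot V=0$) to make the cross terms collapse under the projections $U^*(\cdot)V$, $P_U^\perp(\cdot)V$, $U^*(\cdot)P_V^\perp$, and invert the nonsingular $S$, with consistency guaranteed because the tangent vector $M$ has no $\mathcal U^\perp\otimes\mathcal V^\perp$ component. No gaps; your bookkeeping of the adjoint relations and the two uses of nonsingularity of $S$ matches what the cited proof requires.
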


The dynamical low-rank approximation introduces a  multiple-to-one mapping as a parameterization of $\M_r$.
Let $\St(n,r)$ denote the $n$ by $r$ Stiefel manifold, i.e. $\St(n,r) = \{U \in \F^{n \times r}: \, U^*U = I_r\}$. Then we have \zzy{that for $S$ nonsingular},
\begin{align*}
    \St(m,r) \oplus \St(n,r) \oplus \F^{r\times r} \quad &\to \quad \M_r \\
    (U,V,S)\quad &\mapsto \quad Z = USV^*.
\end{align*}
Since $S$ is not required to be diagonal, there are infinitely many tuples of $(U,V,S)$ corresponding to the same $Z$, and these tuples are not equivalent under permutations. However, after we impose the constraints $\dot{U}^* U = \dot{V}^* V =0$, from any initial tuple $(U_0,V_0,S_0)$ there is a \emph{unique} path in $\St(m,r) \oplus \St(n,r) \oplus \F^{r\times r}$ that describes the gradient flow of $f(Z)$ \zzy{using a similar argument as \cite{koch2007dynamical}}. In other words, as long as the initial decomposition $Z_0 = U_0S_0V_0^*$ is given, the decomposition that satisfies the dynamical low-rank relation is uniquely determined along the whole trajectory. 

The advantage of the dynamical low-rank approximation (\ref{eq:DLRA_original}) lies in the fact that the ODE system \zzy{generically stays continuous. This is especially remarkable for the singular vector matrices $U$ and $V$. As a comparison, SVD might enjoy uniqueness to some extent, but it is known to lose its differentiability when singular values coalesce \cite{dieci1999smooth}, and that could only be fixed with the unrealistic assumption of analyticity \cite{bunse1991numerical}. 
}

More specifically, in the SPSD setting, for the least squares function $f(Z) = \frac{1}{2}\|Z-X\|_F^2$, we have the following result.

\zzy{
\begin{lemma}[Existence of gradient flow]
\label{lemma:DLRA_existence}
    Consider the manifold of symmetric positive semi-definite (SPSD) matrices, i.e., $m=n$, and $\M_r = \left\{ Z \in \S_n, \, Z \succcurlyeq 0, \, \text{rank}(Z)=r \right\}$. Consider the least squares objective function $f(Z) = \frac{1}{2}\|Z-X\|_F^2$. Let $M:= -\grad f(Z)$ denote its negative Riemannian gradient. Let $Z_0 \in \M_r$ be the initialization of the gradient flow at time $T=0$, and $U_0 \in \St(n,r)$, $S_0 \in \S_r$ nonsingular such that $Z_0 = U_0 S_0 U_0^\top$. Then there exists a unique gradient flow satisfying
    \begin{align*}
        \begin{cases}
            \dot{U}=P_U^{\perp} M US^{-1},\\
            \dot{S}=U^*M U.
        \end{cases}
    \end{align*}
    for all $0 \le T < \infty$.
\end{lemma}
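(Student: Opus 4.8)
The plan is to exploit the very special structure of the least-squares objective to first collapse the raw DLRA vector field into a form with no hidden cancellations, and then to run standard Cauchy--Lipschitz theory together with one \emph{a priori} estimate that prevents the flow from reaching a rank-deficient matrix in finite time. \textbf{Step 1 (reduction of the vector field).} Since $\nabla f(Z)=Z-X$, the negative Riemannian gradient is $M=-\grad f(Z)=P_{T_Z}(X-Z)$, and on the SPSD manifold the tangent projection is $P_{T_Z}(Y)=P_UY+YP_U-P_UYP_U$ with $P_U=UU^\top$. Using $U^\top U=I_r$ one computes directly $MU=(X-Z)U=XU-US$ and $U^\top MU=U^\top XU-S$, so the ODE system of Lemma~\ref{lemma:DLRA_existence} is equivalent to
\begin{align*}
    \dot U = P_U^\perp\, X U S^{-1}, \qquad \dot S = U^\top X U - S .
\end{align*}
The right-hand side is real-analytic --- in particular $C^\infty$ and locally Lipschitz --- on the open set $\mathcal D=\{(U,S)\in\R^{n\times r}\times\S_r:\ \det S\neq 0\}$, and from $Z_0=U_0S_0U_0^\top\succcurlyeq 0$ with $\rank(Z_0)=r$ we get $S_0=U_0^\top Z_0 U_0\succ 0$.

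\textbf{Step 2 (local existence, uniqueness, invariance).} By Cauchy--Lipschitz there is a unique maximal solution $(U(T),S(T))$ on some interval $[0,T^\ast)$ with initial value $(U_0,S_0)$; this already settles uniqueness. The vector field is tangent to the constraint set at its points: $P_U^\perp U=0$ whenever $U^\top U=I_r$ forces $\tfrac{d}{dT}(U^\top U)=0$, so $U(T)\in\St(n,r)$, while $\dot S$ is symmetric, so $S(T)\in\S_r$. Hence the trajectory stays in $\St(n,r)\times(\S_r\cap\{\det\neq0\})\subset\mathcal D$, and by Lemma~\ref{lem: koch2007dynamical} the curve $Z(T)=U(T)S(T)U(T)^\top$ is exactly the gradient flow of $f$ on $\M_r$; it therefore remains to show $T^\ast=\infty$.

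\textbf{Step 3 (global existence; the crux).} On $[0,T^\ast)$ the $S$-equation is a \emph{linear} inhomogeneous ODE driven by the continuous matrix $U(T)^\top XU(T)$, which is positive semidefinite because $X\succcurlyeq0$. Duhamel's formula then gives
\begin{align*}
    S(T)=e^{-T}S_0+\int_0^T e^{-(T-s)}\,U(s)^\top X U(s)\,ds\ \succcurlyeq\ e^{-T}S_0\ \succcurlyeq\ e^{-T}\sigma_{\min}(S_0)\,I_r ,
\end{align*}
and likewise $\|S(T)\|\le\|S_0\|+\|X\|$. Since $\St(n,r)$ is compact, if $T^\ast<\infty$ the whole trajectory would lie in the compact subset $\St(n,r)\times\{S\in\S_r:\ \sigma_{\min}(S)\ge e^{-T^\ast}\sigma_{\min}(S_0),\ \|S\|\le\|S_0\|+\|X\|\}$ of $\mathcal D$, contradicting the standard continuation theorem (a maximal solution with finite maximal time must leave every compact subset of the domain). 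Hence $T^\ast=\infty$; moreover $S(T)\succ0$ for every finite $T$, so $Z(T)\in\M_r$ along the whole flow.

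\textbf{Main obstacle.} The only genuine difficulty, and the reason the statement is not automatic, is the $S^{-1}$ factor, which makes the DLRA vector field truly singular precisely where $Z$ would drop rank; \emph{a priori} the flow could hit $\overline{\M_r}\setminus\M_r$ in finite time. The point that makes everything go through is the observation in Step 3 that, for the least-squares objective on the SPSD manifold, the $\dot S$-equation decouples into a linear ODE whose solution is pinned from below by $e^{-T}S_0\succ0$; this forbids finite-time degeneracy while still allowing $S(T)$ to approach a singular limit as $T\to\infty$ --- which is exactly the spurious-critical-point scenario analyzed in the remainder of the paper.
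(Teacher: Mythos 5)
Your proposal is correct, and its crucial step takes a genuinely different route from the paper's. Both arguments first reduce the system to $\dot U = P_U^\perp X U S^{-1}$, $\dot S = -S + U^\top X U$ and then hinge on showing that $\sigma_{\min}(S)$ cannot vanish in finite time, using $X \succcurlyeq 0$. The paper does this by differentiating the (possibly multiple) smallest eigenvalue along the flow, via a trace formula in the spirit of Magnus, obtaining $\frac{\rd}{\rd t}\sigma_{\min}(S) \ge -\sigma_{\min}(S)$ and concluding by Gr\"onwall. You instead exploit the fact that, once $U(\cdot)$ is regarded as a given continuous coefficient, the $S$-equation is linear inhomogeneous, and Duhamel's formula with the Loewner order gives the pointwise bound $S(T) \succcurlyeq e^{-T}S_0 \succcurlyeq e^{-T}\sigma_{\min}(S_0) I_r$ together with $\|S(T)\| \le \|S_0\| + \|X\|$; combined with compactness of $\St(n,r)$ and the standard escape-from-compacts continuation theorem, this forces the maximal existence time to be infinite. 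Your route buys two things: it sidesteps the delicacy of differentiating an eigenvalue of possibly higher multiplicity (which the paper handles with the trace-of-projection device), and it makes explicit the local existence, uniqueness, and invariance of the constraints $U^\top U = I_r$, $S \in \S_r$, which the paper leaves implicit ("it suffices to show no finite-time blow-up"). The paper's eigenvalue argument, on the other hand, is the one that generalizes directly to the rescaled system (DLRA*) later in Section 3.3, where the equation for $S$ is no longer linear in $S$ because of the $\sigma_{\min}(S)$ factor, so Duhamel is unavailable there; your linearity trick is specific to the unrescaled least-squares flow. One cosmetic point: your invariance claim for $U^\top U = I_r$ is stated tersely; to be airtight, note that $G := U^\top U - I_r$ satisfies a linear ODE with $G(0)=0$ (since $P_U^\perp U = -UG$), hence $G \equiv 0$ by uniqueness.
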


\begin{proof}
    The Riemannian gradient of the objective function $f(Z) = \frac{1}{2}\|Z-X\|_F^2$ is $P_{T_Z}(Z-X)$. Plugging in $M = -P_{T_Z}(Z-X)$, and noticing that $P_U^\perp Z = 0$ and $P_U U = U$, we get the following ODE system:
    \begin{align*}
            \begin{cases}
                \dot{U} = P_U^\perp X U S^{-1},  \\
                \dot{S} = -S + U^* X U.
            \end{cases}
    \end{align*}
    It suffices to show that the ODE system does not blow up in finite time. We prove that for any $T_1>0$, $\sigma_\text{min}(S)$ is bounded from below for all $T \in [0, T_1]$, where $\sigma_\text{min}(S)$ is the smallest eigenvalue of $S \in S_r$.
    
    At $T=0$, we have $\sigma_\text{min}(S)>0$. At a given time $T$, let the multiplicity of $\sigma_\text{min}(S)$ be $j$, i.e., $\sigma_{r-j}(S) > \sigma_{r-j+1}(S) = \ldots = \sigma_{r}(S)$. Denote $P_{U_{(r-j+1) \text{ to }r}}$ as the projection onto the corresponding eigen subspace. Using a similar argument as in \cite{magnus1985differentiating}, one can show that
    \begin{align*}
        \frac{\rd}{\rd t} \left(\sum_{l=r-j+1}^r \sigma_{l}(S) \right) = \text{tr}\left( P_{U_{(r-j+1) \text{ to }r}} \cdot \frac{\rd}{\rd t} S\right).
    \end{align*}
    In particular, when $\sigma_r(S)$ is a simple eigenvalue and $u_r$ is its eigenvector, this reduces to the classical result
    \begin{align*}
        \frac{\rd}{\rd t} \sigma_r(S) = u_r^* \left(\frac{\rd}{\rd t} S\right) u_r.
    \end{align*}
    Note that $\frac{\rd}{\rd t} S = -S + U^* X U$ and $X$ is positive semi-definite. Thus $\frac{\rd}{\rd t} S \succcurlyeq -S$, and we have
    \begin{align*}
        \frac{\rd}{\rd t} \left(\sum_{l=r-j+1}^r \sigma_{l}(S) \right) \ge  \text{tr}\left( P_{U_{(r-j+1) \text{ to }r}} \cdot (-S)\right) = -\sum_{l=r-j+1}^r \sigma_{l}(S).
    \end{align*}
    In particular, when $\sigma_r(S)$ is a simple eigenvalue, one has
    \begin{align*}
        \frac{\rd}{\rd t} \sigma_r(S) \ge -\sigma_r(S).
    \end{align*}
    By Gr\"onwall's inequality, $\sigma_{\text{min}}(S)$ decays no faster than exponentially fast. Thus it is bounded from below in any finite time interval.
\end{proof}
}

Under the above parameterization, any isolated critical point $Z_\#$ on $\M_r$ corresponds to a critical \zzy{set} on $\St(m,r) \oplus \St(n,r) \oplus \F^{r\times r}$ consisting of infinitely many points, denoted as $\N_{Z_\#}$:
\begin{align*}
    \N_{Z_\#} := \{(U_\#,V_\#,S_\#): \, U_\#S_\#V_\#^* = Z_\#\}.
\end{align*}
Some constraints need to be imposed on the above decomposition to make it a valid parameterization for a spurious critical point. We will discuss it in more detail in Section \ref{sec:parameterization}.

We do not distinguish between the parameterized gradient flow on $\St(m,r) \oplus \St(n,r) \oplus \F^{r\times r}$ and the original gradient flow on $\M_r$ when there is no confusion. To prove the asymptotic escape of spurious critical points on $\M_r$, then, is to prove the asymptotic escape of spurious critical submanifolds on $\St(m,r) \oplus \St(n,r) \oplus \F^{r\times r}$.

\subsection{Asymptotic escape of classical strict saddles}
\label{sec:asymptotic}
In this subsection, we introduce the classical results for the asymptotic escape of strict saddle points by gradient descent. Note that we only intend to include the results for the vanilla gradient descent. We do not cover the perturbed or stochastic gradient descent, as they are less relevant to our problem.

We emphasize that the spurious critical points in Definition \ref{def:spurious}, the subject of this study, are \emph{not} classical strict saddle points. It is because the Riemannian Hessian at the spurious critical points is {singular}, as will be revealed in subsequent sections. Therefore, the theorems and lemmas in this subsection are \emph{not} directly applicable to the spurious critical points. Nevertheless, these theorems and lemmas will be used in an indirect manner, on a rescaled system where the singularity is removed. 

The first theorem is a result on the stable and unstable manifolds of the gradient flow at a hyperbolic point.

\begin{theorem}[{\cite[The \emph{Center Manifold Theorem}]{perko2013differential}}]
\label{thm:escapeGF}
    Let $f \in C^r(E)$ where $E$ is an open subset of $\R^n$ containing the origin and $r\ge 1$. Let $x(t) = \phi_t(x_0)$ be the gradient flow of the system $\dot{x} = f(x)$. Suppose that $f(0)=0$ and that $Df(0)$ has $k$ eigenvalues with negative real part, $j$ eigenvalues with positive real part, and $m = n-k-j$ eigenvalues with zero real part. Then there exist
    \begin{enumerate}[label = (\arabic*)]
        \item A $k$-dimensional stable manifold $W^s(0)$ of class $C^r$ tangent to the stable subspace $E^s$ at 0, where for all $x_0 \in W^s(0)$,
        \begin{align*}
            \lim_{t\to +\infty}\phi_t(x_0) = 0;
        \end{align*}
        \item A $j$-dimensional unstable manifold $W^u(0)$ of class $C^r$ tangent to the unstable subspace $E^u$ at 0, where for all $x_0 \in W^u(0)$,
        \begin{align*}
            \lim_{t\to -\infty}\phi_t(x_0) = 0;
        \end{align*}
        \item And an $m$-dimensional center manifold $W^c(0)$ of class $C^r$ tangent to the center subspace $E^c$ at 0.
    \end{enumerate}
    Furthermore, $W^c(0)$, $W^s(0)$ and $W^u(0)$ are invariant under the gradient flow.
\end{theorem}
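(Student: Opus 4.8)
The plan is to build the three invariant manifolds one at a time by the Lyapunov--Perron integral-equation method, after normalizing the linear part. First I would apply a linear change of coordinates that block-diagonalizes $A := Df(0)$ as $\mathrm{diag}(A_s,A_u,A_c)$, where the spectra of $A_s$, $A_u$, $A_c$ lie strictly in the open left half-plane, strictly in the open right half-plane, and on the imaginary axis respectively, and rewrite the system as $\dot{x} = Ax + F(x)$ with $F \in C^r$, $F(0)=0$, $DF(0)=0$. Because every fixed-point argument below requires the nonlinearity to have a small Lipschitz constant, I would replace $F$ by $\tilde{F}(x) := \chi(x/\delta)\,F(x)$ for a smooth cut-off $\chi$ that equals $1$ near the origin and vanishes outside the unit ball; then $\tilde{F}$ is globally Lipschitz with constant $\varepsilon(\delta)\to 0$ as $\delta\to 0$, while $\tilde{F}\equiv F$ on $B_\delta(0)$, so any invariant manifold produced for the modified flow restricts to a local invariant manifold for the original one, and the tangency and convergence statements are local anyway.

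For the stable manifold, split the coordinates as $x=(x_-,x_+)$ along $E^s \oplus (E^u\oplus E^c)$, so that for suitable constants $C\ge 1$, $\lambda>0$ and arbitrarily small $\mu>0$ one has $\|e^{A_s t}\|\le Ce^{-\lambda t}$ for $t\ge 0$ and $\|e^{\mathrm{diag}(A_u,A_c)\,t}\|\le Ce^{\mu|t|}$ for $t\le 0$. For each $a\in E^s$ I would seek a solution of
\begin{align*}
    u(t) = \big(e^{A_s t}a,\, 0\big) + \int_0^t \big(e^{A_s(t-\tau)}\,\tilde{F}_-(u(\tau)),\, 0\big)\,\rd\tau - \int_t^{+\infty}\big(0,\, e^{\mathrm{diag}(A_u,A_c)(t-\tau)}\,\tilde{F}_+(u(\tau))\big)\,\rd\tau
\end{align*}
in the Banach space of continuous curves $u:[0,\infty)\to\R^n$ with finite norm $\sup_{t\ge 0} e^{\eta t}\|u(t)\|$, for a fixed $\eta\in(\mu,\lambda)$. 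When $\varepsilon(\delta)$ is small enough the right-hand side is a contraction, so the solution $u(\cdot,a)$ exists and is unique; putting $\psi(a):= u(0,a)_+$ gives $W^s(0)=\{(a,\psi(a)):a\in E^s,\ \|a\|<\rho\}$, and a direct check shows that $\phi_t(x_0)\to 0$ (exponentially) as $t\to+\infty$ precisely when $x_0$ lies on this graph. Tangency of $W^s(0)$ to $E^s$ at the origin follows from $DF(0)=0$, which forces $D\psi(0)=0$; invariance follows from uniqueness of $u(\cdot,a)$ together with the semigroup property of $\phi_t$. The unstable manifold $W^u(0)$ is then the stable manifold of the time-reversed system $\dot{x}=-f(x)$.

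For the center manifold the same scheme applies with $E^c$ playing the role of the ``slow'' block and $E^s\oplus E^u$ the ``fast'' block, except that there is now no exponential dichotomy transverse to $E^c$: on $E^c$ one only has the polynomial bound $\|e^{A_c t}\|\le C(1+|t|)^{n-1}$. To compensate I would run the Lyapunov--Perron fixed point over trajectories $u:\R\to\R^n$ in a space weighted to allow growth slower than $e^{\eta|t|}$, with $\eta>0$ smaller than the spectral gap between $E^c$ and $E^s\cup E^u$; the cut-off again makes the associated operator contract, producing $\psi:E^c\to E^s\oplus E^u$ whose graph is invariant, tangent to $E^c$ at $0$, and locally of class $C^r$. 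The genuinely technical point in all three constructions is upgrading $\psi$ from Lipschitz to $C^r$: I would obtain this from the fiber-contraction theorem, running a second contraction on the candidate derivative $D\psi$ and iterating up to order $r$, or equivalently by formally differentiating the integral equations and bootstrapping the regularity. This regularity argument, together with the absence of a dichotomy in the center case, is where I expect the real difficulty to lie; once the cut-off system is in place, the existence, tangency, and invariance of the three manifolds are comparatively routine.
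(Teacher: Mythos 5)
The paper offers no proof of this statement: it is quoted as a classical result from Perko's textbook (and ultimately Carr for the center part), so there is nothing internal to compare your argument against beyond the citation. Judged on its own terms, your outline is the standard Lyapunov--Perron proof and is essentially sound: block-diagonalize $Df(0)$, cut off the nonlinearity to make it globally Lipschitz with small constant, solve the integral equation by contraction in an exponentially weighted space of curves to get $W^s(0)$ as a graph over $E^s$, obtain $W^u(0)$ by time reversal, repeat with the slow/fast splitting $E^c$ versus $E^s\oplus E^u$ for $W^c(0)$, and upgrade Lipschitz to $C^r$ by the fiber-contraction theorem. Two refinements you should make explicit. First, because of the cut-off, $W^c(0)$ (and the graphs generally) are invariant only for the modified flow, hence locally invariant and non-unique for the original system; the theorem as stated asserts exactly this local statement, but your write-up should say ``locally invariant'' rather than suggest a canonical global object. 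Second, in the $C^r$ bootstrap for the center manifold the choice of the weight $\eta$ is not free: the $q$-th derivative is controlled in a space with weight roughly $e^{q\eta|t|}$, so you need $r\eta$ to remain strictly inside the spectral gap between the center spectrum and the hyperbolic spectrum, and the argument only gives $C^r$ for finite $r$ (a $C^\infty$ vector field need not have a $C^\infty$ center manifold). With those points added, the proposal matches the standard textbook proof that the cited reference relies on.
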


Next, we introduce the counterpart of the previous results for the gradient descent. Specifically, instead of $\dot{x} = f(x)$, we consider $f(x) = \varphi(x)$ where $\varphi(x)$ is the iteration function of the gradient descent algorithm. For example, when minimizing the least squares loss function on $\M_r$, the iteration function is $\varphi(Z) = R\left(Z-\alpha P_{T_{Z}}(Z-X)\right)$. The strict saddle point is defined as follows. It basically says that a strict saddle point is a hyperbolic point of the iteration function. 

\begin{definition}[Strict saddle point] 
\label{def:saddle2}
\zzy{Consider a function $f(\cdot):\,\M\rightarrow\R$ defined on a manifold $\M$}. We call $Z\in\M$ a \emph{strict saddle point} of $f$, if 
    \begin{enumerate}[label = (\arabic*)]
        \item $\text{grad} f(Z))=0$;
        \item $\text{Hess }f({Z})$ has at least one negative eigenvalue. 
    \end{enumerate}
\end{definition}

We then have the following theorem for the asymptotic escape of isolated saddle points.

\begin{theorem}[{\cite[Theorem 2.15]{hou2020analysis}}]
\label{thm:escape2}
    Let $f(\cdot):\M\rightarrow\R$ be a $C^2$ function on $\M$. Suppose that $f(\cdot):\M\rightarrow\R$ has either finitely many saddle points, or countably many saddle points in a compact submanifold of $\M$, and all saddle points of $f$ are strict saddles as is defined in Definition \ref{def:saddle2}. Let $\mathcal{A}$ denote the set of strict saddles. Then we have
    \begin{align*}
        \text{Prob}(\lim_{k\rightarrow\infty} Z_k\in \mathcal{A})=0
    \end{align*}
\end{theorem}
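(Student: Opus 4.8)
## Proof proposal for Theorem \ref{thm:escape2}

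The plan is to reduce the manifold statement to the known Euclidean result of Lee et al.\ \cite{lee2016gradient,panageas2016gradient} by working in local coordinates, and to assemble the pieces using second countability of $\M$ together with countable subadditivity of the measure-zero property. First I would recall that the gradient descent iteration is generated by the map $\varphi(Z) = R(Z - \alpha\,\grad f(Z))$, which for $\alpha$ small enough is a $C^1$ local diffeomorphism of $\M$ (the retraction $R$ is smooth near the zero section, and $D\varphi(Z) = I - \alpha\,\Hess f(Z) + O(\alpha^2)$ is invertible once $\alpha < 1/L$ where $L$ bounds the spectral norm of $\Hess f$ on the relevant compact set). This is the manifold analogue of the ``gradient descent is a diffeomorphism'' lemma. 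The key structural fact is then: at a strict saddle $Z \in \mathcal{A}$, the differential $D\varphi(Z)$ has an eigenvalue $1 - \alpha\lambda > 1$, coming from the negative Hessian eigenvalue $\lambda < 0$ guaranteed by Definition \ref{def:saddle2}; hence $Z$ is an unstable fixed point of $\varphi$, and the unstable subspace $E^u$ is nontrivial.

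Next I would invoke the stable manifold theorem for the $C^1$ map $\varphi$ (the discrete-time, local-coordinate counterpart of Theorem \ref{thm:escapeGF}): in a neighborhood $B_Z$ of each strict saddle $Z$, the set of points whose forward orbit stays in $B_Z$ and converges to $Z$ is contained in an embedded $C^1$ submanifold $W^{s}_{\mathrm{loc}}(Z)$ of dimension $\dim E^s < \dim \M$, hence a set of measure zero in $\M$ (in any chart). The set of initializations from which the iteration converges to $\mathcal{A}$ is contained in $\bigcup_{k\ge 0}\varphi^{-k}\big(\bigcup_{Z\in\mathcal{A}} W^{s}_{\mathrm{loc}}(Z)\big)$, because convergence to a point in $\mathcal{A}$ forces the tail of the orbit to enter and remain in one of the neighborhoods $B_Z$. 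Since $\varphi$ is a $C^1$ diffeomorphism, each $\varphi^{-k}$ preserves measure-zero sets. The countability hypothesis on $\mathcal{A}$ (finitely many saddles, or countably many inside a compact submanifold) is exactly what is needed so that $\bigcup_{Z\in\mathcal{A}} W^{s}_{\mathrm{loc}}(Z)$ is a countable union of measure-zero sets, hence measure zero; then $\bigcup_{k}\varphi^{-k}(\cdot)$ is again a countable union of measure-zero sets. Therefore the initialization set has measure zero, i.e.\ $\mathrm{Prob}(\lim_k Z_k \in \mathcal{A}) = 0$.

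One technical point deserves care: the union-of-local-stable-manifolds argument requires a uniform lower bound on the ``escape radius'' so that convergence to $\mathcal{A}$ genuinely implies eventual entrapment in some single $B_Z$ with the local stable manifold structure valid there. In the finite case this is immediate; in the ``countably many in a compact submanifold'' case one uses compactness to extract the needed uniformity (finite subcover of the neighborhoods, or a uniform bound on $\|\Hess f\|$ and its modulus of continuity on a compact neighborhood of $\mathcal{A}$). A second point is that $W^{s}_{\mathrm{loc}}(Z)$ is defined in a coordinate chart; to conclude it is null in $\M$ one covers $\M$ by countably many charts (second countability) and uses that a $C^1$ submanifold of strictly lower dimension meets each chart in a null set.

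The main obstacle I anticipate is not conceptual but bookkeeping: carefully stating and applying the $C^1$ stable manifold theorem for the iteration map in local coordinates, and verifying that the nonlinear-coordinate error terms in $D\varphi$ do not destroy the hyperbolicity (i.e.\ that for all sufficiently small $\alpha$, and uniformly over the compact submanifold containing $\mathcal{A}$, the map $\varphi$ has a well-defined hyperbolic structure at every strict saddle with $\dim E^u \ge 1$). Once that uniform hyperbolicity and the local diffeomorphism property are in hand, the measure-theoretic conclusion follows from the standard Lee--Panageas template essentially verbatim. Since this theorem is quoted from \cite[Theorem 2.15]{hou2020analysis}, the write-up can either reproduce this argument in brief or simply cite it; I would include the sketch above for completeness.
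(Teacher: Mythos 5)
Your proposal is correct and follows essentially the same route as the source of this quoted result: the paper omits the proof and notes it rests on the center-stable manifold theorem for the iteration map (\cite[Theorem III.7]{shub2013global}), which is exactly the discrete-time stable manifold argument you invoke, combined with the local-diffeomorphism property of $\varphi$ and the countable-union-of-null-sets bookkeeping in the style of \cite{lee2016gradient, panageas2016gradient}. No gaps beyond the technical points you already flag.
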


The proof of the theorem is based on {\cite[Theorem III.7]{shub2013global}}, which is very similar to Theorem \ref{thm:escapeGF} but focuses on the contraction/expansion of the iteration function. We omit the details here.

As is mentioned in the previous subsection, using the parameterization $\St(m,r) \oplus \St(n, r) \oplus \F^{r\times r} \to \M_r$, each single critical point $Z_\#$ corresponds to a submanifold $\N_{Z_\#}$. We need the following definitions of the analogy of strict saddle points for submanifolds.

\begin{definition}[Critical submanifold]
   For $f: \M\mapsto\R$, a connected submanifold $\mathcal{N} \subset \M$ is called a \emph{critical submanifold} of $f$ if every point $Z$ in $\mathcal{N}$ is a critical point of $f$, i.e. $\text{grad} f(Z) = 0$ for any $Z\in\N$.
\end{definition}

\begin{definition}[Strict critical submanifold]
\label{def:saddle3}
    A critical submanifold $\mathcal{N}$ of $f$ is called a \emph{strict critical submanifold}, if $\forall Z\in \mathcal{N}$,
    \begin{align*}
        \lambda_{\text{min}}(\text{Hess } f(Z)) \le c < 0,
    \end{align*}
    where $\lambda_{\text{min}}(\cdot)$ takes the smallest eigenvalue, and $c = c(\mathcal{N})$ is a uniform constant for all $Z\in\mathcal{N}$ depending only on $\mathcal{N}$.
\end{definition}

Using the above definitions, we have the following theorems on the asymptotic escape of strict critical submanifolds for gradient descent.

\begin{theorem}[{\cite[Theorem 2.19]{hou2020analysis}}]
\label{thm:escape3}
    Let $f(\cdot):\M\rightarrow\R$ be a $C^2$ function on $\M$. Suppose that $f(\cdot):\M\rightarrow\R$ has either finitely many critical submanifolds, or countably many critical submanifolds in a compact region of $\M$, and all of them are strict critical submanifolds as defined in Definition \ref{def:saddle3}. Let $\mathcal{A}$ denote the union of strict critical submanifolds. Then we have
    \begin{align*}
        \text{Prob}(\lim_{k\rightarrow\infty} Z_k\in \mathcal{A})=0.
    \end{align*}
\end{theorem}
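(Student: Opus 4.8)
The plan is to reduce the claim to a local analysis near each strict critical submanifold and then invoke the stable manifold theorem for maps, mirroring the isolated-saddle argument behind Theorem \ref{thm:escape2} and \cite{shub2013global, lee2016gradient}. Let $\varphi:\M\to\M$ denote the gradient descent iteration map, e.g. $\varphi(Z)=R\big(Z-\alpha\,\grad f(Z)\big)$ for the least squares loss, so that $Z_{k+1}=\varphi(Z_k)$. At any critical point $Z_\#$ of $f$ one has $\grad f(Z_\#)=0$, hence $\varphi(Z_\#)=Z_\#$, and by first-order rigidity of the retraction together with the identity $D(\grad f)(Z_\#)=\Hess f(Z_\#)$ at a critical point, the differential is $D\varphi(Z_\#)=\mathrm{Id}-\alpha\,\Hess f(Z_\#)$ on $T_{Z_\#}\M$. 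I would first fix the step size $\alpha>0$ small enough that $\mathrm{Id}-\alpha\,\Hess f$ is invertible throughout the compact region containing the critical submanifolds (possible since $f\in C^2$ makes $\Hess f$ bounded there); then $\varphi$ is a local diffeomorphism there, so preimages under $\varphi$ of Lebesgue-null sets remain Lebesgue-null.

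Next I would analyze the spectrum of $D\varphi$ along a strict critical submanifold $\mathcal{N}$. Because $\varphi$ fixes $\mathcal{N}$ pointwise, $D\varphi(Z_\#)$ restricts to the identity on $T_{Z_\#}\mathcal{N}$; and because every point of $\mathcal{N}$ is critical, $\Hess f(Z_\#)$ annihilates $T_{Z_\#}\mathcal{N}$ and restricts to a self-adjoint operator on the normal space $N_{Z_\#}\mathcal{N}$, with eigenvalues $\lambda_1\le\cdots\le\lambda_d$. By Definition \ref{def:saddle3}, $\lambda_1\le c<0$ with $c$ uniform over all of $\mathcal{N}$, so $D\varphi(Z_\#)$ has the eigenvalue $1-\alpha\lambda_1\ge 1-\alpha c>1$ in a normal direction, uniformly. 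Hence the center-stable subspace of $D\varphi(Z_\#)$ (spanned by eigenvectors with eigenvalue of modulus $\le 1$) has codimension at least one, uniformly along $\mathcal{N}$. Applying the center-stable manifold theorem for $C^1$ maps — the map analogue of Theorem \ref{thm:escapeGF}, see \cite[Theorem III.7]{shub2013global}, in its version for a normally (partially) hyperbolic critical submanifold — I obtain, for each compact piece of $\mathcal{N}$, a neighborhood $B$ and an embedded $C^1$ center-stable manifold $W^{cs}_{\mathrm{loc}}\subset B$ of codimension $\ge1$, with the property that any forward orbit staying in $B$ lies in $W^{cs}_{\mathrm{loc}}$; in particular $W^{cs}_{\mathrm{loc}}$ has zero Lebesgue measure.

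To globalize, suppose a trajectory satisfies $\lim_{k\to\infty}Z_k\in\mathcal{A}$, say $Z_k\to Z_*\in\mathcal{N}$. Cover $\mathcal{A}$ by countably many such neighborhoods $B_j$ with center-stable manifolds $W^{cs}_{\mathrm{loc}}(\mathcal{N}_j)$: this is possible because there are finitely many strict critical submanifolds (or countably many in a compact region) and each is second countable, hence covered by countably many compact pieces. For large $k$ the iterates lie in whichever $B_j$ contains $Z_*$, so the forward orbit from some index $K$ stays in $B_j$, whence $Z_K\in W^{cs}_{\mathrm{loc}}(\mathcal{N}_j)$ and therefore $Z_0\in\varphi^{-K}\big(W^{cs}_{\mathrm{loc}}(\mathcal{N}_j)\big)$. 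Thus
\[
\big\{\,Z_0\in\M:\ \lim_{k\to\infty}Z_k\in\mathcal{A}\,\big\}\ \subset\ \bigcup_{K\ge0}\ \bigcup_{j\ge1}\ \varphi^{-K}\big(W^{cs}_{\mathrm{loc}}(\mathcal{N}_j)\big).
\]
The right-hand side is a countable union of Lebesgue-null sets — each $W^{cs}_{\mathrm{loc}}(\mathcal{N}_j)$ is null, and each $\varphi^{-K}$ preserves null sets since $\varphi$ is a local diffeomorphism near $\mathcal{A}$ — hence null. Since the random initialization $Z_0$ is drawn from a distribution absolutely continuous with respect to the Riemannian volume on $\M$, the event $\lim_{k\to\infty}Z_k\in\mathcal{A}$ has probability zero.

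The main obstacle, relative to the isolated strict-saddle case of Theorem \ref{thm:escape2}, is precisely the non-isolated, submanifold structure: one must produce center-stable manifolds that are simultaneously of codimension $\ge1$ and collectible into a countable null cover. This is exactly where the two hypotheses enter — the uniform spectral gap $c<0$ in Definition \ref{def:saddle3} gives uniform normal hyperbolicity, so the local center-stable manifolds exist with codimension $\ge1$ and with sizes bounded below on compact pieces, while the finiteness/compactness hypothesis together with second countability reduces the continuum of fixed points to a countable family of such pieces. A secondary technical point, standard since \cite{lee2016gradient}, is checking that $\varphi$ is a genuine diffeomorphism on a neighborhood of $\mathcal{A}$ (so that $\varphi^{-K}$ preserves null sets), which follows from invertibility of $\mathrm{Id}-\alpha\,\Hess f$ for small $\alpha$.
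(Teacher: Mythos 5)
Your proposal follows essentially the same route as the source the paper cites for this statement (the paper itself omits the details and refers to \cite[Theorem 2.19]{hou2020analysis}, whose proof is built on \cite[Theorem III.7]{shub2013global}): treat the gradient-descent iteration map $\varphi$ as a $C^1$ local diffeomorphism, note that along a strict critical submanifold $D\varphi = \mathrm{Id}-\alpha\,\Hess f$ has a uniformly expanding eigenvalue $1-\alpha c>1$ in a normal direction while the tangent directions to the submanifold sit in the center spectrum, invoke the center-stable manifold theorem to get codimension-$\ge 1$ local center-stable sets whose neighborhoods have uniform size thanks to the uniform constant $c$, cover the countably many (or finitely many, compactly located) critical submanifolds by countably many such pieces, and conclude that the basin of attraction is contained in a countable union of preimages of null sets. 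That is exactly the intended argument, and your identification of where the two hypotheses (uniform spectral gap, finiteness/compactness plus second countability) enter is on target.

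One step as written does not hold up: you justify ``$\varphi^{-K}$ preserves null sets'' by invertibility of $\mathrm{Id}-\alpha\,\Hess f$ only \emph{near} $\mathcal{A}$ (or on a compact region containing it). That is not enough. The preimage $\varphi^{-K}\big(W^{cs}_{\mathrm{loc}}\big)$ consists of initial points that may lie far from $\mathcal{A}$ and only reach the neighborhood after $K$ steps; if $D\varphi$ were singular somewhere along the way, a positive-measure set could be mapped into the null set $W^{cs}_{\mathrm{loc}}$, and the countable-union argument would break. The standard fix, implicit in \cite{lee2016gradient} and in the cited theorem, is to require the nondegeneracy of $D\varphi$ (equivalently a step-size bound of the form $\alpha<1/L$ with $L$ a global bound on the Hessian, or more generally that $\varphi$ has the null-set-preimage property) on the whole region where the iterates can live, not just in a neighborhood of the critical set. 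With that global condition stated, the rest of your argument goes through as in the reference.
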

We remark that the results on the asymptotic escape of  saddle points in the Euclidean space, e.g., the results in \cite{lee2016gradient}, can be seen as special cases of Theorem \ref{thm:escape3}.

\section{Main result}
\label{sec:main}
From this section on, we focus on the set of symmetric positive semi-definite (SPSD) matrices, i.e., $m=n$, and $\M_r = \left\{ Z \in \S_n, \, Z \succcurlyeq 0, \, \text{rank}(Z)=r \right\}$. The ground truth $X$ is also a rank-$r$ SPSD matrix.
Recall that by Definition \ref{def:spurious}, the set of spurious critical points is  
$    \mathcal{S}_{\#} = \cup_{s=0}^{r-1} \mathcal{S}_s
$,
where each $\mathcal{S}_s$ ($0 \le s \le r-1$) is the set of rank-$s$ spurious critical points, i.e.,
\begin{align*}
     \mathcal{S}_s := &\big\{Z_\#: \, Z_\# \in \mathcal{S}_{\#}, \text{ rank}(Z_\#) =s \} \\
     = &\big\{Z_\#: \, Z_\# = U_{1}D_{1} {U_{1}}^*, \,\, U_{1} \in \F^{n\times s}, \,D_{1}\in\F^{s\times s}\}.
\end{align*}
The first main result of this paper is as follows.
\begin{theorem}[Asymptotic escape of $\mathcal{S}_{r-1}$: gradient flow]
\label{thm:GF}
\blue{Let $f(Z) = \frac{1}{2} \|Z-X\|_F^2$, where $X \in \M_r$ has distinct eigenvalues. Let $Z_t: t\ge 0$ be the gradient flow of $f(Z)$ on $\M_r$ starting from a random initialization $Z_0$. Then we have that $Z_t \in \M_r \, \forall  \,\, 0\le t < +\infty$, and}
\begin{align*}
    \text{Prob }(\lim_{t\rightarrow\infty} Z_t\in S_{r-1})=0.
\end{align*}
\end{theorem}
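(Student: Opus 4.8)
The plan is to reduce the escape statement on $\M_r$ to a classical strict-saddle escape statement on a nicely parameterized space, where the singular Hessian behavior of $Z_\#$ becomes benign. First I would pass to the dynamical low-rank parameterization $(U,S)$ with $Z = USU^\top$, $U\in\St(n,r)$, $S\in\S_r$ nonsingular, which by Lemma~\ref{lemma:DLRA_existence} gives a globally defined flow in forward time; this already settles the claim $Z_t\in\M_r$ for all $0\le t<\infty$. In these coordinates the flow is
\begin{align*}
    \begin{cases}
        \dot U = P_U^\perp X U S^{-1},\\
        \dot S = -S + U^\top X U,
    \end{cases}
\end{align*}
whose right-hand side blows up exactly as $S\to$ singular, i.e. precisely near the lift $\N_{Z_\#}$ of a rank-$(r-1)$ spurious point. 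The key idea is to introduce a rescaled time (or equivalently multiply the vector field by a scalar that vanishes at the right rate, e.g. the smallest singular value of $S$ or $\det S$) so that the rescaled vector field extends $C^1$ across $\N_{Z_\#}$; I would verify the $C^0$- and $C^1$-extension by carefully expanding $S^{-1}$ in the one shrinking direction and checking that the troublesome $S^{-1}$ factor in $\dot U$ is cancelled after rescaling. Since a strictly positive time reparameterization does not change trajectories or limit sets, $\lim_{t\to\infty}Z_t\in S_{r-1}$ for the original flow iff the corresponding event holds for the rescaled flow.

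The second half is to show that under the rescaled flow, the lifted critical set $\N_{Z_\#}$ corresponding to each rank-$(r-1)$ spurious point is a \emph{strict critical submanifold} in the sense of Definition~\ref{def:saddle3}: it is a connected critical submanifold (the redundancy being exactly the $O(1)$ gauge freedom $U\mapsto UQ$, $S\mapsto Q^\top S Q$ fixing the column space, together with the freedom in the discarded eigendirection), and the linearization of the rescaled field at every point of $\N_{Z_\#}$ has at least one eigenvalue with positive real part — equivalently the Hessian of the associated potential has a uniformly negative eigenvalue. Here I would use the hypothesis that $X$ has distinct eigenvalues: a rank-$(r-1)$ spurious point keeps $r-1$ of the $r$ eigenpairs of $X$ and drops one, say the pair $(d_j,u_j)$; the unstable direction is the direction that grows the dropped component back, and the relevant rate is governed by $d_j>0$. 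The distinct-eigenvalue assumption guarantees there are finitely many such spurious points and that the escape directions are non-degenerate with a spectral gap, giving the uniform constant $c<0$. Then Theorem~\ref{thm:escapeGF} (applied along $\N_{Z_\#}$, via the center/stable/unstable manifold decomposition) shows the stable set of $\N_{Z_\#}$ is a positive-codimension submanifold, hence measure zero; taking the finite union over all rank-$(r-1)$ spurious points and pulling back to $\M_r$ (a smooth, a.e.-regular map preserves null sets in the relevant direction) yields $\mathrm{Prob}(\lim_t Z_t\in S_{r-1})=0$.

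For the measure-zero conclusion I would invoke the gradient-flow version directly: by Theorem~\ref{thm:escapeGF} the set of initial conditions in the parameterized domain whose forward orbit converges to a given point of $\N_{Z_\#}$ lies in a $C^1$ stable manifold of dimension strictly less than the ambient dimension (the unstable direction costs at least one dimension), so it is Lebesgue-null; a countable/finite union over $\N_{Z_\#}$ and over the points of each $\N_{Z_\#}$ (covered by compactness and the uniform constant $c$ from strictness) remains null, and the pushforward under the smooth parameterization map sends the null ``bad set'' to a null set of initializations $Z_0$. Since a random initialization is absolutely continuous with respect to Lebesgue measure on $\M_r$, this gives probability zero.

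The main obstacle I expect is the $C^1$-extension of the rescaled vector field across $\N_{Z_\#}$ and the attendant spectral computation: one must choose the rescaling factor so that it kills the $S^{-1}$ singularity in $\dot U$ \emph{without} introducing a degenerate (zero-derivative) direction that would make $\N_{Z_\#}$ fail strictness, and then compute the linearization along the (non-isolated, gauge-redundant) critical submanifold carefully enough to exhibit a uniformly positive eigenvalue. Controlling the interplay between the multiplicity-one shrinking singular value of $S$, the corresponding eigenvector's motion, and the distinct eigenvalues of $X$ — and checking that the rescaling is genuinely $C^1$ and not merely $C^0$ — is where the real work lies; this is presumably why the result is currently limited to the rank-$(r-1)$ case, since for lower-rank spurious points several singular values of $S$ degenerate simultaneously and a single scalar rescaling no longer suffices.
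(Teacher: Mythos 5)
Your proposal follows essentially the same route as the paper: parameterize via the dynamical low-rank decomposition $(U,S)$, rescale the vector field by $\sigma_{\min}(S)$ so that it admits a $C^0$- and $C^1$-extension across the lift $\N_{Z_\#}$, show each lifted rank-$(r-1)$ spurious set is a strict critical submanifold with unstable direction along the dropped eigenpair (rate $d_r>0$), and then invoke the center/stable manifold escape theorem together with the fact that the rescaling preserves trajectories and limit points. The steps you flag as the ``real work'' (the $C^1$-extension, the eigenvalue computation along the gauge-redundant critical set, and why a single scalar rescaling fails below rank $r-1$) are exactly the contents of Lemmas \ref{lemma:C0extension}, \ref{lemma:C1extension}, \ref{Lemma:limit} and \ref{lemma:strict} in the paper.
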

The rest of this section is devoted to the proof of Theorem \ref{thm:GF}.

\subsection{Parameterization of \texorpdfstring{$\overline{\M_r}$}{Mr}}
\label{sec:parameterization}

In order to use the dynamical low-rank approximation from Section \ref{sec:DLRA}, we decompose a rank-$r$ matrix $Z \in \S_n$ into $Z = USU^*$, where $U \in \St(n,r)$ and $S \in \S_r$. This decomposition differs from the eigenvalue decomposition in that $S$ is not necessarily a diagonal matrix.

Consider a spurious critical point $Z_\# = U_{1} D_{1} {U_{1}}^{*} \in \M_s \subset \overline{\M_r}\backslash\M_r$, where $U_{1}\in \F^{n\times s}$ represents the $s$ eigenvectors that are also eigenvectors of $X$. We would like to determine a submanifold $\N_{Z_\#} \subset \St(n,r) \oplus \S_r$ that corresponds to $Z_\#$. Assume that
\begin{align*}
    Z_\# = U_\# S_\# U_\#^*,
\end{align*}
where $$S_\# = P_\# \Sigma_\# P_\#^*$$ is the eigenvalue decomposition of $S_\#$. Then there exists $U_3 \perp U_1$, such that
\begin{align*}
    U_\# = (U_{1}, U_{3})P_\#^*, \qquad \Sigma_\# = 
    \begin{pmatrix}
        D_{1} & 0 \\ 0 & 0
    \end{pmatrix}.
\end{align*}
In addition, for $Z_\#$ to be a critical point of $f(Z) = \frac{1}{2}\|Z-X\|_F^2$, we need $P_{T_{Z_\#}}(Z_\#-X) = 0$. One can show that this gives
\begin{align*}
    U_3 \perp U_x = (U_1, U_2).
\end{align*}
In other words, $U_3$, the $n\times (r-s)$ matrix that makes up for the missing rank, should be chosen to be perpendicular to the missing component $U_2$. This also gives us $\lim_{Z \to Z_\#}\text{grad} f(Z) = 0$, a property that will be useful in upcoming computations.

To sum up, a spurious critical point $Z_\# \in \mathcal{S}_{\#}$ can be parameterized as 
\begin{align*}
    \N_{Z_\#}= \left\{(U_\#, \, S_\#): \,\, U_\# = (U_{1}, U_{3})P_\#^*, \,\, S_\# = P_\#
    \begin{pmatrix}
        D_{1} & 0 \\ 0 & 0
    \end{pmatrix}
    P_\#^*, \,\, U_3 \perp U_x
    \right\},
\end{align*}
where $P_\# \in \text{SO}(r)$ is an orthonormal matrix.

\zzy{
\begin{lemma}
\label{lemma:submanifold}
    $\N_{Z_\#}$ is an embedded submanifold of the manifold $\M:=\St(n,r) \oplus \S_r$.
\end{lemma}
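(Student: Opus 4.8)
The plan is to realize $\N_{Z_\#}$ as a single orbit of a smooth action of a compact Lie group on the ambient manifold $\M=\St(n,r)\oplus\S_r$, and then to invoke the classical fact that an orbit of such an action is a closed embedded submanifold.

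First I would set up the group. Write $U_x=(U_1,U_2)$ and $\Sigma_\#=\begin{pmatrix}D_1 & 0\\ 0 & 0\end{pmatrix}$ as above, let
\[
    G_1:=\{\,g\in\text{O}(n):\ gv=v\ \text{ for all }v\in\text{Col}(U_x)\,\}\ \cong\ \text{O}(n-r),\qquad G:=G_1\times\text{SO}(r),
\]
and note that $G$ is a compact Lie group. It acts on $\M$ by $(g,Q)\cdot(U,S):=(gUQ^{*},\,QSQ^{*})$; one checks directly that $gUQ^{*}\in\St(n,r)$ and $QSQ^{*}\in\S_r$, and that this is a smooth left action of $G$ on $\M$.

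Next I would prove that $\N_{Z_\#}$ is exactly one $G$-orbit. For invariance, take $(U_\#,S_\#)=\big((U_1,U_3)P_\#^{*},\,P_\#\Sigma_\#P_\#^{*}\big)\in\N_{Z_\#}$ with $U_3\perp U_x$; since $g$ fixes $\text{Col}(U_x)$ pointwise (hence $gU_1=U_1$) and, being orthogonal, maps $\text{Col}(U_x)^{\perp}$ onto itself (hence $gU_3\perp U_x$), one computes
\[
    (g,Q)\cdot(U_\#,S_\#)=\big((U_1,\,gU_3)\,\widetilde P^{*},\ \widetilde P\,\Sigma_\#\,\widetilde P^{*}\big),\qquad \widetilde P:=QP_\#\in\text{SO}(r),
\]
which is again of the form defining $\N_{Z_\#}$. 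For transitivity, fix the base point $(U_\ast,S_\ast):=\big((U_1,U_3^{\ast}),\,\Sigma_\#\big)$ for some reference frame $U_3^{\ast}\perp U_x$ (such a frame exists as long as $n>r$; otherwise $\N_{Z_\#}=\emptyset$ and there is nothing to prove). Given any $(U_\#,S_\#)\in\N_{Z_\#}$ as above, set $Q:=P_\#$ and, using that $\text{O}(n-r)$ acts transitively on the orthonormal $(r-s)$-frames of $\text{Col}(U_x)^{\perp}$, choose $g\in G_1$ with $gU_3^{\ast}=U_3$; then $(g,Q)\cdot(U_\ast,S_\ast)=(U_\#,S_\#)$. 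Hence $\N_{Z_\#}=G\cdot(U_\ast,S_\ast)$. Finally, since $G$ is a compact Lie group acting smoothly on the smooth manifold $\M$, every $G$-orbit is a closed embedded submanifold of $\M$; applying this to $\N_{Z_\#}$ gives the claim.

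I expect the transitivity step to be the crux: one has to pair the two independent ``moving parts'' of $\N_{Z_\#}$ --- the frame $U_3$ inside $\text{Col}(U_x)^{\perp}$ and the rotation $P_\#\in\text{SO}(r)$ --- with the two factors of $G$, while checking that the fixed columns $U_1$ and the fixed spectrum of $S_\#$ are not disturbed. A secondary point worth making explicit is that $G$ is defined so as to act on all of $\M$, not merely on $\N_{Z_\#}$, which is what licenses the orbit-is-embedded theorem. One could instead bypass that theorem by constructing explicit slice charts via the constant-rank theorem applied to the parameterizing map $(U_3,P_\#)\mapsto\big((U_1,U_3)P_\#^{*},\,P_\#\Sigma_\#P_\#^{*}\big)$ on the compact manifold $\St(n-r,r-s)\times\text{SO}(r)$, but the orbit argument is shorter.
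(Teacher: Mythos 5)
Your orbit argument is correct, but it is a genuinely different route from the paper's. The paper deliberately avoids the ``nice theorem'' route: it notes that the submersion theorem is unavailable (the defining constraints do not present $\N_{Z_\#}$ as a preimage of a regular value) and instead builds explicit coordinate-slice charts, invoking the characterization of embedded submanifolds via $\varphi$-coordinate slices and an explicit atlas of the Stiefel manifold, with a fairly long hands-on construction. You sidestep the regular-value issue entirely by exhibiting $\N_{Z_\#}$ as a single orbit of the smooth action $(g,Q)\cdot(U,S)=(gUQ^{*},QSQ^{*})$ of the compact group $G_1\times\mathrm{SO}(r)$ (with $G_1\cong \mathrm{O}(n-r)$ fixing $\mathrm{Col}(U_x)$ pointwise), and then citing the standard fact that orbits of smooth compact Lie group actions are closed embedded submanifolds. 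Your invariance and transitivity checks are the right ones and go through: $g$ fixes $U_1$ and preserves $\mathrm{Col}(U_x)^{\perp}$, and $\mathrm{O}(n-r)$ acts transitively on orthonormal $(r-s)$-frames of $\mathrm{Col}(U_x)^{\perp}$. What your approach buys is brevity, plus extra information for free: compactness/closedness of $\N_{Z_\#}$ and its dimension as $\dim G$ minus the dimension of the stabilizer; what the paper's approach buys is completely explicit chart functions and coordinate slices, obtained without appealing to the orbit theorem. Two small points to tighten: transitivity (equivalently nonemptiness of $\N_{Z_\#}$) needs $r-s\le n-r$, so your parenthetical ``exists as long as $n>r$'' is accurate only in the rank-$(r-1)$ case $s=r-1$ relevant to the main theorem, not for general $s$; and if you read $\mathrm{SO}(r)$ strictly as the special orthogonal group while $P_\#$ ranges over all orthogonal matrices (the paper's $\mathrm{SO}(r)$ is actually $\St(r,r)=\mathrm{O}(r)$), you should either take the full orthogonal group in the second factor (still compact) or absorb a reflection into the $U_3$-block, e.g.\ replace $(g,Q)$ by $(g',QR)$ with $R=\mathrm{diag}(I_s,J)$, $\det J=-1$, which fixes $\Sigma_\#$.
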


\begin{proof}
    See Appendix \ref{sec:proof_submanifold}.
\end{proof}
}

\subsection{Rescaled gradient flow}
\label{sec:rescaled}
Consider the dynamical low-rank description of the gradient flow for the objective function $f(Z) = \frac{1}{2}\|Z-X\|_F^2$. Impose the constraint $\dot{U}^* U = 0$ as required by Lemma \ref{lem: koch2007dynamical}. Plug the Riemannian gradient $P_{T_Z}(Z-X)$ into (\ref{eq:DLRA_original}), and notice that $P_U^\perp Z = 0$ and $P_U U = U$, we obtain the following ODE system:
\begin{align}
    \label{DLRA}
    \tag{DLRA}
        \begin{cases}
            \dot{U} = F(U,S) := P_U^\perp X U S^{-1},  \\
            \dot{S} = H(U,S) := -S + U^* X U.
        \end{cases}
\end{align}

The main tool for the proof of asymptotic escape is the following \emph{rescaled gradient flow} ODE system:
\begin{align}
    \label{DLRA*}
    \tag{DLRA*}
        \begin{cases}
            \dot{U} = \widetilde{F}(U,S) := P_U^\perp X U S^{-1} \cdot \sigma_{\text{min}}(S), \\
            \dot{S} = \widetilde{H}(U,S) := (-S + U^* X U)\cdot \sigma_{\text{min}}(S).
        \end{cases}
\end{align}
Here $\sigma_{\text{min}}(S)$ denotes the smallest eigenvalue of the $r \times r$ matrix $S$. In other words, the rescaled system (\ref{DLRA*}) is just the original system (\ref{DLRA}) times a scalar $\sigma_{\text{min}}(S)$.

We first show that the rescaled system (\ref{DLRA*}) is well-defined. 
\begin{lemma}[Continuity]
    The functions $ \widetilde{F}(U,S) $ and $ \widetilde{H}(U,S) $ are $C^0$ in {$\M_r$}.
\end{lemma}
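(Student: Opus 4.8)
The plan is to verify that $\widetilde F$ and $\widetilde H$ are continuous on all of $\M_r$ (in the parameterization by $(U,S)\in\St(n,r)\oplus\S_r$ with $S$ nonsingular) and then to check that the potential singularity at rank-deficient $S$ — i.e. as $\sigma_{\min}(S)\to 0$ — is removable because the rescaling factor exactly cancels the blow-up coming from $S^{-1}$. First I would observe that on the open set where $S$ is nonsingular, $\widetilde F$ and $\widetilde H$ are manifestly continuous: $P_U^\perp = I - UU^*$ is a polynomial in the entries of $U$, matrix multiplication and the map $S\mapsto U^*XU$ are continuous, $S\mapsto S^{-1}$ is continuous on the nonsingular locus, and $S\mapsto\sigma_{\min}(S)$ is continuous everywhere (eigenvalues depend continuously on matrix entries, e.g.\ by Weyl's inequality). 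So the only issue is continuity at boundary points of $\M_r$ where $\sigma_{\min}(S)=0$, which are exactly the points where the original system (\ref{DLRA}) is singular.

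The key estimate is that $\|S^{-1}\|\cdot\sigma_{\min}(S)$ stays bounded. Writing $S$ in its eigenbasis with eigenvalues $\lambda_1,\dots,\lambda_r$ ordered so that $|\lambda_r| = \sigma_{\min}(|S|)$ (being careful here: in the SPSD-adjacent setting the relevant eigenvalues are nonnegative along the flow, cf.\ Lemma \ref{lemma:DLRA_existence}, so $\sigma_{\min}(S)$ is the smallest eigenvalue and $S^{-1}$ scales like $1/\sigma_{\min}(S)$ in its worst direction), one has $\|S^{-1}\|_2 = 1/\sigma_{\min}(S)$, hence $\|S^{-1}\sigma_{\min}(S)\|_2 = 1$. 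Therefore $\widetilde F(U,S) = P_U^\perp X U \cdot \big(S^{-1}\sigma_{\min}(S)\big)$ is a product of a factor bounded by $\|X\|$ (since $\|P_U^\perp\|\le 1$, $\|U\|\le 1$) and a factor of norm $1$, so $\|\widetilde F\|$ is bounded by $\|X\|$ on all of $\M_r$. More importantly, I would argue continuity at a boundary point $(U_\#, S_\#)$ with $\sigma_{\min}(S_\#)=0$: along any sequence $(U_k,S_k)\to(U_\#,S_\#)$ with $S_k$ nonsingular, write $S_k^{-1}\sigma_{\min}(S_k) = \Pi_k + \text{(bounded, }\to 0\text{)}$, where one decomposes according to the spectral projection; the subtle part is that when $\sigma_{\min}(S_\#)$ has multiplicity greater than one, the individual eigenprojectors of $S_k$ need not converge, but the product $S_k^{-1}\sigma_{\min}(S_k)$ still has a well-defined limit. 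I would handle this by noting that on the eigenspace of $S_\#$ corresponding to eigenvalue $0$ (multiplicity $j$), the matrix $S_k^{-1}\sigma_{\min}(S_k)$ restricted there converges to the identity times $1$ in a suitable averaged sense only if $j=1$; for general $j$ one shows instead that $\widetilde F(U_k,S_k)$ itself converges by using that $P_{U_\#}^\perp X U_\#$ annihilates the problematic subspace — indeed at a genuine spurious critical point $U_3\perp U_x$ forces $XU_\# $ to have no component obstructing the limit (this is where the critical-point structure from Section \ref{sec:parameterization} enters).

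Concretely, the cleanest route: fix $(U_\#,S_\#)$ and split $\R^r = E_0 \oplus E_0^\perp$ where $E_0 = \ker S_\#$. For $(U,S)$ near $(U_\#,S_\#)$, block-decompose; on $E_0^\perp$ the factor $S^{-1}\sigma_{\min}(S)$ is continuous with limit $(S_\#|_{E_0^\perp})^{-1}\cdot 0 = 0$... wait — so actually $S^{-1}\sigma_{\min}(S) \to 0$ on $E_0^\perp$ and $\to \text{(an operator of norm }1)$ on $E_0$. So $\widetilde F(U,S) \to P_{U_\#}^\perp X U_\# \cdot Q$ for some limiting operator $Q$ supported on $E_0$ with $\|Q\|\le 1$; continuity then requires showing this limit is independent of the approaching sequence, which reduces to showing $P_{U_\#}^\perp X U_\# $ kills $E_0$. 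This is precisely the content of $\lim_{Z\to Z_\#}\text{grad} f(Z) = 0$ established after Definition \ref{def:spurious}: the columns of $U_\#$ spanning $E_0$ (which live in $\text{Col}(U_3)$) satisfy $U_3\perp U_x$, so $XU_3 \in \text{Col}(U_x)\subset\text{Col}(U_\#)$, hence $P_{U_\#}^\perp X U_3 = 0$. Thus $\widetilde F$ extends continuously by $\widetilde F(U_\#,S_\#) = 0$, and similarly $\widetilde H(U_\#,S_\#) = 0$ since $(-S+U^*XU)\sigma_{\min}(S)\to 0$ trivially as the second factor vanishes and the first stays bounded.

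\textbf{Main obstacle.} The delicate point is the non-convergence of eigenprojectors of $S$ when $\sigma_{\min}(S_\#)$ is degenerate; the argument above sidesteps it by only needing convergence of $S^{-1}\sigma_{\min}(S)$ after multiplication by $P_U^\perp X U$, and exploiting that the problematic spectral subspace is annihilated by the latter at the limit point. I would also need to be slightly careful that $\sigma_{\min}(S)$ means "smallest eigenvalue" (not smallest singular value) and that the flow keeps $S$ positive, so that $\sigma_{\min}(S)>0$ along trajectories and $S^{-1}$ is genuinely controlled — but for the bare continuity statement on $\M_r$ (where $S$ is nonsingular by definition of the parameterization) only the extension to the closure requires this care, and that is exactly what Lemma \ref{lemma:DLRA_existence} supplies.
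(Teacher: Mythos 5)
Your first paragraph is precisely the paper's proof of this lemma: on $\M_r$ the parameterizing $S$ is nonsingular, so $S^{-1}$ (hence $F$ and $H$) is continuous, and $\sigma_{\min}(S)$ is a continuous function of $S$, so the products $\widetilde{F},\widetilde{H}$ are $C^0$ there. The remaining paragraphs, which extend $\widetilde{F},\widetilde{H}$ to points with $\sigma_{\min}(S)=0$, are not needed for this statement — that is the separate $C^0$-extension result (Lemma \ref{lemma:C0extension}), which the paper proves via the $\sin\Theta$ eigenvalue separation and the limit $S^{-1}\sigma_{\min}(S)\to p_{\#,r}p_{\#,r}^*$.
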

\begin{proof}
    Inside $\M_r$, the matrix inverse $S^{-1}$ is well-defined, so are the functions $F(U,S) $ and $ H(U,S) $. Then use the fact that the smallest eigenvalue $\sigma_{\text{min}}(S)$ is $C^0$ with respect to $S$.
\end{proof}

\begin{lemma}[$C^0$-extension]
\label{lemma:C0extension}
    The functions $ \widetilde{F}(U,S) $ and $ \widetilde{H}(U,S) $ can be extended continuously to $\mathcal{S}_{r-1}$.
\end{lemma}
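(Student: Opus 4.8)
The only source of blow-up near $\mathcal{S}_{r-1}$ is the factor $S^{-1}$, whose norm is $1/\sigma_{\min}(S)$; everything else in $\widetilde F,\widetilde H$ is a composition of projections and polynomial maps, hence continuous on all of $\St(n,r)\oplus\S_r$. The plan is to cancel this blow-up algebraically: rewrite $S^{-1}\sigma_{\min}(S)$ as a quotient of two functions that are \emph{jointly continuous on a full neighborhood of $\mathcal{S}_{r-1}$} in $\St(n,r)\oplus\S_r$, with nonvanishing denominator. Let $\operatorname{adj}(S)=\det(S)\,S^{-1}$ be the adjugate of $S$; its entries are polynomials in those of $S$, so $\operatorname{adj}(\cdot)$ is continuous on $\S_r$ without any inversion. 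Let $\pi(S)$ denote the product of the $r-1$ largest eigenvalues of $S$; since the sorted eigenvalues of a symmetric matrix depend continuously on the matrix, $\pi(\cdot)$ is continuous on $\S_r$, and for $S\succ0$ one has $\det(S)=\sigma_{\min}(S)\,\pi(S)$, hence on $\M_r$
\begin{equation*}
  S^{-1}\sigma_{\min}(S)=\frac{\sigma_{\min}(S)}{\det(S)}\operatorname{adj}(S)=\frac{\operatorname{adj}(S)}{\pi(S)},\qquad\text{so}\qquad \widetilde F(U,S)=\frac{P_U^{\perp}XU\,\operatorname{adj}(S)}{\pi(S)},\quad \widetilde H(U,S)=(-S+U^*XU)\,\sigma_{\min}(S),
\end{equation*}
with no matrix inverse on the right-hand sides.

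Next I would verify that $\pi$ does not vanish near $\mathcal{S}_{r-1}$. Fix $Z_\#\in\mathcal{S}_{r-1}$ with parameterization $U_\# S_\# U_\#^*$, $S_\#=P_\#\operatorname{diag}(D_1,0)P_\#^*$, $P_\#\in\text{SO}(r)$. Because $\operatorname{rank}(Z_\#)=r-1$ and $D_1\in\S_{r-1}$, the block $D_1$ is nonsingular — in fact $D_1\succ0$, since $X\succcurlyeq 0$ of rank $r$ forces $D_x\succ0$ and $D_1$ is a diagonal block of $D_x$. Hence the eigenvalues of $S_\#$ are the $r-1$ positive diagonal entries of $D_1$ together with a \emph{simple} zero, so $\pi(S_\#)=\det(D_1)>0$. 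By continuity of $\pi$ there is an open neighborhood $\mathcal{O}$ of $\mathcal{N}_{Z_\#}$ in $\St(n,r)\oplus\S_r$ on which $\pi>0$; on $\mathcal{O}$ the formula $P_U^{\perp}XU\,\operatorname{adj}(S)/\pi(S)$ is a ratio of continuous functions with nonvanishing denominator, hence continuous, and it agrees with $\widetilde F$ on $\mathcal{O}\cap\M_r$. Thus it is the desired continuous extension of $\widetilde F$ across $\mathcal{N}_{Z_\#}$; ranging over all $Z_\#\in\mathcal{S}_{r-1}$ gives a continuous extension to $\mathcal{S}_{r-1}$. For $\widetilde H$ the extension is immediate since $\sigma_{\min}(\cdot)$ is continuous on all of $\S_r$.

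It is also worth recording the extended values, which will be used later. For a corank-one symmetric matrix the spectral identity $\operatorname{adj}(A)=\sum_i\bigl(\prod_{j\neq i}\mu_j\bigr)q_iq_i^*$ collapses to $\operatorname{adj}(S_\#)=\det(D_1)\,\Pi_\#$, where $\Pi_\#=P_\#e_re_r^*P_\#^*$ is the rank-one spectral projector onto $\ker S_\#$ and $e_r\in\R^r$ is the last standard basis vector. With $U_\#=(U_1,U_3)P_\#^*$ this gives $U_\#\Pi_\#=U_3e_r^*P_\#^*$, and since $U_3\perp U_x$ while $X=U_xD_xU_x^*$ we have $XU_3=0$. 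Therefore $\widetilde F(U_\#,S_\#)=P_{U_\#}^{\perp}XU_\#\,\Pi_\#=P_{U_\#}^{\perp}(XU_3)\,e_r^*P_\#^*=0$ and $\widetilde H(U_\#,S_\#)=(-S_\#+U_\#^*XU_\#)\cdot 0=0$, so both functions extend by the constant $0$ on $\mathcal{S}_{r-1}$; in particular $\mathcal{S}_{r-1}$ consists of equilibria of the rescaled flow.

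The step I expect to be the crux is the nonvanishing of $\pi$ near $\mathcal{S}_{r-1}$ in the second paragraph: it relies on the vanishing eigenvalue of $S_\#$ being \emph{simple}, which holds precisely because points of $\mathcal{S}_{r-1}$ have corank exactly one. This is also the point at which the argument would break for lower-rank spurious critical points $\mathcal{S}_s$ with $s<r-1$: there $S_\#$ has a zero eigenvalue of multiplicity $r-s\ge 2$, several eigenvalues of $S$ collapse together, and the single scalar rescaling by $\sigma_{\min}(S)$ no longer suffices to produce a finite, well-defined limit.
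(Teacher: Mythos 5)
Your proof is correct, and it reaches the same extended values (both $\widetilde F$ and $\widetilde H$ extend by $0$ on $\mathcal{S}_{r-1}$), but by a genuinely different route than the paper. The paper works spectrally: it decomposes $S=P\Sigma P^*$, invokes the $\sin\Theta$ theorem to separate the smallest eigenvalue $\sigma_r$ from the rest near $S_\#$, and then computes $\lim_{S\to S_\#}\varphi(S)=\lim S^{-1}\sigma_{\min}(S)=p_{\#,r}p_{\#,r}^*$ directly from the diagonalized form before killing the remaining factor with $U_3\perp U_x$ (the paper uses $P_{U_\#}^\perp X=U_2D_2U_2^*\,(\cdot)$ and $U_2\perp(U_1,U_3)$, where you use $XU_3=0$; these are equivalent). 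You instead cancel the singularity algebraically via $S^{-1}\sigma_{\min}(S)=\operatorname{adj}(S)/\pi(S)$, with $\operatorname{adj}$ polynomial and $\pi$ (the product of the $r-1$ largest eigenvalues) continuous and nonvanishing near $\mathcal{N}_{Z_\#}$ because the zero eigenvalue of $S_\#$ is simple. Your version buys something slightly stronger and cleaner: a single closed-form expression that is jointly continuous on a full neighborhood of $\mathcal{N}_{Z_\#}$ in $\St(n,r)\oplus\S_r$ (rather than only existence of limits along sequences in $\M_r$), with no appeal to the Davis--Kahan lemma or to eigenvector continuity, and it makes explicit where corank one enters (simplicity of the zero eigenvalue $\Rightarrow$ $\pi(S_\#)=\det(D_1)>0$), which matches the paper's later discussion of why the argument stops at $\mathcal{S}_{r-1}$. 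What the paper's spectral route buys in exchange is that its intermediate objects ($p_r$, the separation of $\sigma_r$, $\nabla\sigma_r(S)[\eta]=p_r^*\eta p_r$) are exactly the tools reused in the subsequent $C^1$-extension (Lemma \ref{lemma:C1extension}) and in the strict-saddle computation, so the two proofs share notation and machinery; if you wanted to continue to the $C^1$ statement with your formulation, you would either differentiate $\operatorname{adj}(S)/\pi(S)$ (where eigenvalue simplicity and the distinct-eigenvalue assumption on $X$ would reappear) or switch back to the spectral picture at that point.
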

\begin{proof}
    Take any $Z_\# \in\mathcal{S}_{r-1}$ with parameterization $Z_\# = U_\#S_\#U_\#^*$. It suffices to show that $\lim_{Z\to Z_\#} \widetilde{F} (U,S)$ and $ \widetilde{H}(U,S) $ exist, and are independent of the specific choices of parameterization.
    
    Let $S = P \Sigma P^*$ and $S_\# = P_\# \Sigma_\# P_\#^*$ be the eigenvalue decompositions of $S$ and $S_\#$ respectively. Denote $p_i = P(:,i)$, and $p_{\#,i} = P_\#(:,i)$.
    Assume that $X=U_xD_xU_x^* = \sum_{i=1}^r d_i u_i u_i^*$. Since $Z_\# \in \mathcal{S}_{r-1}$, from the previous subsection, we know that
    \begin{align*}
        \Sigma_\# =
        \begin{pmatrix}
            D_{1} & 0 \\ 0 & 0
        \end{pmatrix},
    \end{align*}
    where $D_{1}$ is an $(r-1)\times(r-1)$ diagonal matrix, $D_{1} = \text{diag}\{d_{1}, \ldots, d_{r-1}\}$. Moreover, when $\|S-S_\#\|_F < \epsilon$ for small enough $\epsilon$, by the sin$\Theta$ theorem (Lemma \ref{lemma:sin_theta}), we have 
    \begin{align*}
        \Sigma = \text{diag}\{\sigma_1, \ldots, \sigma_{r-1}, \sigma_r\},
    \end{align*}
    where 
    \begin{align*}
        &\sigma_j > \text{min}\{d_{1}. \ldots, d_{r-1}\}-\epsilon, \quad 1\le j\le r-1;\\
        &0\le \sigma_r < \epsilon.
    \end{align*}
    In other words, $\sigma_r$ and the rest of the eigenvalues of $S$ are well-separated. Thus, when $\epsilon$ is small enough, we always have $\sigma_\text{min}(S) = \sigma_r$. 
    
    Consider $\varphi(S):= S^{-1}\sigma_\text{min}(S)$. When $\|S-S_\#\|_F < \epsilon$, we have 
    \begin{align*}
        \varphi(S) &= P \cdot \text{diag}\{\sigma_1^{-1}, \ldots, \sigma_{r-1}^{-1}, \sigma_r^{-1}\} \cdot  P^* \cdot \sigma_r \\
        &= P \cdot \text{diag}\left\{\frac{\sigma_r}{\sigma_1}, \ldots, \frac{\sigma_r}{\sigma_{r-1}}, 1\right\} \cdot P^* \\
        &= P \cdot \text{diag}\left\{\frac{\sigma_r}{\sigma_1}, \ldots, \frac{\sigma_r}{\sigma_{r-1}}, 0\right\} \cdot  P^* + p_r p_r^*.
    \end{align*}
    Thus,
    \begin{align*}
        \lim_{S\to S_\#}\varphi(S) &= \lim_{S\to S_\#} \left(P \cdot \text{diag}\left\{\frac{\sigma_r}{\sigma_1}, \ldots, \frac{\sigma_r}{\sigma_{r-1}}, 0\right\} \cdot P^* + p_r p_r^* \right )\\
        &= 0 \,\, + \,\, p_{\#,r} p_{\#,r}^* \\
        &= p_{\#,r} p_{\#,r}^*.
    \end{align*}
    In other words, $\varphi(S)$ can be continuously extended to $S_\#$.
    
    We can now compute the limits of $\widetilde{F}$ and $\widetilde{H}$. Note that 
    \begin{align*}
        \widetilde{F}(U,S) &= P_U^\perp X U \cdot \varphi(S).
    \end{align*}
    Using the parameterization  
    \begin{align*}
        Z_\# = U_\#S_\#U_\#^*: \quad U_\# = (U_{1}, U_{3})P_\#^*,  \quad S_\# = P_\#
        \begin{pmatrix}
            D_{1} & 0 \\ 0 & 0
        \end{pmatrix}
        P_\#^*,
    \end{align*}
    we have
    \begin{align*}
        \lim_{(U,S) \to (U_\#,S_\#)} \widetilde{F}(U,S) &= P_{U_\#}^\perp X U_\#\cdot \lim_{S\to S_\#} \varphi(S) \\
        &= P_{U_\#}^\perp X U_\#\cdot p_{\#,r} p_{\#,r}^* \\
        &= (I - P_{U_1} - P_{U_3}) \cdot (U_1 D_1 U_1^* + U_2 D_2 U_2^*) \cdot (U_{1}, U_3)P_\#^* \cdot p_{\#,r} p_{\#,r}^* \\
        &=U_2 D_2 U_2^* \cdot (U_{1}, U_3) \cdot P_\#^* \cdot p_{\#,r} p_{\#,r}^* \\
        &= 0
    \end{align*}
    As for $\widetilde{H}(U,S)$, since $H(U,S)$ is bounded and $\sigma_\text{min}(S)$ converges to zero, we have
    \begin{align*}
        \lim_{(U,S) \to (U_\#,S_\#)} \widetilde{H}(U,S) = \lim_{(U,S) \to (U_\#,S_\#)} H(U,V,S)\cdot \sigma_{\text{min}}(S) = 0.
    \end{align*}
    Thus,  $ \widetilde{F}(U,S)$ and $ \widetilde{H}(U,S) $ can both be extended continuously to $\mathcal{S}_{r-1}$, independent of the parameterization.
\end{proof}

\subsection{Critical points of the rescaled system}
\label{sec:critical_rescaled}
In this section, we show that the ODE systems (\ref{DLRA}) and (\ref{DLRA*}) have the same critical points.

\zzy{
\begin{lemma}[Existence of rescaled gradient flow]
    Consider the rescaled ODE system (\ref{DLRA*}). Let $Z_0 \in \M_r$ be the initialization of the gradient flow at time $T=0$, and $U_0 \in \St(n,r)$, $S_0 \in \S_r$ nonsingular such that $Z_0 = U_0 S_0 U_0^\top$. Then there exists a unique gradient flow that satisfies  (\ref{DLRA*}) for all $T \in [0,\infty)$.
\end{lemma}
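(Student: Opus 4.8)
The plan is to read (\ref{DLRA*}) as an ODE on the smooth manifold $\M := \St(n,r)\oplus\S_r$ and to argue via the standard continuation principle. First I would establish local existence and uniqueness on the open subset $\M^\circ := \{(U,S)\in\M:\, S \text{ invertible}\}$, which contains the initial point: $Z_0\in\M_r$ means $Z_0\succcurlyeq 0$ with rank $r$, so $S_0 = U_0^* Z_0 U_0 \succcurlyeq 0$, and nonsingularity of $S_0$ forces $S_0\succ 0$. Then I would show that the maximal trajectory never leaves a fixed compact subset of $\M^\circ$, which forces the maximal existence time to be $+\infty$ and, together with local uniqueness, yields uniqueness on $[0,\infty)$.

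For the local step, write the right-hand sides of (\ref{DLRA*}) as $\widetilde F(U,S) = (P_U^\perp X U)\,S^{-1}\,\sigma_{\text{min}}(S)$ and $\widetilde H(U,S) = (-S+U^*XU)\,\sigma_{\text{min}}(S)$. On $\M^\circ$ each factor is locally bounded and locally Lipschitz: $(U,S)\mapsto P_U^\perp XU$ and $(U,S)\mapsto -S+U^*XU$ are polynomial, $S\mapsto S^{-1}$ is smooth where $S$ is invertible, and $S\mapsto\sigma_{\text{min}}(S)$ is globally $1$-Lipschitz by Weyl's inequality. A product of locally bounded, locally Lipschitz maps is locally Lipschitz, so $\widetilde F,\widetilde H$ are locally Lipschitz on $\M^\circ$, and Picard--Lindel\"of (in a chart of $\M$) yields a unique maximal solution $(U(t),S(t))$ on $[0,T^*)$. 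This solution stays on $\M$: $\widetilde H\in\S_r$ automatically, and $P_U^\perp U=0$ gives $\dot U^*U = 0$, hence $\frac{\rd}{\rd t}(U^*U)=0$ and $U(t)\in\St(n,r)$.

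For the a priori bounds, the crucial estimate is a strictly positive lower bound on $\sigma_{\text{min}}(S(t))$, which in particular keeps $S(t)$ invertible. On any subinterval where $S\succ 0$ one has $\sigma_{\text{min}}(S)>0$, and $X\succcurlyeq 0$ (so $U^*XU\succcurlyeq0$) gives $\dot S=(-S+U^*XU)\,\sigma_{\text{min}}(S)\succcurlyeq -S\,\sigma_{\text{min}}(S)$. Following the Gr\"onwall-type computation in the proof of Lemma \ref{lemma:DLRA_existence} --- differentiating the sum of the $j$ smallest eigenvalues of $S$, where $j$ is the multiplicity of $\sigma_{\text{min}}(S)$, to cope with the non-differentiability of $t\mapsto\sigma_{\text{min}}(S(t))$ --- this yields, in the sense of Dini derivatives,
\begin{align*}
    \frac{\rd}{\rd t}\,\sigma_{\text{min}}(S)\ \ge\ -\,\sigma_{\text{min}}(S)^2 ,
\end{align*}
hence $\sigma_{\text{min}}(S(t))\ge\sigma_{\text{min}}(S_0)/(1+\sigma_{\text{min}}(S_0)\,t)>0$; since $S_0\succ 0$, a standard bootstrap extends this bound to all of $[0,T^*)$, so $S(t)\succ 0$ throughout. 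An analogous one-sided estimate for the largest eigenvalue, $\frac{\rd}{\rd t}\sigma_1(S)\le(\|X\|-\sigma_1(S))\,\sigma_{\text{min}}(S)$, gives $\sigma_1(S(t))\le\max\{\sigma_1(S_0),\|X\|\}$. Since $U(t)\in\St(n,r)$ is bounded automatically, $(U(t),S(t))$ stays in a fixed compact subset of $\M^\circ$; by the continuation theorem a finite $T^*$ would force it to leave every compact subset of $\M^\circ$, so $T^*=+\infty$.

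I expect the only genuine obstacle to be the non-smoothness of $t\mapsto\sigma_{\text{min}}(S(t))$ at times where the smallest eigenvalue is degenerate: the differential inequality above has to be justified through Dini derivatives, or equivalently through the smooth evolution of the sum of the $j$ smallest eigenvalues, exactly as in the proof of Lemma \ref{lemma:DLRA_existence}. Everything else is routine.
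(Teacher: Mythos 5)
Your proposal is correct and follows essentially the same route as the paper: reduce the problem to local Lipschitz continuity of $\widetilde{F},\widetilde{H}$ on the region where $S$ is invertible, and rule out finite-time degeneration of $\sigma_{\min}(S)$ via the eigenvalue-sum (multiplicity-$j$) differential inequality $\frac{\rd}{\rd t}\sigma_{\min}(S)\ge-\sigma_{\min}(S)^2$ followed by a Gr\"onwall-type bound, exactly as in the proof of Lemma \ref{lemma:DLRA_existence}. Your explicit hyperbolic lower bound $\sigma_{\min}(S(t))\ge\sigma_{\min}(S_0)/(1+\sigma_{\min}(S_0)t)$ and the additional upper bound on $\sigma_1(S)$ merely make the continuation/compactness step more explicit than the paper's write-up.
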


\begin{proof}
    The proof follows the same idea as that of Lemma \ref{lemma:DLRA_existence}. {We show that within finite time, $(U,S)$ remains in a region where $\widetilde{F}$ and $\widetilde{H}$ are Lipschitz continuous.} Note that $\nabla_{S_{ij}}(S^{-1}) = -S^{-1} E_{ij} S^{-1}$ where $E_{ij}$ is the indicator matrix of the $(i,j)$-entry. Note also that the smallest eigenvalue $\sigma_\text{min}(S)$ is Lipschitz continuous with respect to $S$ \cite{kangal2018subspace}. Thus the Lipschitz continuity of $\widetilde{F}$ and $\widetilde{H}$ holds if $S^{-1}$ is bounded. This is true if $\sigma_\text{min}(S)$ is bounded from below. 
    
    At a given time $T$, let the multiplicity of $\sigma_\text{min}(S)$ be $j$, i.e., $\sigma_{r-j}(S) > \sigma_{r-j+1}(S) = \ldots = \sigma_{r}(S)$. Denote $P_{U_{(r-j+1) \text{ to }r}}$ as the projection onto the corresponding eigen subspace. Using a similar argument as in \cite{magnus1985differentiating}, we now have
    \begin{align*}
        \frac{\rd}{\rd t} \left(\sum_{l=r-j+1}^r \sigma_{l}(S) \right) &= \text{tr}\left( P_{U_{(r-j+1) \text{ to }r}} \cdot \frac{\rd}{\rd t} S\right) \\
        &= \text{tr}\left( P_{U_{(r-j+1) \text{ to }r}} \cdot (-S + U^* X U)\cdot \sigma_{\text{min}}(S) \right) \\
        &\ge \text{tr}\left( P_{U_{(r-j+1) \text{ to }r}} \cdot (-S) \right) \cdot \sigma_{\text{min}}(S)\\
        &= -\left(\sum_{l=r-j+1}^r \sigma_{l}(S)\right)\cdot \sigma_{\text{min}}(S).
    \end{align*}
    In particular, when $\sigma_r(S)$ is a simple eigenvalue, this reduces to
    \begin{align*}
        \frac{\rd}{\rd t} \sigma_r(S) \ge -\sigma_r(S)^2.
    \end{align*}
    Thus $\sigma_{\text{min}}(S)$ decays no faster than geometrically due to Gr\"onwall's inequality. Thus it is bounded from below in any finite time interval.
\end{proof}
}

\begin{lemma}[Limit points]
\label{Lemma:limit}
    Let $Z_0\in\M_r$. Then the critical points of the ODE system (\ref{DLRA*}) are the same as those of (\ref{DLRA}). Moreover, the gradient flows starting from the same initial point always converge to the same limit point.
\end{lemma}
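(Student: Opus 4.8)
The plan is to treat the two claims separately. The first is immediate: on $\M_r$ the matrix $S$ is nonsingular, so $\sigma_{\text{min}}(S)>0$ there, and since $(\widetilde F,\widetilde H)=\sigma_{\text{min}}(S)\cdot(F,H)$, the pairs $(F,H)$ and $(\widetilde F,\widetilde H)$ have exactly the same zero set inside $\M_r$. Hence (\ref{DLRA}) and (\ref{DLRA*}) share the same critical points in $\M_r$. (On the boundary stratum $\mathcal{S}_{r-1}$ the continuous extension of Lemma \ref{lemma:C0extension} makes every point a critical point of (\ref{DLRA*}), but these are precisely the spurious points lying outside $\M_r$.)

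For the second claim, the key observation is that (\ref{DLRA*}) is just (\ref{DLRA}) run through a strictly positive time reparameterization, so the two flows have the same orbit. Concretely, let $(U(s),S(s))$, $s\in[0,\infty)$, be the solution of (\ref{DLRA}) from $(U_0,S_0)$, which exists for all $s$ by Lemma \ref{lemma:DLRA_existence}. Since $\sigma_{\text{min}}(S(\cdot))$ is continuous and strictly positive, $t(s):=\int_0^s \sigma_{\text{min}}(S(s'))^{-1}\,\rd s'$ defines a $C^1$, strictly increasing function that is finite for every finite $s$. Writing $s(\cdot)=t^{-1}(\cdot)$ and $\widetilde U(t):=U(s(t))$, $\widetilde S(t):=S(s(t))$, the chain rule together with $s'(t)=\sigma_{\text{min}}(S(s(t)))=\sigma_{\text{min}}(\widetilde S(t))$ gives $\dot{\widetilde U}=\widetilde F(\widetilde U,\widetilde S)$, $\dot{\widetilde S}=\widetilde H(\widetilde U,\widetilde S)$ with $(\widetilde U(0),\widetilde S(0))=(U_0,S_0)$; by uniqueness of the rescaled gradient flow established above, $(\widetilde U,\widetilde S)$ is exactly the solution of (\ref{DLRA*}) from $(U_0,S_0)$. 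Thus the orbits of (\ref{DLRA}) and (\ref{DLRA*}) through $(U_0,S_0)$, and hence the corresponding Riemannian gradient flow orbits $Z=USU^*$ on $\M_r$, coincide as point sets.

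The only real obstacle is to show that this reparameterization does not squeeze all of $s\in[0,\infty)$ into a finite $t$-interval, i.e. that $t(s)\to\infty$ as $s\to\infty$; otherwise the $t\to\infty$ limit of (\ref{DLRA*}) would only see a finite portion of the (\ref{DLRA}) orbit. I would establish this through an a priori upper bound on $\sigma_{\text{min}}(S(s))$ along the flow: since $f(Z(s))=\frac12\|Z(s)-X\|_F^2$ is nonincreasing along the gradient flow, $\|Z(s)\|_2\le\|X\|_2+\|Z(s)-X\|_F\le\|X\|_2+\|Z_0-X\|_F=:C$, and because $Z(s)=U(s)S(s)U(s)^*$ with $U(s)^*U(s)=I_r$, the eigenvalues of $S(s)$ are among those of $Z(s)$, so $\sigma_{\text{min}}(S(s))\le\|S(s)\|_2=\|Z(s)\|_2\le C$ for all $s$ (alternatively, bound $\text{tr}(S(s))$ by Gr\"onwall from $\frac{\rd}{\rd s}\text{tr}(S)=-\text{tr}(S)+\text{tr}(U^*XU)\le-\text{tr}(S)+r\|X\|_2$). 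Then the integrand defining $t(s)$ is at least $1/C$, so $t(s)\ge s/C\to\infty$, making $t:[0,\infty)\to[0,\infty)$ an increasing bijection; hence $s(t)\to\infty$ as $t\to\infty$ and $\lim_{t\to\infty}(\widetilde U(t),\widetilde S(t))=\lim_{s\to\infty}(U(s),S(s))$ in the sense that one exists iff the other does, and then they are equal. Pushing this forward through $Z=USU^*$ yields that the two gradient flows from a common initialization converge to the same limit point. (That the common orbit does converge, rather than merely having a single $\omega$-limit set, can be supplied separately --- e.g. from the Lojasiewicz inequality for the polynomial $f$ on the bounded orbit --- but is not needed for the equality of limits asserted here.)
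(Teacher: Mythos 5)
Your proposal is correct, and it follows the same underlying strategy as the paper: the rescaled system (\ref{DLRA*}) traces the same orbit as (\ref{DLRA}) because it differs only by the positive scalar $\sigma_{\text{min}}(S)$, and the only danger is a mismatch of the two time scales at infinity. The difference is in execution. The paper simply asserts that the two flows ``follow the same path'' and then excludes the bad scenario ($\widetilde{Z}_\infty = Z_T$ for finite $T$) by a dichotomy at the rescaled limit: either $F=H=0$ there, making $Z_T$ stationary for (\ref{DLRA}), or $\sigma_{\text{min}}(S)=0$ there, which Lemma \ref{lemma:DLRA_existence} rules out at finite time. You instead make the orbit identification rigorous by constructing the explicit time change $t(s)=\int_0^s \sigma_{\text{min}}(S(s'))^{-1}\,\rd s'$ and invoking uniqueness of the rescaled flow, and you exclude time compression by the a priori bound $\sigma_{\text{min}}(S(s))\le \|Z(s)\|_2\le \|X\|_2+\|Z_0-X\|_F$ (monotonicity of $f$ along the flow plus the fact that the eigenvalues of $S$ are eigenvalues of $Z$), giving $t(s)\ge s/C\to\infty$; finiteness of $t(s)$ for finite $s$ handles the opposite degeneracy and rests, as in the paper, on the lower bound of Lemma \ref{lemma:DLRA_existence}. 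Your route buys a cleaner, fully explicit justification of the step the paper leaves informal, at the cost of the extra (easy) boundedness estimate; both arguments treat the existence of the limits themselves at the same level of rigor (the paper via coercivity, you by deferring to \L{}ojasiewicz, which is indeed not needed for the equality claim). One cosmetic caution: in your parenthetical on the first claim, $\mathcal{S}_{r-1}$ is the set of rank-$(r-1)$ spurious critical points, not the whole stratum $\M_{r-1}$; the $C^0$-extension of Lemma \ref{lemma:C0extension} vanishes only at those points, so keep the wording restricted to $\mathcal{S}_{r-1}$ as defined.
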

\zzy{
\begin{proof}
    Observe that the rescaled system (\ref{DLRA*}) is just the original system (\ref{DLRA}) multiplied by a scalar: 
    \begin{align*}
        \widetilde{F}(U,S) &= F(U,S) \cdot \sigma_\text{min}(S), \\
        \widetilde{H}(U,S) &= H(U,S) \cdot \sigma_\text{min}(S).
    \end{align*}
    Thus the gradient flow of the rescaled system follows the same path as the original system. In other words, let $Z_t \,(t\ge 0)$ and $\widetilde{Z}_t \,(t\ge 0)$ be the solutions of (\ref{DLRA}) and (\ref{DLRA*}) starting from the same initial point $Z_0$, then for any time $t\ge 0$, there exists a corresponding time $w \ge 0$ such that $\widetilde{Z}_t = Z_w$. 
    
    When the time goes to infinity, both flows have limit points because both are minimizing flows of a coercive and lower-bounded function $f(Z)$. Denote them as $\widetilde{Z}_\infty$ and $Z_\infty$ respectively. Then either $\widetilde{Z}_\infty = Z_\infty$, or there exists a finite $T$ such that $\widetilde{Z}_\infty = Z_T$. 
    
    We now argue that only $\widetilde{Z}_\infty = Z_\infty$ is possible.
    Looking at the ODE system (\ref{DLRA*}), a critical point has to satisfy either $F(U,S)=H(U,S)=0$, or $\sigma_\text{min}(S) = 0$. In the former case,  $F(U,S)=H(U,S)=0$ means such $(U,S)$ is stationary for (\ref{DLRA}), so $\widetilde{Z}_\infty = Z_\infty$. In the latter case, such $(U,S)$ has to be $Z_\infty$ because we know that $\sigma_\text{min}(S)$ cannot be zero at any finite time from Lemma \ref{lemma:DLRA_existence}.
    So either way, $\widetilde{Z}_\infty = Z_\infty$. Therefore, the critical points of (\ref{DLRA*}) could only be those of (\ref{DLRA}). 
\end{proof}
}

By Lemma \ref{Lemma:limit}, if we can prove that gradient flows of (\ref{DLRA*}) starting from random initializations almost surely avoids the spurious critical points in $\mathcal{S}_{r-1}$, we immediately have that the same results apply to (\ref{DLRA}). In the next subsection, we will show that this is much easier to prove for the rescaled system than for the original system, because the points in $\mathcal{S}_{r-1}$ are now {strict saddle points in the classical sense}.

\subsection{Landscape around the critical points}
\label{sec:landscape}
We now analyze the landscape around the critical points of (\ref{DLRA*}).
In fact, we will show that the $C^0$-extension that we proved in Lemma \ref{lemma:C0extension} can be improved to a $C^1$-extension.

\begin{lemma}[$C^1$-extension]
\label{lemma:C1extension}
    Assume that the eigenvalues of the ground truth matrix $X$ are all distinct. The functions $ \widetilde{F}(U,S) $ and  $ \widetilde{H}(U,S) $ can be $C^1$-extended to $\mathcal{S}_{r-1}$.
\end{lemma}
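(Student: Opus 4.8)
The plan is to prove the $C^1$-extension by computing the Jacobian of $(\widetilde F,\widetilde H)$ near a spurious point $Z_\#\in\mathcal S_{r-1}$ and showing it converges to a finite limit independent of the parameterization as $(U,S)\to(U_\#,S_\#)$. The decisive structural fact, already isolated in the proof of Lemma \ref{lemma:C0extension}, is that in a small neighborhood of $S_\#$ the smallest eigenvalue $\sigma_{\min}(S)=\sigma_r$ is simple and well-separated from $\sigma_1,\dots,\sigma_{r-1}$; hence $\sigma_r$, its eigenprojector $p_rp_r^*$, and the map $\varphi(S)=S^{-1}\sigma_{\min}(S)$ are all \emph{real-analytic} functions of $S$ on that neighborhood (standard perturbation theory for a simple eigenvalue). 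So $\widetilde F(U,S)=P_U^\perp X U\,\varphi(S)$ and $\widetilde H(U,S)=(-S+U^*XU)\,\sigma_{\min}(S)$ are already $C^\infty$ on $\M_r$ near $Z_\#$; the only issue is whether the derivatives extend continuously \emph{across} $\mathcal S_{r-1}$, i.e. whether $\nabla\widetilde F,\nabla\widetilde H$ have limits as $\sigma_r\to 0$.

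First I would record the first-order expansion of $\varphi$. Writing $S=S_\#+\delta S$ with $\delta S$ small, and splitting $\delta S$ into its block components relative to the $(r-1)+1$ eigenspace decomposition of $S_\#$, one has $\sigma_r(S)=p_{\#,r}^*\,\delta S\,p_{\#,r}+O(\|\delta S\|^2)$ and $p_r(S)p_r(S)^*=p_{\#,r}p_{\#,r}^*+O(\|\delta S\|)$, while $P\operatorname{diag}(\sigma_r/\sigma_1,\dots,\sigma_r/\sigma_{r-1},0)P^*=O(\sigma_r)=O(\|\delta S\|)$. Differentiating $\varphi(S)=P\operatorname{diag}(\sigma_r/\sigma_1,\dots,\sigma_r/\sigma_{r-1},0)P^*+p_rp_r^*$, every term is a smooth function of $S$ with a finite limit: the derivative of $p_rp_r^*$ is the usual simple-eigenvector perturbation formula (bounded because of the spectral gap), and the derivative of the first term is $O(1)$ since the eigenvalue ratios $\sigma_r/\sigma_j$ are differentiable with bounded derivative in the gap region. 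Plugging these into $\nabla_{(U,S)}\widetilde F$ and $\nabla_{(U,S)}\widetilde H$ via the product and chain rules, and using $\lim_{S\to S_\#}\varphi(S)=p_{\#,r}p_{\#,r}^*$ together with the computation from Lemma \ref{lemma:C0extension} that $P_{U_\#}^\perp XU_\#\,p_{\#,r}=0$ (which kills the potentially largest term), each block of the Jacobian converges. Finally I would check parameterization-independence: the limiting Jacobian must be expressed through $U_1,U_3,P_\#$ only via quantities invariant under the residual gauge freedom $U_3\mapsto U_3 O$, $P_\#\mapsto \cdot$, exactly as was done for the $C^0$ limit; invariance of the $C^0$ limit plus analyticity on $\M_r$ forces invariance of the derivative.

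The main obstacle, and the place I would spend the most care, is the term in $\nabla\widetilde F$ coming from differentiating the prefactor $P_U^\perp X U$ while keeping $\varphi(S)$, versus differentiating $\varphi(S)$ while keeping $P_U^\perp X U$. In the second group, $\partial_S\varphi$ is $O(1)$ but it multiplies $P_U^\perp X U$, which is $O(1)$ (not small) at a general nearby point — so this contributes a genuinely nonzero piece of the limiting derivative, and one must verify it has a limit rather than merely being bounded. In the first group, $\partial_U(P_U^\perp X U)$ multiplies $\varphi(S)\to p_{\#,r}p_{\#,r}^*$, which is fine. The subtlety is that the naive bound on $\partial_S(S^{-1})=-S^{-1}E_{ij}S^{-1}$ blows up like $\sigma_r^{-2}$; it is only after multiplying by $\sigma_{\min}(S)$ and using $\partial_S\sigma_{\min}$ that the $\sigma_r^{-1}$ singularities cancel in $\varphi=S^{-1}\sigma_{\min}$. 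Making this cancellation explicit — cleanest via the spectral formula $\varphi(S)=P\operatorname{diag}(\sigma_r/\sigma_j,\dots,0)P^*+p_rp_r^*$ rather than via $S^{-1}$ directly — is the technical heart of the argument, and I would organize the whole proof around that identity so that no expression with an uncanceled negative power of $\sigma_r$ ever appears.

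Once the Jacobian limits are established and shown gauge-invariant, continuity of the extended first derivatives on an open neighborhood of $\mathcal S_{r-1}$ in $\overline{\M_r}$ follows, giving the claimed $C^1$-extension; I would close by noting this is exactly what is needed to invoke the hyperbolic/strict-saddle machinery (Theorems \ref{thm:escapeGF}, \ref{thm:escape3}) on the rescaled system in the next lemma.
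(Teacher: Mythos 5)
Your plan follows essentially the same route as the paper's proof: work with $\varphi(S)=S^{-1}\sigma_{\text{min}}(S)$ through its spectral representation, use the simplicity and separation of $\sigma_r$ near $S_\#$ (guaranteed by the distinct-eigenvalue assumption and the $\sin\Theta$ argument already used for the $C^0$-extension) together with the simple-eigenvalue derivative formula $\nabla\sigma_r(S)[\eta]=p_r^*\eta p_r$ to show that $\nabla\varphi$ has a finite limit, and then assemble $\nabla\widetilde{F}$ and $\nabla\widetilde{H}$ by the product rule. The paper does exactly this, defining $\psi(S,\eta)=\nabla\varphi(S)[\eta]$ and computing its limit case by case on the basis $\{p_ip_j^*\}$, so your organization around the spectral identity for $\varphi$ is the same computation in different clothing.

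One claim in your discussion is wrong, although it does not sink the argument. You assert that $\partial_S\varphi$ multiplies $P_U^\perp XU$, ``which is $O(1)$ (not small) at a general nearby point,'' and therefore ``contributes a genuinely nonzero piece of the limiting derivative.'' In fact, at every parameterization $(U_\#,S_\#)\in\N_{Z_\#}$ of a rank-$(r-1)$ spurious critical point one has $P_{U_\#}^\perp X U_\# = 0$: the defining constraint $U_3\perp U_x$ gives $P_{U_\#}^\perp X = U_2D_2U_2^*$ and $U_2^* U_\# = 0$. Hence $P_U^\perp XU\to 0$ as $(U,S)\to(U_\#,S_\#)$, and the $S$-block of the limiting Jacobian of $\widetilde{F}$ is zero, exactly as the paper computes; the genuinely nonzero part of the limit sits in the $U$-block, which is what later produces the escape eigenvalue $d_r$ along the direction $\xi_1=u_rp_{\#,r}^*$ in the strict-saddle lemma. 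Your proof plan survives because what you actually need --- existence of $\lim_{S\to S_\#}\nabla\varphi(S)[\eta]$, which your spectral-identity computation delivers --- combined with the vanishing prefactor still yields a convergent (indeed zero) block; but the Jacobian structure you predict is off, and you would need to correct it before feeding the result into the landscape analysis of the next lemma. The closing appeal to ``invariance of the $C^0$ limit plus analyticity'' for parameterization-independence is also looser than necessary: as in the paper, once the limits are expressed through $U_1$, $U_3$, $P_\#$ and $p_{\#,r}$ they can be checked directly to be independent of the admissible choices.
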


\begin{proof}

    We first compute $\nabla F$ and $\nabla H$ in the interior of $\M_r$. In this region, $S$ is non-singular, and all the derivatives are well defined. Let $\xi = (\xi_1, \xi_2)$ be a placeholder for the directional derivative, where $\xi_1$ and $\xi_2$ correspond to the direction of $U$ and $S$ respectively. Direct computation gives 
    \begin{align*}
        \nabla F(U,S) [\xi] & = 
        \begin{pmatrix}
            -(U\xi_1^* + \xi_1 U^*) X U S^{-1} + P_U^\perp X \xi_1 (S^{-1})^* \\
            - P_U^\perp X U  S^{-1} \xi_2 S^{-1}
        \end{pmatrix}, \\
        \nabla H(U,S)[\xi] & = 
        \begin{pmatrix}
            \xi_1^* X U +  U^* X \xi_1 \\
            -\xi_2
        \end{pmatrix}.
    \end{align*}
    To extend $\nabla F$ and $\nabla H$ themselves to $\mathcal{S}_{r-1}$ is impossible: $S^{-1}$ is singular near $\mathcal{S}_{r-1}$, causing the derivatives to explode. We aim to show that it becomes possible with the rescaled system (\ref{DLRA*}). 
    
    For this purpose, we define the following function, which is the directional derivative of $\varphi(S)$ along the direction $\eta$:
    \begin{align*}
        \psi(S,\eta):=\nabla \varphi(S)[\eta] = \nabla (S^{-1}\sigma_\text{min}(S))[\eta].
    \end{align*}
    We follow the same notations as before. Direct computation gives
    \begin{align*}
        \lim_{S\to S_\#}\psi(S,\eta) =\lim_{S\to S_\#} \left( -S^{-1} \eta S^{-1} \sigma_\text{min}(S) + S^{-1} \cdot \nabla \sigma_\text{min}(S)[\eta] \right).
    \end{align*}
    We know from the proof of Lemma \ref{lemma:C0extension} that when $\|S-S_\#\|_F< \epsilon$ for small enough $\epsilon$, the larger eigenvalues $\sigma_1$ to $\sigma_{r-1}$ and the smallest eigenvalue $\sigma_r$ are well-separated. In fact, assuming that the eigenvalues of $X$ are distinct, for small enough $\epsilon$, all the eigenvalues of $S$ are well-separated, and the corresponding eigenvectors are continuous with respect to the change of $S$. In this case, we know from \cite{magnus1985differentiating} that 
    \begin{align*}
        \nabla \sigma_r(S)[\eta] = p_r^* \eta p_r.
    \end{align*}
    Thus, we have 
    \begin{align*}
        \lim_{S\to S_\#}\psi(S,\eta) =\lim_{S\to S_\#} \left( -S^{-1} \eta S^{-1} \sigma_r + S^{-1} p_r^* \eta p_r \right).
    \end{align*}
    For simplicity, we focus on the case $\F = \R$. Since $\{p_i p_j^*\}_{i,j = 1}^r$ form a complete orthogonal basis of $\R^{r\times r}$, we can write
    \begin{align*}
        \eta = \sum_{1\le i,j\le r} c_{ij} p_i p_j^*.
    \end{align*}
    Such decomposition is continuous around $S_\#$, since all $\sigma_i$'s are well-separated and all $p_i$'s are continuous with respect to the change of $S$. 
    
    It now suffices to compute $\lim_{S\to S_\#}\psi(S,\eta)$ for $\eta = p_i p_j^*$, as $\psi(S,\eta)$ is linear in $\eta$. This comes in the following cases:
    \begin{enumerate}[label = (\arabic*)]
        \item If $i,\,j<r$:
        \begin{align*}
            \lim_{S\to S_\#}\psi(S,p_i p_j^*) &= \lim_{S\to S_\#} \left( -S^{-1} p_i p_j^* S^{-1} \sigma_r + S^{-1} p_r^* p_i p_j^* p_r \right) \\
            &= \lim_{S\to S_\#} \left( -P \Sigma^{-1}e_i e_j^* \Sigma^{-1}\sigma_r P^* + S^{-1} \cdot 0 \right) \\
            &= \lim_{S\to S_\#} \left(-P \cdot 0 \cdot P^* + 0 \right)\\
            & = 0. 
        \end{align*}
        \item If $i<r$, $j=r$:
        \begin{align*}
            \lim_{S\to S_\#}\psi(S,p_i p_r^*) &= \lim_{S\to S_\#} \left( -S^{-1} p_i p_r^* S^{-1} \sigma_r + S^{-1} p_r^* p_i p_r^* p_r \right)\\
            &= \lim_{S\to S_\#} \left( -P \Sigma^{-1}e_i e_r^* \Sigma^{-1}\sigma_r P^* + S^{-1} \cdot 0 \right) \\
            &= d_i^{-1} p_i p_r^*.
        \end{align*}
        \item If $i=r$, $j<r$:
        Similar to the previous case,
        \begin{align*}
             \lim_{S\to S_\#}\psi(S, p_r p_j^*) = d_j^{-1} p_r p_j^*.
        \end{align*}
        \item If $i=j=r$:
        \begin{align*}
            \lim_{S\to S_\#}\psi(S,p_r p_j^*) &= \lim_{S\to S_\#} \left( -S^{-1} p_r p_r^* S^{-1} \sigma_r + S^{-1} p_r^* p_r p_r^* v_r \right) \\
            &=  \lim_{S\to S_\#}\left( -P \Sigma^{-1} e_r e_r^* \Sigma^{-1} \sigma_r  P^* + S^{-1}\right) \\
            &= \lim_{S\to S_\#}\left(P \cdot \text{diag}\left\{\sigma_1^{-1}, \ldots, \sigma_{r-1}^{-1},-\sigma_r^{-1}+\sigma_r^{-1}\right\} \cdot P^*\right) \\
            &= \lim_{S\to S_\#}\left(P \cdot \text{diag}\left\{\sigma_1^{-1}, \ldots, \sigma_{r-1}^{-1}, 0 \right\} \cdot P^*\right) \\
            &= P_\# \cdot \text{diag}\left\{\sigma_1^{-1}, \ldots, \sigma_{r-1}^{-1},0\right\} \cdot P_\#^*.
        \end{align*}
    \end{enumerate}
    Therefore, $\psi(S,\eta)$ can be continuously extended to $S_\#$ for any $\eta$. 
    
    We now compute the derivatives of $\widetilde{F}$ and $\widetilde{H}$ at $Z_\#$. The directional derivative in $U$ only involves $\varphi(S_\#)$, and we have
    \begin{align*}
        \lim_{Z\to Z_\#}\nabla_U \widetilde{F}(U,S)[\xi_1] &= \lim_{Z\to Z_\#} \left(-(U\xi_1^* + \xi_1 U^*) X U S^{-1}  \sigma_\text{min}(S) + P_U^\perp X \xi_1 S^{-1}  \sigma_\text{min}(S) \right)\\
        &= -(U_\# \xi_1^*  + \xi_1 (U_\#)^*)XU_\# \cdot \varphi(S_\#) + P_{U_\#}^\perp X \xi_1 \cdot \varphi(S_\#) \\
        &= -(U_\# \xi_1^*  + \xi_1 (U_\#)^*)XU_\# \cdot p_r p_r^* + P_{U_\#}^\perp X \xi_1 p_r p_r^*.
    \end{align*}
    As for the directional derivative in $S$, we now make use of $\psi(S_\#,\eta)$:
    \begin{align*}
        \lim_{Z\to Z_\#} \nabla_S \widetilde{F}(U,S)[\xi_2] &= \lim_{Z\to Z_\#} \nabla_S \left(P_U^\perp X U S^{-1} \right)[\xi_2] \\ 
        &= \lim_{Z\to Z_\#} \left( P_U^\perp X U \psi(S,\xi_2) \right) \\
        &= P_{U_\#}^\perp X U_\# \cdot \psi(S_\#,\xi_2).
    \end{align*}
    Since $P_{U_\#}^\perp X U_\#= 0$ and $\psi(S_\#,\xi_2)$ is bounded, we have 
    \begin{align*}
        \lim_{Z\to Z_\#} \nabla_S \widetilde{F}(U,S)[\xi_2] = 0 \cdot \psi(S_\#,\xi_2) = 0.
    \end{align*}
    Thus, the derivatives of $\widetilde{F}$ can be extended continuously to $Z_\#$, and we have 
    \begin{align*}
        \lim_{Z\to Z_\#} \nabla \widetilde{F}(U,S) &= 
        \begin{pmatrix}
            -(U_\# \xi_1^*  + \xi_1 (U_\#)^*)XU_\# \cdot p_r p_r^* + P_{U_\#}^\perp X \xi_1 \cdot p_r p_r^* \\
            0
        \end{pmatrix}.
    \end{align*}
    As for the derivative of $\widetilde{H}$, we have 
    \begin{align*}
        \lim_{Z\to Z_\#} \nabla \widetilde{H}(U,S)[\xi] = \lim_{Z\to Z_\#} 
        \begin{pmatrix}
            (\xi_1^* X U + U^*X\xi_1) \cdot \sigma_\text{min}(S) \\
            -\xi_2 \cdot \sigma_\text{min}(S) + (-S + U^* X U) \nabla_S \sigma_\text{min}(S)[\xi_2]
        \end{pmatrix}= \begin{pmatrix}
            0\\
            0
        \end{pmatrix}.
    \end{align*}
    Thus, we have shown that the derivatives of  $ \widetilde{F}(U,S)$ and $\widetilde{H}(U,S) $ can both be extended continuously to such $Z_\#$, which is equivalent to saying that the functions themselves can be $C^1$-extended to such $Z_\#$.
\end{proof}

The $C^1$-extension is crucial to the landscape analysis of the system (\ref{DLRA*}) at the submanifolds corresponding to the rank-$(r-1)$ spurious critical points. It enables us to compute the Jacobian right at those submanifolds, and determine its eigenvalues. We now show that those submanifolds are actually strict critical submanifolds of the system (\ref{DLRA*}).

\begin{lemma}[Strict critical submanifold]
\label{lemma:strict}
    Assume that the eigenvalues of the ground truth matrix $X$ are all distinct. Given a point $Z_\#\in\mathcal{S}_{r-1}$, let $\N_{Z_\#} = \{(U_\#,S_\#): \, U_\#S_\#U_\#^* = Z_\#\}$ be the submanifold after parameterization that corresponds to $Z_\#$. Then  $\N_{Z_\#}$ is a strict critical submanifold of the system (\ref{DLRA*}). 
\end{lemma}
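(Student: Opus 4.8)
The plan is to linearize the $C^1$-extended rescaled vector field $(\widetilde{F},\widetilde{H})$ at an arbitrary point $(U_\#,S_\#)\in\N_{Z_\#}$ and to exhibit a single eigendirection, transverse to $\N_{Z_\#}$, whose eigenvalue is a positive eigenvalue of $X$ — hence bounded below by $\sigma_{\text{min}}(X)>0$ uniformly over $\N_{Z_\#}$, and indeed over all of $\mathcal{S}_{r-1}$. As preparation I would note that by Lemma \ref{lemma:C0extension} the rescaled field vanishes identically on $\N_{Z_\#}$, so $\N_{Z_\#}$ is a critical submanifold of (\ref{DLRA*}); by Lemma \ref{lemma:submanifold} it is an embedded submanifold of $\M=\St(n,r)\oplus\S_r$; and by Lemma \ref{lemma:C1extension} the field is $C^1$ near $\N_{Z_\#}$. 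Since a $C^1$ vector field vanishing on a submanifold has a Jacobian that annihilates that submanifold's tangent space, the linearization $J:=D(\widetilde{F},\widetilde{H})|_{(U_\#,S_\#)}$ automatically kills $T_{(U_\#,S_\#)}\N_{Z_\#}$; what remains is to produce one transverse direction $\xi$ with $J[\xi]=\lambda\xi$ and $\lambda\ge\sigma_{\text{min}}(X)$.

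Next I would evaluate $J$ using the directional-derivative formulas established in the proof of Lemma \ref{lemma:C1extension}. At $(U_\#,S_\#)$ one has $\nabla\widetilde{H}\equiv 0$, and in $\nabla\widetilde{F}$ the term $(U_\#\xi_1^{*}+\xi_1 U_\#^{*})XU_\#\,p_{\#,r}p_{\#,r}^{*}$ drops out, because $U_\# p_{\#,r}=U_3$ and $U_3\perp\text{Col}(U_x)$ force $XU_\#p_{\#,r}=0$; likewise the critical-point identity $U_\#^{*}XU_\#=S_\#$ and $S_\# p_{\#,r}=0$ kill the $\xi_1 U_\#^{*}XU_\#\,p_{\#,r}p_{\#,r}^{*}$ piece. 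This should leave
\begin{align*}
    J[(\xi_1,\xi_2)] = \Big(\big(P_{U_\#}^{\perp}X\,\xi_1 p_{\#,r}\big)\,p_{\#,r}^{*},\ 0\Big),
\end{align*}
i.e.\ $J$ depends only on the single column $\eta:=\xi_1 p_{\#,r}$ of the $U$-perturbation, returning $P_{U_\#}^{\perp}X\eta$ in that column and a zero $S$-component.

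Then I would identify the unstable direction. Let $(u_\star,d_\star)$ be the eigenpair of $X$ not captured by $Z_\#$; because $\text{rank}(X)=r$, $X\succcurlyeq 0$, and $X$ has distinct eigenvalues, this pair is unique, $d_\star\ge\sigma_{\text{min}}(X)>0$, and $u_\star\perp\text{Col}(U_\#)$. Taking $\xi:=(u_\star p_{\#,r}^{*},0)$ gives $\xi_1 p_{\#,r}=u_\star$, and then $Xu_\star=d_\star u_\star$ together with $P_{U_\#}^{\perp}u_\star=u_\star$ yields $J[\xi]=(d_\star u_\star p_{\#,r}^{*},0)=d_\star\,\xi$, so $\xi$ is an eigenvector of $J$ with eigenvalue $d_\star$. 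I would then check that $\xi$ is transverse to $\N_{Z_\#}$: along $\N_{Z_\#}$ the slot of $U_\#$ multiplying $p_{\#,r}^{*}$ may only vary inside $\text{Col}(U_x)^{\perp}$ (while the other slots stay in $\text{Col}(U_\#)$), whereas $\xi$ moves that slot along $u_\star\in\text{Col}(U_x)$. Since $d_\star$ is one of the eigenvalues of $X$, it is constant on $\N_{Z_\#}$ and $\ge\sigma_{\text{min}}(X)$ throughout $\mathcal{S}_{r-1}$; with $c:=-\sigma_{\text{min}}(X)<0$, the normal linearization of (\ref{DLRA*}) has a uniformly bounded unstable direction (equivalently, writing $(\widetilde{F},\widetilde{H})$ as $-\sigma_{\text{min}}(S)$ times the gradient flow of $f$ pulled back to $\M$, the associated Hessian has an eigenvalue $\le c<0$), so $\N_{Z_\#}$ is a strict critical submanifold of (\ref{DLRA*}).

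The hard part will be the bookkeeping in the second step — confirming that $U_\#^{*}XU_\#=S_\#$ and $U_3\perp\text{Col}(U_x)$ really do annihilate every term of $\nabla\widetilde{F}$ except the single one above — and, in the third step, verifying carefully that the exhibited eigendirection is transverse to $\N_{Z_\#}$ rather than tangent to it (a tangent eigendirection would be vacuous, since $J$ already kills $T\N_{Z_\#}$). A secondary subtlety is that at points of $\mathcal{S}_{r-1}$ the factor $\sigma_{\text{min}}(S)$ degenerates, so (\ref{DLRA*}) is merely $C^1$ there and not literally a gradient flow; I would therefore argue with the linearization of the vector field directly, which is precisely the input required by the saddle-escape results (Theorem \ref{thm:escapeGF} and its submanifold analogue).
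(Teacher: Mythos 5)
Your proposal is correct and follows essentially the same route as the paper: use the $C^1$-extension formulas of Lemma \ref{lemma:C1extension} to evaluate the Jacobian of $(\widetilde{F},\widetilde{H})$ on $\N_{Z_\#}$ (where the $(U_\#\xi_1^*+\xi_1U_\#^*)XU_\#\,p_{\#,r}p_{\#,r}^*$ term dies because $U_\#p_{\#,r}=U_3\perp U_x$), and exhibit $\xi=(u_r p_{\#,r}^*,0)$ as an eigenvector with eigenvalue equal to the missing eigenvalue $d_r>0$, uniform over the submanifold. Your explicit transversality check and the observation that the Jacobian annihilates $T\N_{Z_\#}$ are small refinements of, not departures from, the paper's argument.
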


\begin{proof}
    The goal is to show that for any $(U_\#,S_\#) \in \N_{Z_\#}$, it is a hyperbolic point of the gradient flow with at least one escape direction, and all these points in $\N_{Z_\#}$ share a common escape direction perpendicular to the submanifold itself with a uniformly bounded eigenvalue.
    We will determine this escape direction by construction, using the results from the proof of Lemma \ref{lemma:C1extension}.
    
    Recall that $S = P \Sigma P^*$, $S_\# = P_\# \Sigma_\# P_\#^*$, and $X=U_xD_xU_x^* = \sum_{i=1}^r d_i u_i u_i^*$. Let 
    \begin{align*}
        \xi = (\xi_1, \, \xi_2), \quad \xi_1 = u_r p_{\#,r}^*,\quad \xi_2 = 0.
    \end{align*}
    Note that $X = U_1 D_1 U_1^* + U_2 D_2 U_2^*$, where $U_2 = u_r$, $D_2 = d_r$, and $U_2 D_2 U_2^*$ is the missing component in this spurious critical point $Z_\#$. In other words, we construct $\xi$ exactly along the direction of this missing component. Using this property, we have that
    \begin{align*}
        &\nabla_U\widetilde{F}(U,S)[\xi_1] \mid_{(U,S)=(U_\#,S_\#)} \\
        &= -(U_\# \xi_1^*  + \xi_1 U_\#^*)XU_\# \cdot p_{\#,r} p_{\#,r}^* + P_{U_\#}^\perp X \xi_1 \cdot p_{\#,r} p_{\#,r}^* \\
        &=  -(U_\# p_{\#,r} u_r^*  + u_r p_{\#,r}^* U_\#^*)X U_\#  \cdot p_{\#,r} p_{\#,r}^* + (I - P_{U_1}^\perp - P_{U_3}^\perp) X\cdot u_r p_{\#,r}^* \cdot p_{\#,r} p_{\#,r}^* \\
        &= -\left((0,U_3)u_r^* + u_r (0,U_3)^*\right) \left(U_1 D_1 U_1^* + U_2 D_2 U_2^* \right) \cdot U_\# p_{\#,r} p_{\#,r}^* + U_2 D_2 U_2^* \cdot u_r p_{\#,r}^* \\
        &=0 + d_r u_r u_r^* \cdot u_r p_{\#,r}^*\\
        &= d_r u_r p_{\#,r}^*,
    \end{align*}
    and
    \begin{align*}
        \nabla_S\widetilde{F}(U,S)[\xi_2] \mid_{(U,S)=(U_\#,S_\#)}  = 0.
    \end{align*}
    Thus,
    \begin{align*}
        \nabla \widetilde{F}(U,S)[\xi] \mid_{(U,S)=(U_\#,S_\#)} = d_r u_r p_{\#,r}^* + 0 = d_r u_r p_{\#,r}^*.
    \end{align*}
    Meanwhile,
    \begin{align*}
         \nabla \widetilde{H}(U,S)[\xi] \mid_{(U,S)=(U_\#,S_\#)} = 0.
    \end{align*}
    Putting everything together, we have
    \begin{align*}
        \nabla(\widetilde{F}, \widetilde{H})[\xi] &= d_r \cdot (u_r p_{\#,r}^*, 0) \\
        &= d_r \cdot \xi.
    \end{align*}
    This means that $\xi = (u_r p_{\#,r}^*, \,0)$ is an eigenvector of the Jacobian $\nabla(\widetilde{F}, \widetilde{H})$ with eigenvalue $d_r$, which is positive. 
    
    Thus, for every tuple $(U_\#,S_\#)$ in $\N_{Z_\#}$, we have found an escape direction with uniform eigenvalue. So $\N_{Z_\#}$ is a strict critical submanifold as desired.
\end{proof}

\subsection{Proof of the main result}
We now prove Theorem \ref{thm:GF} using the results from previous subsections.

\begin{proof}[\textbf{Proof of Theorem \ref{thm:GF}}]
\label{sec:proof_main}
    By Lemma \ref{lem: spurious_fp}, there are only finitely many spurious critical points in $\mathcal{S}_{r-1}$. By Lemma \ref{lemma:strict}, for each $Z_\# \in \mathcal{S}_{r-1}$, in the parameterized domain $\St(n,r)\oplus \S_{r}$, the corresponding submanifold $\N_{Z_\#}$ is a strict critical submanifold for the rescaled gradient flow. Since there are only finitely many of them, we can apply Theorem \ref{thm:escapeGF}. This implies that the rescaled gradient flow in the parameterized domain almost never converges to $\cup_{Z_\# \in \mathcal{S}_{r-1}}\N_{Z_\#}$. Thus the rescaled gradient flow in the original domain $\M_r$ also almost never converges to $\mathcal{S}_{r-1}$. By Lemma \ref{Lemma:limit}, the original gradient flow has the same limit as the rescaled gradient flow. Thus the original gradient flow enjoys the same result, i.e.,
    $\text{Prob }(\lim_{t\rightarrow\infty} Z_t\in S_{r-1})=0$.
\end{proof}

\section{\blue{Main result} for the gradient descent}
\label{sec:GD}
The previous section has focused on the gradient flow. In this section we derive the result for the gradient descent, namely the asymptotic escape of the Riemannian gradient descent algorithm from the spurious critical points in $\mathcal{S}_{r-1}$.

\begin{lemma}[\blue{Asymptotic escape of $\mathcal{S}_{r-1}$: gradient descent}]
\label{lemma:vary}
    \blue{Let $\M_r$ be the rank-$r$ SPSD matrix manifold. Consider $f(Z) = \frac{1}{2} \|Z-X\|_F^2$ where $X \in \M_r$ has distinct eigenvalues. Let $Z_0 \in \M_r$ be a random initialization, and $\{Z_k\}_{k=0}^{\infty}$ be the sequence generated by the following Riemannian gradient descent algorithm with varying step size:
    \begin{align}
    \label{eq:vary}
        Z_{k+1} = R\left(Z_k-\alpha\cdot \sigma_r(Z_k) \cdot P_{T_{Z_k}}\big(\nabla f(Z_k)\big)\right),
    \end{align}
    i.e. $\alpha_k = \alpha \cdot \sigma_r(Z_k)$, where $\sigma_r(Z_k)$ is the $r$-th eigenvalue of $Z_k$, and $\alpha>0$. Assume that $Z_k \in \M_r$ for any $k<+\infty$, i.e., the sequence stays inside $\M_r$ at any finite step. Then we have 
    \begin{align*}
        \text{Prob }(\lim_{k\rightarrow\infty} Z_k\in S_{r-1})=0.
    \end{align*}
    In particular, this holds true for arbitrarily large $\alpha>0$.}
\end{lemma}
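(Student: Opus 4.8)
\emph{Proof proposal.} The plan is to transport the argument behind Theorem~\ref{thm:GF} to discrete time, working again in the parameterized domain $\M := \St(n,r)\oplus\S_r$ with $\pi(U,S) = USU^*$ as in Lemma~\ref{lemma:submanifold}. The crucial observation is that the varying step size $\alpha_k = \alpha\cdot\sigma_r(Z_k)$ is the discrete analogue of the vector-field rescaling by $\sigma_{\text{min}}(S)$ that converts~(\ref{DLRA}) into~(\ref{DLRA*}): since $\sigma_r(Z_k) = \sigma_{\text{min}}(S_k)$ whenever $Z_k = U_kS_kU_k^*$ with $U_k\in\St(n,r)$ and $S_k\succ 0$, lifting the iteration~(\ref{eq:vary}) to the $(U,S)$ coordinates should produce a map $g:\M\to\M$ of the form $g(U,S) = (U,S) + \alpha\,(\widetilde F(U,S),\widetilde H(U,S)) + E(U,S)$, where $E$ is a retraction correction of size $O(\alpha^2\sigma_{\text{min}}(S)^2)$. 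First I would make this lifting precise: given $Z_k = U_kS_kU_k^*$, set $Z_{k+1} = R(Z_k - \alpha_k\grad f(Z_k))$ and choose $(U_{k+1},S_{k+1})$ to be a continuously varying rank-$r$ symmetric factorization of $Z_{k+1}$, which is available in a neighborhood of each $\N_{Z_\#}$ because there $Z_{k+1}$ stays close to $Z_\#$. Reusing the mechanism of Lemma~\ref{lemma:C1extension} that cancels the $S^{-1}$ singularity, together with smoothness of the truncated-SVD retraction $R$ at matrices whose $r$-th and $(r+1)$-th singular values are separated, I would show that $g$ extends to a $C^1$ map on a neighborhood of $\N_{Z_\#}$ in $\M$, with $g|_{\N_{Z_\#}} = \mathrm{id}$ (since $\grad f\to 0$ at $Z_\#$ by the discussion in Section~\ref{sec:parameterization}, and $R(Z_\#) = Z_\#$).

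Next I would compute $Dg$ at a point of $\N_{Z_\#}$ and produce an expanding direction. Since the step vanishes on $\N_{Z_\#}$, the correction $E$, being $O(\sigma_{\text{min}}(S)^2)$ there, contributes nothing to $Dg|_{\N_{Z_\#}}$, so $Dg|_{\N_{Z_\#}} = I + \alpha\,\nabla(\widetilde F,\widetilde H)|_{\N_{Z_\#}}$, which is well defined precisely thanks to the $C^1$-extension of Lemma~\ref{lemma:C1extension}. By Lemma~\ref{lemma:strict}, the direction $\xi = (u_r p_{\#,r}^*, 0)$, aligned with the missing eigencomponent $u_r u_r^*$ of $X$, satisfies $\nabla(\widetilde F,\widetilde H)[\xi] = d_r\,\xi$ with $d_r > 0$; hence $Dg[\xi] = (1 + \alpha d_r)\,\xi$, an eigenvalue strictly larger than $1$, uniformly over all $(U_\#,S_\#)\in\N_{Z_\#}$ and for \emph{every} $\alpha > 0$. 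This is exactly where ``arbitrarily large $\alpha$'' comes from: the rescaling forces the effective step size $\alpha\,\sigma_r(Z_k)$ to vanish as $Z_k\to\mathcal{S}_{r-1}$, so no upper bound on $\alpha$ is imposed near the spurious points. The only additional point to verify is that $g$ is a local diffeomorphism near $\N_{Z_\#}$, i.e.\ that $I + \alpha\,\nabla(\widetilde F,\widetilde H)$ is invertible there; this holds for all $\alpha$ outside an exceptional finite set, and one expects it for all $\alpha > 0$ once the full Jacobian is inspected. Together with $g|_{\N_{Z_\#}} = \mathrm{id}$, this shows each $\N_{Z_\#}$ is a strict (repelling) critical submanifold of the iteration map $g$.

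Finally I would invoke the discrete-time stable-manifold result behind Theorem~\ref{thm:escape3}, namely \cite[Theorem~III.7]{shub2013global}, whose proof uses only the $C^1$ local-diffeomorphism structure of the iteration and the presence of an eigenvalue of modulus greater than $1$, and not that $g$ be the gradient descent of a fixed $C^2$ function. Since $\mathcal{S}_{r-1}$ is finite by Lemma~\ref{lem: spurious_fp}, the family $\{\N_{Z_\#} : Z_\#\in\mathcal{S}_{r-1}\}$ consists of finitely many strict critical submanifolds of $g$, so the set of initializations $(U_0,S_0)$ whose $g$-orbit converges to $\cup_{Z_\#}\N_{Z_\#}$ has measure zero in $\M$. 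Pushing this down through $\pi$, equivalently noting that $\pi$ sends this measure-zero set to a measure-zero subset of $\M_r$ exactly as in the proof of Theorem~\ref{thm:GF}, yields $\text{Prob}(\lim_{k\to\infty} Z_k\in\mathcal{S}_{r-1}) = 0$. I expect the main obstacle to be the $C^1$-lifting step: one must show that $R(Z_k - \alpha_k\grad f(Z_k))$ admits a $C^1$ family of rank-$r$ symmetric factors up to and including $\N_{Z_\#}$, which requires simultaneously (i) taming the $S^{-1}$-type blow-up in the $U$-update via the $\sigma_r(Z_k)$ rescaling, as in Lemma~\ref{lemma:C1extension}, and (ii) controlling the truncated-SVD retraction near the nearly rank-deficient iterates, i.e.\ showing that the $r$-th and $(r+1)$-th singular values of $Z_k - \alpha_k\grad f(Z_k)$ stay separated, which should follow from $\alpha_k = \alpha\,\sigma_r(Z_k)$ together with $\grad f(Z_k)\to 0$ near $\mathcal{S}_{r-1}$, so that the rank-increasing part of the step is much smaller than $\sigma_r(Z_k)$ itself.
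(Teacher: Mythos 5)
Your plan hinges on the claim that the iteration (\ref{eq:vary}) lifts to a map $g(U,S)=(U,S)+\alpha\,(\widetilde F(U,S),\widetilde H(U,S))+E(U,S)$ on $\St(n,r)\oplus\S_r$ whose correction $E$ is $C^1$-negligible on $\N_{Z_\#}$, and you yourself flag this as the main obstacle; but that step is precisely the content of the lemma and it is not established. Two things are missing. First, a concrete, continuously chosen factorization of $R\bigl(Z_k-\alpha_k\,\grad f(Z_k)\bigr)$: the DLRA gauge $\dot U^*U=0$ is a statement about flows and does not by itself produce a discrete-time lift, and the truncated eigendecomposition retraction is smooth only where the $r$-th and $(r+1)$-th eigenvalues of its argument stay separated, which is exactly what degenerates as $Z_k\to\mathcal S_{r-1}$. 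Second, the asserted bound $E=O(\alpha^2\sigma_{\min}(S)^2)$, and more importantly the stronger statement that $\nabla E$ vanishes on $\N_{Z_\#}$, which is what you actually need to conclude $Dg|_{\N_{Z_\#}}=I+\alpha\,\nabla(\widetilde F,\widetilde H)|_{\N_{Z_\#}}$: the second-order retraction error on $\M_r$ carries a curvature factor of order $1/\sigma_r(Z)$, so both the bound and its derivative must be weighed against the simultaneous vanishing of $\sigma_r$ and of $\grad f$ near $\N_{Z_\#}$. This is plausible but is the technical heart of the discrete-time result, and in your write-up it remains at the level of ``I would show''.

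For comparison, the paper avoids the lifting altogether: it linearizes the iteration map $\phi(Z)=R\bigl(Z-\alpha\,\sigma_r(Z)\,\grad f(Z)\bigr)$ directly on $\M_r$, inserting the explicit formula for $\Hess f$ from \cite{vandereycken2013low} and exploiting the same cancellation as in Lemma \ref{lemma:C0extension}, namely $\lim_{\Sigma\to\Sigma_\#}\Sigma^{-1}\sigma_r=e_re_r^*$, to obtain a finite limit $D\phi(Z_\#)$ with one eigenvalue $1+2\alpha d_r>1$ along the missing eigencomponent $u_r$ (note the factor $2$ coming from the symmetric tangent parameterization, which your predicted $1+\alpha d_r$ misses, though this is harmless) and eigenvalue $1$ otherwise; then \cite[Theorem III.7]{shub2013global} yields instability, and the absence of an upper bound on $\alpha$ follows because $D\phi(Z_\#)$ stays a local diffeomorphism for every $\alpha>0$. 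So your overall architecture — varying stepsize as the discrete analogue of the rescaling in (\ref{DLRA*}), an expanding direction along $u_r$ uniform over $\N_{Z_\#}$, Shub's theorem, and finiteness of $\mathcal S_{r-1}$ via Lemma \ref{lem: spurious_fp} — matches the paper's, but routing the argument through the parameterized domain creates the unresolved $C^1$-lifting/retraction issue that the paper's direct computation on $\M_r$ bypasses; to complete your version you would need to supply that analysis, or switch to the direct linearization of $\phi$.
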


\begin{remark}
    A few remarks are in order.
    \begin{enumerate}[label=(\arabic*)]
        \item The stepsize $\alpha_k = \alpha \cdot \sigma_r(Z_k)$ is varying but not necessarily diminishing. It is important to note that there is no upper bound on the constant $\alpha$. Thus even though $\sigma_r(Z_k) \to 0$ as $Z_k \to Z_\#$, the constant $\alpha$ can be chosen accordingly so that $\{\alpha_k\}$ can be arbitrarily close to non-diminishing stepsize.  
        \item The reason for the choice $\alpha_k = \alpha \cdot \sigma_r(Z_k)$ is similar to the rescaling of the ODE system (\ref{DLRA*}) in the previous section. Namely, this makes the Jacobian of the iteration function $C^1$-extendable to the rank-$(r-1)$ spurious critical points in $\mathcal{S}_{r-1}$, using the same techniques as in the proof of Lemma \ref{lemma:C1extension}.
    \end{enumerate}
\end{remark}

\begin{proof}[Proof of Lemma \ref{lemma:vary}]
    We use the same notations as before, namely $Z = USU^*$, $S = P \Sigma P^*$, $S_\# = P_\# \Sigma_\# P_\#^*$, and $X=U_xD_xU_x^* = \sum_{i=1}^r d_i u_i u_i^*$. We also let $Z = U_z \Sigma U_z^*$ denote the SVD of $Z$, which implies $U_z = U \cdot P^*$. We let $\widetilde{U} \in \St(n,n-r)$ be the orthogonal complement of $U$. It is also the orthogonal complement of $U_z$. Since $U = (U_1, U_3)$, where $U_3 \perp U_2$, we know that $\text{span}\{U_2\} \subset \text{span}\{\widetilde{U}\}$. Without loss of generality, we let $U_2$ be the first column of $\widetilde{U}$.
    
    Consider the iteration function
    \begin{align}\label{eq:iter}
        \begin{split}
            \phi(Z) &= R\left(Z-\alpha\cdot \sigma_r(Z) \cdot P_{T_{Z}}(\nabla f(Z))\right) \\
            &= R\left(Z-\alpha\cdot \sigma_r(Z) \cdot \text{grad}f(Z) \right).
        \end{split}
    \end{align}
    Here $\text{grad}f(Z)$ is the Riemannian gradient. 
    The Jacobian of the iteration function is 
    \begin{align*}
        D\phi(Z) = I - \alpha \cdot \left( \sigma_r(Z) \cdot \text{Hess}f(Z) + D \sigma_r(Z)\cdot \text{grad}f(Z) \right).
    \end{align*}
    It has been shown in \cite{vandereycken2013low} that 
    \begin{align}\label{eq:Hess}
        \text{Hess}f(Z)[\xi] = \xi + P_{U_z}^\perp (Z-X) \widetilde{U}N \Sigma^{-1} U_z^* + U_z \Sigma^{-1} N^* \widetilde{U}^* (Z-X) P_{U_z}^\perp,
    \end{align}
    where the vector $\xi$ is parameterized as 
    \begin{align*}
        \xi = U_z M U_z^* + U_z N\widetilde{U}^* + \widetilde{U} N^* U_z^*, \quad M\in \F^{r\times r}, \quad N \in \F^{r \times (n-r)}.
    \end{align*}
    In particular, when $\F = \R$, the degree of freedom of $\xi$ is $\frac{r(2n-r+1)}{2}$. It is equal to the dimension of the tangent space that $\xi$ lies in, which is the same as the dimension of the manifold.
    
    Consider $\lim_{Z\to Z_\#} D\phi(Z)$ for $Z_\# \in \mathcal{S}_{r-1}$. Note that the parameterization from Section \ref{sec:parameterization} ensures that $\text{span}\{U_{3}\}\perp\text{span}\{U_{1},U_{2}\}$, so that $Z_\#$ is a valid critical point, i.e. $\text{grad}f(Z_\#) = 0$. Plugging Equation (\ref{eq:Hess}) into Equation (\ref{eq:iter}), we have
    \begin{align*}
        D\phi (Z_\#)[\xi]&:= \lim_{Z\to Z_\#}D\phi (Z)[\xi] \\
        &= \xi - \alpha \cdot \left(\lim_{Z\to Z_\#}(\sigma_r(Z) \cdot \text{Hess}f(Z)[\xi])+ D \sigma_r(Z)[\xi] \cdot \text{grad}f(Z_\#)\right) \\
        &= \xi - \alpha \cdot \left(\lim_{Z\to Z_\#}\left(\sigma_r(Z) \cdot \text{Hess}f(Z) [\xi]\right)\right) \\
        &= \xi - \alpha \cdot \left(\lim_{Z\to Z_\#}\left(\sigma_r(Z)  \cdot \xi - P_U^\perp (Z-X) \widetilde{U} N \Sigma^{-1} U^* - U \Sigma^{-1} N^* \widetilde{U}^* (Z-X) P_U^\perp \right)\right) \\
        &= \xi - \alpha \cdot \left(0 \cdot \xi - U_{2} D_{2} {U_{2}}^{\top} \widetilde{U}N_2 \left(\lim_{\Sigma\to\Sigma_\#}\Sigma^{-1}\sigma_r\right) U_\#^* - U_\# \left(\lim_{\Sigma\to\Sigma_\#}\Sigma^{-1}\sigma_r\right)N^* \widetilde{U}^* U_{2} D_{2} {U_{2}}^{\top} \right) \\
        &= \xi + \alpha \cdot \left( U_{2} D_{2} {U_{2}}^{\top} \widetilde{U}N_2 \left(\lim_{\Sigma\to\Sigma_\#}\Sigma^{-1}\sigma_r\right) U_\#^* + U_\# \left(\lim_{\Sigma\to\Sigma_\#}\Sigma^{-1}\sigma_r\right)N^* \widetilde{U}^* U_{2} D_{2} {U_{2}}^{\top} \right).
    \end{align*}
    Here, similar to the proof of Lemma \ref{lemma:C0extension}, we have
    \begin{align*}
        \lim_{\Sigma\to\Sigma_\#}\Sigma^{-1}\sigma_r = \text{diag}\{0,\ldots, 0,1\} = e_r e_r^*.
    \end{align*}
    Thus, it follows that
    \begin{align*}
        D\phi (Z_\#)[\xi] = \xi + \alpha \cdot \left( U_{2} D_{2} {U_{2}}^{\top} \widetilde{U}N e_r e_r^* U_\#^*  + U_\# e_r e_r^* N^* \widetilde{U}^* U_{2} D_{2} {U_{2}}^{\top} \right).
    \end{align*}
    Note that without loss of generality, we have let $U_2$ be the first column of $\widetilde{U}$. Thus we have
    \begin{align*}
        D\phi (Z_\#)[\xi] &= \xi + \alpha \cdot \Big( U_{2} D_{2} (1,0,\ldots, 0) (N e_r)
        \begin{pmatrix}
            {U_{2}}^{\top} \\
            0 \\
            \vdots \\
            0
        \end{pmatrix} 
        + (U_{2},\,0,\ldots, 0) (e_r^* N^*) 
        \begin{pmatrix}
            1 \\
            0 \\
            \vdots \\
            0
        \end{pmatrix} 
        D_{2} {U_{2}}^{\top} \Big) \\
        &=\xi + \alpha \cdot 2N(1,1) \cdot U_{2} D_{2} {U_{2}}^{\top}.
    \end{align*}
    We can immediately read the eigenvalues and eigenvectors of $D\phi (Z_\#)$ from the above expression. Specifically, when $\F=\R$, $D\phi (Z_\#)$ has
    \begin{enumerate}[label=(\arabic*)]
        \item One eigenvector
        $\xi = U N \widetilde{U}^* + \widetilde{U}N U^* $ with $N = \begin{pmatrix}
            1 & 0 &\ldots &0 \\
            0 & 0 &\ldots &0 \\
            \vdots & \vdots & &  \vdots \\
            0 & 0 &\ldots &0
        \end{pmatrix}$, whose corresponding eigenvalue is $\lambda = 1+2\alpha\cdot D_{2} >1$;
        \item $(\frac{r(2n-r+1)}{2}-1)$ eigenvectors with eigenvalues $\lambda = 1$. 
    \end{enumerate}
    The case $\F=\C$ is similar except that the dimensionality is different.  
    
    Now that $D\phi(Z_\#)$ has one eigenvalue greater than 1, while the rest of the eigenvalues are equal to 1. By {\cite[Theorem III.7]{shub2013global}}, there is an unstable manifold and a center manifold in the neighborhood of $Z_\#$, which can be extended globally. The existence of the unstable manifold ensures that $Z_\#$ is an asymptotic unstable fixed point of the iteration function $\phi(Z_\#)$. Thus, the Riemannian gradient descent algorithm with varying step size (\ref{eq:vary}) almost surely escapes $\mathcal{S}_{r-1}$.
    
    In particular, $D\phi(Z_\#)$ is always a local diffeomorphism independent of the choice of $\alpha$, as its only eigenvalues are $1$ and $1+\alpha D_{2}$. Therefore, the result of Lemma \ref{lemma:vary} holds true for arbitrarily large $\alpha>0$.
\end{proof}

\section{Numerical experiments}
\label{sec:numerical}

In this section, we present some numerical experiments to illustrate our theoretical results in Theorem \ref{thm:GF} and \ref{lemma:vary}. We also provide some evidence in support of conjectures beyond the previous theorem and lemma. 

In all experiments, we let $\F = \R$, $m = n =100$, $r = 5$, and we use the same ground truth matrix $X \in \M_r$ with distinct singular values. We use the Riemannian gradient descent algorithm to minimize $f(Z) = \frac{1}{2}\|Z-X\|_F^2$. The experiments only differ by the sampling rule and the choice of the step sizes $\alpha_k$. Each figure is generated by repeating the experiment 100 times. The shaded area represents the range of the data and the solid line represents the median.  

\begin{figure}[ht]
    \centering
    \begin{subfigure}[t]{.4\linewidth}
        \includegraphics[width = \linewidth, align = c]{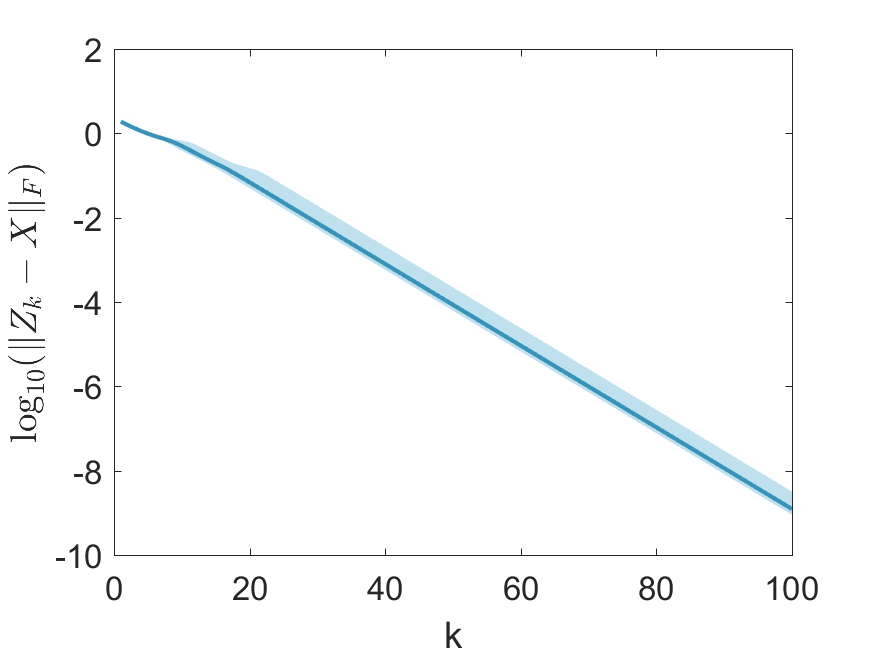}
        \caption{Local escape near $\mathcal{S}_{r-1}$}
        \label{fig:rank_r_1-dist}
    \end{subfigure}
    \hspace{0.05\textwidth}
    \begin{subfigure}[t]{.4\linewidth}
        \includegraphics[width = \linewidth, align = c]{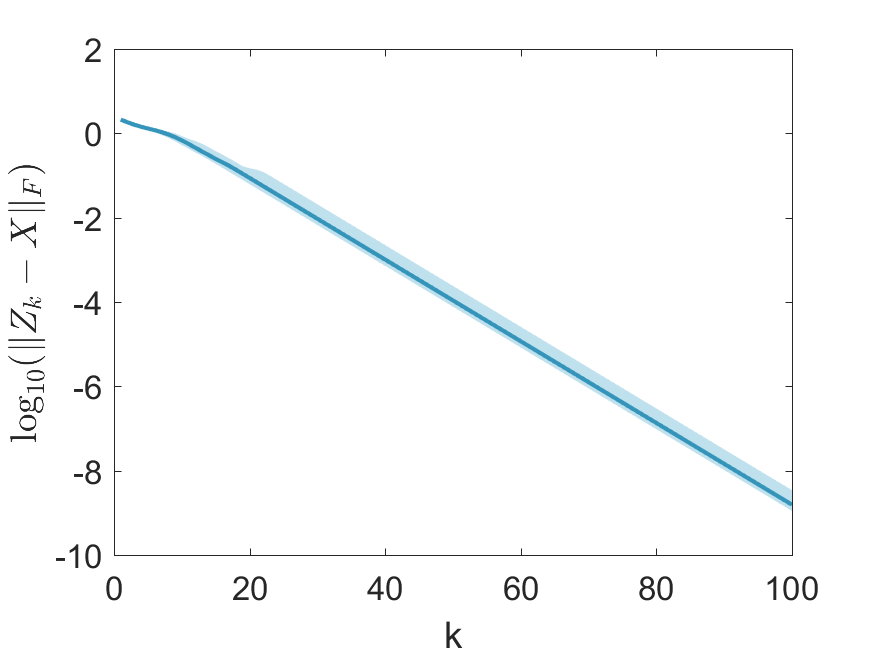}
        \caption{Local escape near $\mathcal{S}_{r-2}$}
        \label{fig:rank_r_2-dist}
    \end{subfigure}
    \caption{Escape of spurious critical points}

\end{figure}

The first experiment is performed near a rank-$(r-1)$ spurious critical point $Z_\#^{(1)} \in \mathcal{S}_{r-1}$. The initial points are randomly sampled in the local neighborhood of $Z_\#^{(1)}$. The stepsize is fixed to be $\alpha_k \equiv \alpha=0.2$. Figure \ref{fig:rank_r_1-dist} shows the log10 distance between $Z_k$ and $X$. It can be seen that in all the repeated experiments, the sequence always succeeds to escape $Z_{\#}^{(1)}$ and converge to $X$. 

To verify whether $\mathcal{S}_{s}$ ($s<r-1$) incurs the same behavior, we repeat the experiment with $Z_\#^{(2)} \in \mathcal{S}_{r-2}$. It can be seen in Figure \ref{fig:rank_r_2-dist} that the phenomenon is indeed the same. Thus we conjecture that a similar result as Theorem \ref{thm:GF} holds for those $\mathcal{S}_{s}$ with $s<r-1$ as well. Proof of such result is left for future work.

\begin{figure}[ht]
    \centering
    \begin{subfigure}[t]{.4\linewidth}
        \includegraphics[width = \linewidth, align = c]{figures/rank_r_1-dist.png}
        \caption{$\log_{10}(\|Z_k-X\|_F)$, fixed stepsize}
        \label{fig:global-dist}
    \end{subfigure}
    \hspace{0.05\textwidth}
    \begin{subfigure}[t]{.4\linewidth}
        \includegraphics[width = \linewidth, align = c]{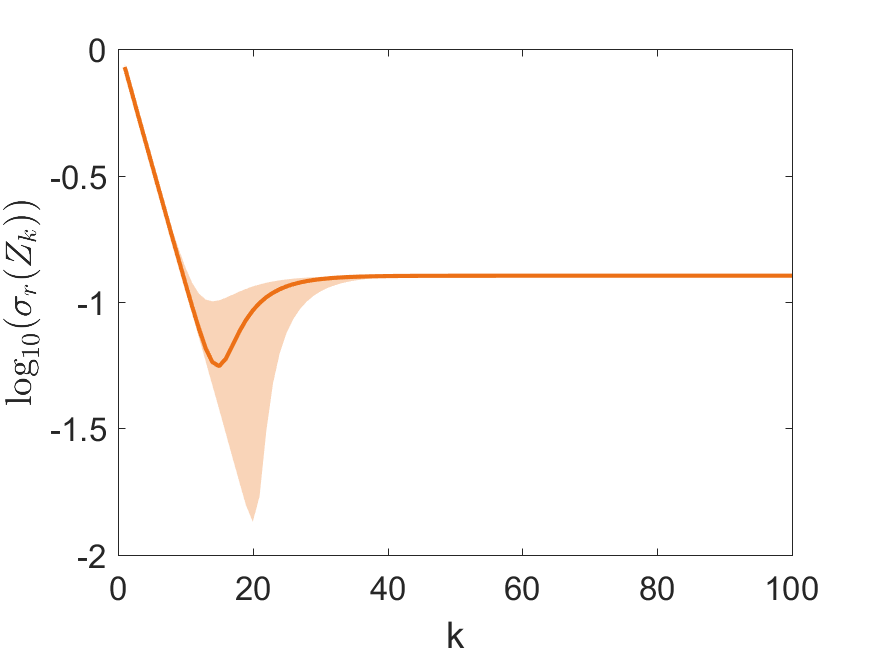}
        \caption{$\log_{10}(\sigma_r(Z_k))$, fixed stepsize}
        \label{fig:global-sigmin}
    \end{subfigure}

    \begin{subfigure}[t]{.4\linewidth}
        \includegraphics[width = \linewidth, align = c]{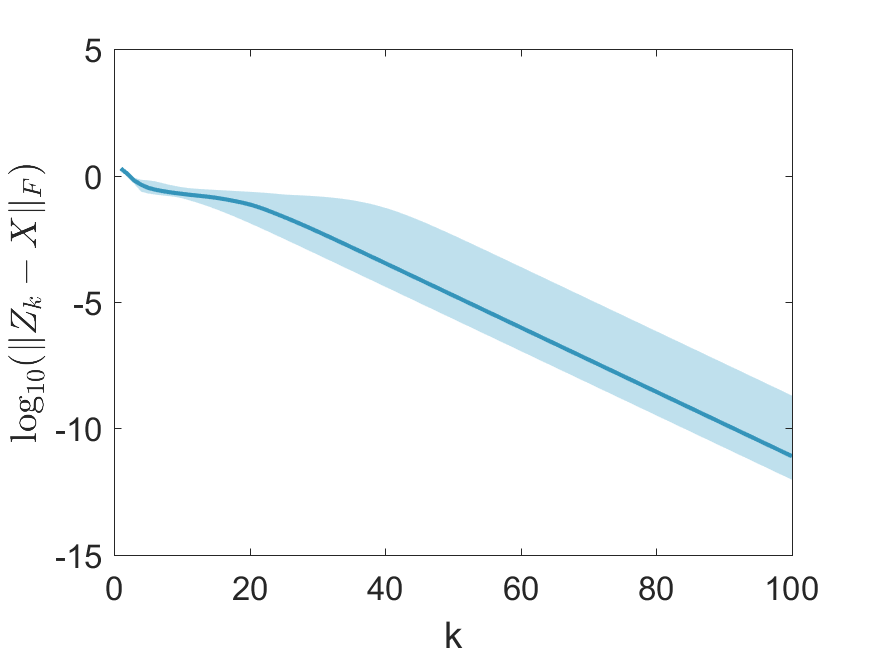}
        \caption{$\log_{10}(\|Z_k-X\|_F)$, varying stepsize}
        \label{fig:varying-dist}
    \end{subfigure}
    \hspace{0.05\textwidth}
    \begin{subfigure}[t]{.4\linewidth}
        \includegraphics[width = \linewidth, align = c]{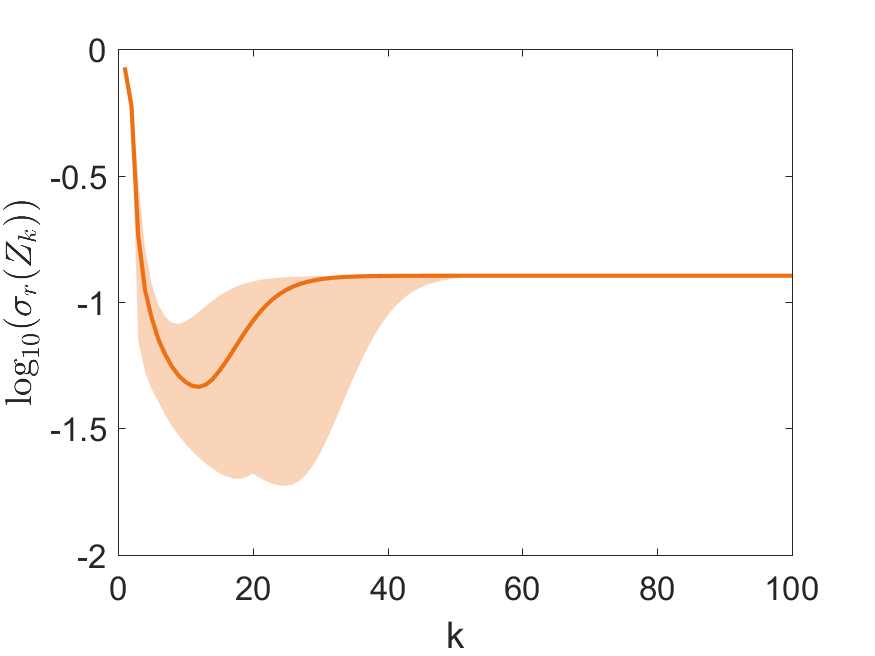}
        \caption{$\log_{10}(\sigma_r(Z_k))$, varying stepsize}
        \label{fig:varying-sigmin}
    \end{subfigure}
    \caption{Comparison of fixed and varying stepsizes}
    \label{fig:global_vary}
\end{figure}

Next, we investigate Lemma \ref{lemma:vary} and the varying step size $\alpha_k = \alpha \cdot \sigma_r(Z_k)$. Figures \ref{fig:global-dist} and \ref{fig:global-sigmin} are the results with a fixed stepsize $\alpha_k \equiv 0.2$. Figures \ref{fig:varying-dist} and \ref{fig:varying-sigmin} are the results with varying stepsizes $\alpha_k = 2 \sigma_r(Z_k)$. The left are the distances to the ground truth $X$. The right are the log values of $\sigma_r(Z_k)$ along the iterative path. We can see that first of all, the iterative sequences always escape all spurious critical points and converge to the ground truth. Moreover, the value of $\sigma_r(Z_k)$ is never too small, but soon converges to the smallest singular value of $X$. This helps illustrate that the varying stepsize $\alpha_k = \alpha \cdot \sigma_r(Z_k)$ is \emph{not a diminishing stepsize} in practice, but is rather always above a certain value.

\section{Discussion}
\label{sec:discussion}
In this paper, we discuss the asymptotic escape of the spurious critical points on the low-rank matrix manifold. The goal is to shed some light on the incompleteness of the low-rank matrix manifold $\M_r$ and justify the global use of Riemannian gradient descent on the manifold. To this end, we first point out the existence of a set of spurious critical points $\mathcal{S}_{\#} \subset \overline{\M_r}\backslash\M_r$ and discuss its singularity. We then use a rescaled gradient flow combined with the dynamical low-rank approximation to describe the local landscape, which enables us to eliminate the singularity and prove the asymptotic escape result. We also present a corresponding result for the gradient descent. Numerical experiments are provided to illustrate the theoretical results. 

Though this study is focused on $\mathcal{S}_{r-1}$, the asymptotic escape is empirically observed for $\mathcal{S}_s$ with $s\le r-2$ as well. In fact, all spurious critical points in $\mathcal{S}_{\#}$ are observed to be asymptotically unstable in practice, which can be seen from the numerical experiments. The current rescaled gradient flow (\ref{DLRA*}) loses both $C^0$- and $C^1$-extensions at $\mathcal{S}_s$ with $s\le r-2$. This is because the continuity of eigenvalues and eigenvectors are only possible when only one of the eigenvalues is approaching zero. Extension of the result to the case $s\le r-2$ is left for future work. On the other hand, the assumption that the eigenvalues of $X$ are distinct is not an essential assumption, and can easily be removed.

Even though the result for the gradient descent calls for a step size $\alpha\cdot \sigma_{r}(Z)$, there is no upper bound on the constant $\alpha$ from the asymptotic escape analysis. It is because the isomorphism requirement will not be violated even for arbitrarily large $\alpha$. Thus the step size criterion is not more stringent than that in classical saddle escape results, where there is usually an upper bound on the step size.

In addition to the asymptotic result in this paper, a non-asymptotic result on the number of steps needed to escape the spurious critical points can be found in \cite{hou2020fast}. There it is shown that the converging set of the spurious critical points can be upper bounded by a small positive measure. With high probability, one has nearly linear convergence rate towards the ground truth. The two sides of the story complement each other and provide a wholesome picture of the unique structure of the low-rank matrix manifold.

\bibliographystyle{plain}
\bibliography{reference}

\appendix
\section{Auxiliary lemmas}

\begin{lemma}[The $\text{sin}\Theta$ Theorem,  \cite{davis1970rotation}]
\label{lemma:sin_theta}
    Let $A$ be a Hermitian operator. Assume that
    \begin{align*}
        A = 
        \begin{pmatrix}
            E_0 & E_1
        \end{pmatrix}
        \begin{pmatrix}
            A_0 & 0 \\
            0 & A_1
        \end{pmatrix}
        \begin{pmatrix}
            E_0^* \\
            E_1^*
        \end{pmatrix}
    \end{align*}
    is an invariant subspace decomposition (i.e., a generalized eigenvalue decomposition) of $A$. Let
    \begin{align*}
        B = A + \Delta, \quad B = 
        \begin{pmatrix}
            F_0 & F_1
        \end{pmatrix}
        \begin{pmatrix}
            B_0 & 0 \\
            0 & B_1
        \end{pmatrix}
        \begin{pmatrix}
            F_0^* \\
            F_1^*
        \end{pmatrix}.
    \end{align*}
    Let $\Theta_0$ be the angle matrix between subspaces $E_0$ and $F_0$. Define the residual as 
    \begin{align*}
        R := BE_0 - E_0A_0.
    \end{align*}
    If there is an interval $[\beta, \alpha]$ and $\delta >0$, such that the spectrum of $A_0$ lies entirely in $[\beta, \alpha]$, while that of $B_1$ lies entirely in $(-\infty, \beta-\delta] \cup [\alpha+\delta, +\infty)$, then for every unitary-invariant norm $\|\cdot\|$, we have
    \begin{align*}
        \delta \|\text{sin}\Theta_0\| \le \|R\|.
    \end{align*}
    In particular, this holds true for the matrix 2-norm and the Frobenius norm. 
\end{lemma}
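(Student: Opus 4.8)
The statement is the classical Davis--Kahan $\sin\Theta$ theorem, so my plan is to run the standard argument that reduces the inequality to a lower bound for a Sylvester-type operator between two Hermitian matrices with separated spectra. The first step is to express the quantity of interest through a single cross block: writing $P_0=E_0E_0^*$ and $Q_1=F_1F_1^*$ for the orthogonal projectors onto $\mathrm{range}(E_0)$ and $\mathrm{range}(F_1)$, the definition of principal angles (equivalently the CS decomposition of the subspace pair) gives $\|\sin\Theta_0\|=\|Q_1P_0\|=\|F_1^*E_0\|$ in every unitarily invariant norm. Setting $Y:=F_1^*E_0$, the target becomes $\delta\|Y\|\le\|R\|$.

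Next I would convert the residual into a Sylvester expression. Because $\mathrm{range}(F_1)$ is a $B$-invariant subspace with $F_1^*BF_1=B_1$ and $F_1^*F_1=I$, one has $F_1^*B=B_1F_1^*$; multiplying $R=BE_0-E_0A_0$ on the left by $F_1^*$ then yields $F_1^*R=B_1Y-YA_0$. Since $F_1^*$ is a contraction ($\|F_1^*\|_{\mathrm{op}}=1$), $\|F_1^*R\|\le\|R\|$, so everything reduces to the \emph{separation bound} $\|B_1Y-YA_0\|\ge\delta\|Y\|$.

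The separation bound is the heart of the matter. Diagonalizing $A_0=WMW^*$ and $B_1=V\Lambda V^*$ with real diagonal $M=\mathrm{diag}(\mu_j)$ and $\Lambda=\mathrm{diag}(\lambda_i)$, the map $Y\mapsto B_1Y-YA_0$ becomes, in these bases, the Hadamard multiplication with entries $\lambda_i-\mu_j$; the hypothesis $\mu_j\in[\beta,\alpha]$ together with $\lambda_i\notin(\beta-\delta,\alpha+\delta)$ forces $|\lambda_i-\mu_j|\ge\delta$ for all $i,j$. For the Frobenius norm this is immediate, since $\|B_1Y-YA_0\|_F^2=\sum_{i,j}(\lambda_i-\mu_j)^2|(V^*YW)_{ij}|^2\ge\delta^2\|Y\|_F^2$, and for the operator norm it follows by an equally short argument; combining with the first two steps gives $\delta\|\sin\Theta_0\|=\delta\|Y\|\le\|B_1Y-YA_0\|=\|F_1^*R\|\le\|R\|$.

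The one genuinely delicate point — and the only place where the interval-plus-gap geometry is used rather than mere spectral disjointness — is proving the separation bound for an \emph{arbitrary} unitarily invariant norm, i.e. that the inverse Schur multiplier $[(\lambda_i-\mu_j)^{-1}]$ has multiplier norm at most $1/\delta$; this is handled by the standard integral representation of $1/(x-y)$ that peels $\mathrm{spec}(B_1)$ into its two pieces $(-\infty,\beta-\delta]$ and $[\alpha+\delta,+\infty)$. Since the present statement only asserts the bound for the operator and Frobenius norms, the elementary diagonalization argument above already suffices, and for the general case one may simply invoke \cite{davis1970rotation}.
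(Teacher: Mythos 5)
The paper itself gives no proof of this lemma --- it is quoted as an auxiliary result directly from \cite{davis1970rotation} --- so your argument stands or falls on its own. Your reduction is the standard one and is correct: $\|\sin\Theta_0\|=\|F_1^*E_0\|$ in any unitarily invariant norm, the invariance identity $F_1^*B=B_1F_1^*$ turns $F_1^*R$ into the Sylvester expression $B_1Y-YA_0$ with $Y=F_1^*E_0$, and $\|F_1^*R\|\le\|R\|$, so everything indeed hinges on the separation bound $\|B_1Y-YA_0\|\ge\delta\|Y\|$.

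The gap is in how you discharge that bound. After diagonalizing you only record the entrywise separation $|\lambda_i-\mu_j|\ge\delta$, claim the operator-norm case follows ``by an equally short argument,'' and assert in your last paragraph that the interval-plus-gap geometry is needed only for general unitarily invariant norms, the ``elementary diagonalization argument'' sufficing for the $2$-norm. That is not correct: an entrywise bound on the inverse Schur multiplier controls the Frobenius norm but not the operator norm, and for merely $\delta$-separated Hermitian spectra (e.g.\ interlacing eigenvalues) the constant $1$ is provably false in the operator norm --- the best dimension-free constant there is $2/\pi$, which is exactly the distinction between the Davis--Kahan $\sin\Theta$ theorem and its ``generalized'' variant carrying the extra factor $\pi/2$. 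So the geometric hypothesis must already enter for the $2$-norm, and your proof as written establishes only the Frobenius case. (Note also that the lemma asserts the bound for \emph{every} unitarily invariant norm, the $2$-norm and Frobenius norm being only special cases, so proving those two alone would not discharge the statement, though deferring to the citation is defensible since the paper does the same.) The fix is short and needs no integral representation: let $c=(\alpha+\beta)/2$ and $r=(\alpha-\beta)/2$, so that $\|A_0-cI\|_2\le r$ while $B_1-cI$ is invertible with $\sigma_{\min}(B_1-cI)\ge r+\delta$; then for any unitarily invariant norm
\begin{align*}
\|B_1Y-YA_0\| = \|(B_1-cI)Y-Y(A_0-cI)\|
\ge \sigma_{\min}(B_1-cI)\,\|Y\| - \|Y\|\,\|A_0-cI\|_2
\ge \delta\,\|Y\|,
\end{align*}
using $\|Y\|\le\|(B_1-cI)^{-1}\|_2\,\|(B_1-cI)Y\|$ and $\|Y(A_0-cI)\|\le\|Y\|\,\|A_0-cI\|_2$. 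Combined with your first two steps this yields $\delta\|\sin\Theta_0\|\le\|R\|$ in every unitarily invariant norm.
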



\section{Proof of Lemma \ref{lemma:submanifold}}
\label{sec:proof_submanifold}
We recall the lemma from the main text.
\begin{manuallemma}{3.2} Define 
    \begin{align*}
        \N_{Z_\#}:= \left\{(U_\#, \, S_\#): \,\, U_\# = (U_{1}, U_{3})P_\#^*, \,\, S_\# = P_\#
        \begin{pmatrix}
            D_{1} & 0 \\ 0 & 0
        \end{pmatrix}
        P_\#^*, \,\, U_3 \perp U_x
        \right\}.
    \end{align*}
    Then $\N_{Z_\#}$ is an embedded submanifold of the manifold $\M := \St(n,r) \oplus \S_r$.
\end{manuallemma}

\blue{The intuition behind Lemma \ref{lemma:submanifold} is that the set $\N_{Z_\#}$ is a subset of $\M$ characterized by some algebraic constraints, namely $U_\#S_\#U_\#^* = Z_\#$ and $U_3 \perp U_x$. As is often the case, one would expect such algebraic constraints to give an embedded submanifold. We will make this intuition rigorous in this section.

We note that traditionally, embedded submanifold is proved by the submersion theorem, i.e., by showing that the set is the preimage of a regular value of a submersive mapping. But this approach does not work here because $Z_\#$ is not a regular value. Instead, we need to go back to the definition of a submanifold and construct chart functions on $\N_{Z_\#}$ directly.
}


Below are some auxiliary results from the literature. 

\begin{lemma}[{\cite[Proposition 3.3.2]{absil2009optimization}}]
    \label{lemma:submanifold_property}
    A subset $\N$ of a manifold $\M$ is a $d$-dimensional embedded submanifold of $\M$ if and only if, around each point $x\in\N$, there exists a chart $(\U,\varphi)$ of $\M$ such that $\N\cap\U$ is a $\varphi$-coordinate slice of $\U$, i.e.,
    \begin{align*}
        \N \cap \U = \{x\in\U:\,\, \varphi(x) \in \R^{d} \times \mathbf{0}\}.
    \end{align*}
    In this case, the chart $(\N \cap \U, \varphi)$, where $ \varphi$ is seen as a mapping into $\R^d$, is a chart of the embedded submanifold $\N$.
\end{lemma}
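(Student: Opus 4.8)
The plan is to exhibit, around each point of $\N_{Z_\#}$, an explicit coordinate slice chart of $\M = \St(n,r)\oplus\S_r$ and then invoke Lemma \ref{lemma:submanifold_property}. The first task is to understand $\N_{Z_\#}$ intrinsically. Since every element is of the form $(U_\#, S_\#)$ with $U_\# = (U_1, U_3)P_\#^*$ and $S_\# = P_\#\,\mathrm{diag}(D_1, 0)\,P_\#^*$, the pair is completely determined by the choice of $U_3 \in \St(n,r-s)$ with $U_3 \perp U_x$ (here $s = r-1$, so $r-s=1$, and $U_x$ is the $n\times r$ factor of the ground truth, so $U_3$ ranges over the unit sphere in the $(n-r)$-dimensional orthogonal complement of $\mathrm{Col}(U_x)$) together with $P_\# \in \mathrm{SO}(r)$. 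However, not all pairs $(U_3, P_\#)$ give distinct elements: the isotropy comes from rotations that fix the decomposition, so the effective parameter space is a quotient. I would first identify $\N_{Z_\#}$ set-theoretically as the image of the smooth map $(U_3, P_\#) \mapsto \big((U_1,U_3)P_\#^*,\ P_\#\,\mathrm{diag}(D_1,0)\,P_\#^*\big)$ and compute its fibers, concluding that $\N_{Z_\#}$ is abstractly a sphere $\mathbb{S}^{n-r-1}$ crossed with a homogeneous space of dimension equal to that of $\mathrm{SO}(r)$ minus the stabilizer of $\mathrm{diag}(D_1,0)$ (which, since $D_1$ has distinct entries, is the group of sign-flips on the first $r-1$ coordinates together with the free sign on the last). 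This pins down $d := \dim \N_{Z_\#}$.

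Next I would build the slice chart. Fix a base point $(U_\#^0, S_\#^0) \in \N_{Z_\#}$; WLOG (by the chart-invariance in Lemma \ref{lemma:submanifold_property}) assume $P_\#^0 = I_r$, so $U_\#^0 = (U_1, U_3^0)$ and $S_\#^0 = \mathrm{diag}(D_1, 0)$. Near $U_\#^0$, the Stiefel manifold $\St(n,r)$ admits a standard chart, e.g. via $U \mapsto $ (the coordinates of $U$ in a tangent-plus-normal splitting at $U_\#^0$), and $\S_r$ is a vector space so it is its own chart. The key move is to choose coordinates on $\M$ adapted to the three constraints defining $\N_{Z_\#}$: (i) $U_3$ lies in the orthogonal complement of $U_x$ — an affine-linear condition once we use a linear chart near $U_3^0$ on the sphere, cutting out a subspace; (ii) the ``core'' block $S$ restricted to the splitting induced by $U$ must have the special form $\mathrm{diag}(D_1,0)$ up to the $\mathrm{SO}(r)$-rotation that relates $U$ to $(U_1,U_3)$; (iii) the remaining freedom is exactly the $\mathrm{SO}(r)$ direction. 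Concretely, I would use the map $(U, S) \mapsto (U, \ Q^*SQ)$ where $Q = Q(U)$ is a smoothly-chosen rotation aligning $U$ with the reference frame $(U_1, U_3^0)$ near the base point; in these new coordinates $\N_{Z_\#}$ becomes $\{(U, T): U \in (\text{affine slice}),\ T = \mathrm{diag}(D_1,0)\}$, which is manifestly a coordinate slice. One must check this change of coordinates is a local diffeomorphism of $\M$ — it is, since $Q(U)$ depends smoothly on $U$ and $S \mapsto Q^*SQ$ is linear and invertible for each fixed $U$.

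The main obstacle, I expect, is handling the nontrivial isotropy / the fact that the naive parameterization by $(U_3, P_\#)$ is not injective, so that one cannot simply pull back a product chart. Concretely, the rotations $P_\#$ that commute with $\mathrm{diag}(D_1, 0)$ act trivially on $(U_\#, S_\#)$ only up to sign ambiguities in the columns of $U_1$ and $U_3$, and the last column of $U_\#$ (the one multiplying the zero eigenvalue) is free up to sign but is constrained to lie in $U_x^\perp$ — so the constraint set interacts with the redundancy. I would resolve this either by passing to a connected component / fixing a local section of the redundancy (choosing, near the base point, a canonical representative $P_\#$ in a neighborhood in $\mathrm{SO}(r)/(\text{stabilizer})$ via the exponential map), or, cleaner, by never parameterizing at all and instead directly verifying the coordinate-slice property in the aligned coordinates $(U, Q(U)^*SQ(U))$ of the previous paragraph, where the redundancy has been quotiented out by construction. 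Once the slice chart is in hand around an arbitrary point, Lemma \ref{lemma:submanifold_property} immediately yields that $\N_{Z_\#}$ is an embedded submanifold of dimension $d$, completing the proof. A final sanity check is that the computed $d$ agrees from both the intrinsic description (sphere times homogeneous space) and the slice description (codimension count of the affine constraint on $U$ plus the ``$S = \mathrm{diag}(D_1,0)$'' constraint).
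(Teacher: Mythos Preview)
You have attacked the wrong statement. The lemma you were asked to prove is the \emph{general} coordinate-slice characterization of embedded submanifolds (cited in the paper from \cite[Proposition 3.3.2]{absil2009optimization}); the paper does not prove it at all, it merely quotes it as a standard differential-geometry fact. Your proposal instead sketches a proof of Lemma~\ref{lemma:submanifold}, namely that the \emph{specific} set $\N_{Z_\#}\subset\St(n,r)\oplus\S_r$ is an embedded submanifold, and it even \emph{invokes} Lemma~\ref{lemma:submanifold_property} in the final step --- so you are using the very result you were meant to establish. A correct proof of the stated lemma would be the standard argument found in any smooth-manifolds text: one direction is immediate (a coordinate slice inherits a smooth structure from the restricted chart and the inclusion is an immersion and a topological embedding); for the converse one uses that an embedded submanifold admits adapted charts built from the immersion together with a local complement to the image of its differential.

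As an aside, had the target actually been Lemma~\ref{lemma:submanifold}, your strategy is broadly in the same spirit as the paper's (both construct explicit slice charts and appeal to Lemma~\ref{lemma:submanifold_property}), but the executions differ. The paper reparameterizes $(U,S)\in\N_{Z_\#}$ by $(P_1,U_3)$ with $P_1\in\St(r,s)$ and $U_3$ in a constrained Stiefel manifold, uses the explicit Stiefel charts of \cite{fraikin2007optimization}, and then extends to an ambient chart by adjoining extra components $(M_4,R_2,\widetilde{S},R_1)$ that vanish exactly on $\N_{Z_\#}$. Your alignment-rotation idea $(U,S)\mapsto(U,Q(U)^*SQ(U))$ is slicker in principle and sidesteps some of the bookkeeping, but you correctly anticipate that the isotropy of the parameterization is the delicate point; the paper handles this not by quotienting but by carefully choosing which coordinates are free and which are determined (e.g.\ fixing $P_2$ as a function of $P_1$ via Gram--Schmidt).
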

By Lemma \ref{lemma:submanifold_property}, if we can construct an atlas of $\M$ and an atlas of $\N_{Z_\#}$, such that the charts in the latter atlas are coordinate slices of the charts in the former atlas, then $\N_{Z_\#}$ is an embedded submanifold of $\M$. This approach is less common than the traditional submersion theorem approach, but is necessary for our problem.

\begin{lemma}[\cite{fraikin2007optimization}]
    \label{lemma:stiefel}
    For the real Stiefel manifold $\St(n,k)$, there exists an atlas $\cup_{Q}(\mathcal{U}_Q, \varphi_Q)$ of the Stiefel manifold. Namely, for each chart $(\mathcal{U}_Q, \varphi_Q)$, $Q$ is a matrix in $\St(n,k)$, and the function $\varphi_Q$ can be expressed as
    \begin{align*}
        \varphi_Q: \quad \mathcal{U}_Q &\to \text{Skew}(k) \oplus \R^{(n-k)\times k}, \\
        U &\mapsto (\Omega_{11}, \Omega_{21}),
    \end{align*}
    where
    \begin{align*}
        \Omega_{11} &= (U_1^\top + Q_1^\top)^{-1}\left(Q_1^\top U_1 + U_2^\top Q_2 - U_1^\top Q_1- Q_2^\top U_2\right)(U_1+Q_1)^{-1}, \quad \Omega_{11} = - \Omega_{11}^\top,\\
        \Omega_{21} &= (U_2 - Q_2) (U_1+Q_1)^{-1},
    \end{align*}
    and $U = \begin{pmatrix}
        U_1 \\
        U_2
    \end{pmatrix}$, 
    $Q = \begin{pmatrix}
        Q_1 \\
        Q_2
    \end{pmatrix}$ are the block forms of $U$ and $Q$ respectively. Such chart function is defined on the subset $\mathcal{U}_Q \subset \St(n,k) $ which covers all of the manifold $\St(n,k)$ except a zero-measure set. 
    
    In particular, if $Q = \begin{pmatrix}
        I_k \\
        \mathbf{0}
    \end{pmatrix}$, then 
    \begin{align*}
        \Omega_{11} &= (U_1^\top + I_k)^{-1}\left(U_1 - U_1^\top \right)(U_1+I_k)^{-1},\\
        \Omega_{21} &= U_2 (U_1+I_k)^{-1}.
    \end{align*}
\end{lemma}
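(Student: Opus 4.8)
The plan is to prove Lemma~\ref{lemma:stiefel} by the classical Cayley-transform construction: one exhibits an explicit smooth parameterization of a neighbourhood of $Q$ by the linear space $\text{Skew}(k)\oplus\R^{(n-k)\times k}$, and then verifies that the map $\varphi_Q$ displayed in the statement is exactly its inverse. Throughout I use that $\St(n,k)=\{U\in\R^{n\times k}:U^\top U=I_k\}$ is an embedded submanifold of $\R^{n\times k}$ (the regular level set of $U\mapsto U^\top U-I_k$) of dimension $nk-\tfrac{k(k+1)}{2}=\tfrac{k(k-1)}{2}+(n-k)k=\dim\!\big(\text{Skew}(k)\oplus\R^{(n-k)\times k}\big)$. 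Consequently, a smooth bijection from an open subset of $\St(n,k)$ onto an open subset of $\text{Skew}(k)\oplus\R^{(n-k)\times k}$ whose differential is injective at every point is automatically a diffeomorphism, i.e.\ a chart, and to obtain an atlas it suffices to produce such charts covering $\St(n,k)$ with smooth transition maps.

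Fix $Q\in\St(n,k)$, with block form $Q=\big(\begin{smallmatrix}Q_1\\Q_2\end{smallmatrix}\big)$, $Q_1\in\R^{k\times k}$. For $(\Omega_{11},\Omega_{21})\in\text{Skew}(k)\oplus\R^{(n-k)\times k}$ put $\widehat\Omega=\big(\begin{smallmatrix}\Omega_{11}&-\Omega_{21}^\top\\\Omega_{21}&0\end{smallmatrix}\big)\in\text{Skew}(n)$ and define
\[
  \psi_Q(\Omega_{11},\Omega_{21}):=\mathrm{cay}(\widehat\Omega)\,Q,\qquad\text{where}\qquad\mathrm{cay}(\widehat\Omega):=(I_n+\widehat\Omega)(I_n-\widehat\Omega)^{-1}.
\]
Since a real skew-symmetric matrix has purely imaginary spectrum, $I_n-\widehat\Omega$ is invertible for every such $\widehat\Omega$, so $\psi_Q$ is defined and $C^\infty$ (it is rational with nonvanishing denominator) on the whole of $\text{Skew}(k)\oplus\R^{(n-k)\times k}$; and because $\mathrm{cay}$ sends skew-symmetric matrices to orthogonal matrices, $\psi_Q(\Omega_{11},\Omega_{21})$ is an orthonormal $n\times k$ frame, hence lies in $\St(n,k)$, with $\psi_Q(0,0)=Q$.

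The heart of the argument is the explicit inversion. The equation $U=\psi_Q(\Omega_{11},\Omega_{21})$ is equivalent to $(I_n-\widehat\Omega)\,U=(I_n+\widehat\Omega)\,Q$; splitting both sides into their top $k$ and bottom $n-k$ rows and inserting the block form of $\widehat\Omega$, the bottom block reads $U_2-Q_2=\Omega_{21}(U_1+Q_1)$ and the top block reads $\Omega_{11}(U_1+Q_1)=U_1-Q_1+\Omega_{21}^\top(U_2+Q_2)$. Hence on the open set $\mathcal{U}_Q:=\{U\in\St(n,k):\det(U_1+Q_1)\neq0\}$ one solves $\Omega_{21}=(U_2-Q_2)(U_1+Q_1)^{-1}$, and substituting this into the top-block relation and using $U^\top U=Q^\top Q=I_k$ to simplify $(U_1^\top+Q_1^\top)(U_1-Q_1)+(U_2^\top-Q_2^\top)(U_2+Q_2)$ to $Q_1^\top U_1+U_2^\top Q_2-U_1^\top Q_1-Q_2^\top U_2$ produces precisely the formula for $\Omega_{11}$ in the statement; transposing that expression and invoking $U^\top U=Q^\top Q=I_k$ once more gives $\Omega_{11}^\top=-\Omega_{11}$. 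This defines a $C^\infty$ map $\varphi_Q:\mathcal{U}_Q\to\text{Skew}(k)\oplus\R^{(n-k)\times k}$, and a direct back-substitution (again using the Stiefel constraints) shows $\psi_Q\circ\varphi_Q=\mathrm{id}_{\mathcal{U}_Q}$ and $\varphi_Q\circ\psi_Q=\mathrm{id}$ on $\varphi_Q(\mathcal{U}_Q)$; differentiating the former identity shows $D\varphi_Q(U)$ is injective at every $U\in\mathcal{U}_Q$, hence bijective by the dimension count above, so $(\mathcal{U}_Q,\varphi_Q)$ is a chart. Finally, the transition maps $\varphi_{Q'}\circ\psi_Q$ are rational with nonvanishing denominators, hence $C^\infty$; each $\mathcal{U}_Q$ is the complement in $\St(n,k)$ of the zero set of the real-analytic function $U\mapsto\det(U_1+Q_1)$, which is not identically zero (for an SVD $Q_1=A\Sigma B^\top$ it is nonzero at $U=\big(\begin{smallmatrix}AB^\top\\\mathbf 0\end{smallmatrix}\big)$, where $U_1+Q_1=A(I_k+\Sigma)B^\top$), so $\mathcal{U}_Q$ has full measure; and for any $U_0\in\St(n,k)$, writing its top block as $U_{0,1}=A\Sigma B^\top$ ($\Sigma$ diagonal with entries in $[0,1]$) and choosing $Q=\big(\begin{smallmatrix}AB^\top\\\mathbf 0\end{smallmatrix}\big)\in\St(n,k)$ makes $U_{0,1}+Q_1=A(\Sigma+I_k)B^\top$ invertible, so $U_0\in\mathcal{U}_Q$ and $\bigcup_Q\mathcal{U}_Q=\St(n,k)$. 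The displayed special case is the substitution $Q_1=I_k$, $Q_2=\mathbf 0$.

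The step I expect to be the main obstacle is purely computational: carrying out the block inversion of the Cayley relation and, above all, the two simplifications that hinge on the Stiefel constraints $U^\top U=Q^\top Q=I_k$ — that the top-row equation collapses to the stated expression for $\Omega_{11}$, and that this $\Omega_{11}$ is genuinely skew-symmetric — together with pinning down the exact domain $\{\det(U_1+Q_1)\neq0\}$ on which the inverse is defined. There is no conceptual difficulty here, but the matrix algebra must be carried out with care.
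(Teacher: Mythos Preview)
Your Cayley-transform construction is correct and is precisely the origin of the formulas in the statement; the block inversion and the use of $U^\top U=Q^\top Q=I_k$ to collapse the top-row relation to the displayed expression for $\Omega_{11}$ check out, as do the measure-zero and covering arguments. The paper does not supply its own proof of this lemma---it is quoted from \cite{fraikin2007optimization}, where exactly this Cayley parameterization is developed---so there is nothing further to compare.
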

Lemma \ref{lemma:stiefel} provides a neat construction of charts on the Stiefel manifold. In fact, we only need two charts to cover the whole manifold, if we choose any two $Q$'s that do not share any left singular vector. We will use this construction frequently in the proof of Lemma \ref{lemma:submanifold}.

We are now ready to prove Lemma \ref{lemma:submanifold}.
\begin{proof}[\textbf{Proof of Lemma \ref{lemma:submanifold}}]
    We restrict our attention to the case $\F=\R$. The case $\F = \C$ is very similar except that the dimensionalities of some manifolds in the subsequent proof are slightly different.
    
    We aim to construct explicit charts of $\M = \St(n,r) \oplus \S_r$, and explicit charts of $\N_{Z_\#}$, such that the latter are the coordinate slices of the former. For clarity, we will first write out the charts of $\N_{Z_\#}$, and then express them as coordinate slices of charts of $\M$. 
    
    \smallskip
    \noindent \textbf{Step 1: Construct charts of $\N_{Z_\#}$.}
    
    For any $(U,S) \in \N_{Z_\#}$, we rewrite $U$ and $S$ as the following: 
    \begin{align*}
        &U = U_1 P_1^\top + U_3 P_2^\top, \qquad S = P_1 D_1 P_1^\top, \\
        & \text{where} \quad P_1 \in \R^{r \times s}, \quad P_2 \in \R^{r \times (r-s)}, \quad P = (P_1, P_2) \in \text{SO}(r).
    \end{align*}
    We argue that there exists a mapping from every $P_1$ to a unique $P_2$. An intuitive explanation is that $P_2$ can always be uniquely determined by a Gram-Schmidt process starting from the identity matrix. Thus we can write $P_2 = \mathcal{P}_2(P_1)$ where $\mathcal{P}_2: \R^{r \times s} \to \R^{r \times (r-s)}$ is a function.
    Therefore, any $(U,S) \in \N_{Z_\#}$ can be re-parameterized using only $(P_1, U_3)$. We write this re-parameterization as a function $f$:
    \begin{align*}
        f:\quad  
        \N_{Z_\#} &\to \St(r,s) \oplus \widetilde{\St}(n,r-s; \, U_x); \\
        (U,S) &\mapsto  (P_1, U_3).
    \end{align*}
    Here $\St(r,s)$ is a Stiefel manifold, and $\widetilde{\St}(n,r-s; \, U_x)$ is a constrained Stiefel manifold:
    \begin{align*}
        \widetilde{\St}(n,r-s; \, U_x) := \left\{U_3: \,\, U_3 \in \St(n,r-s), \,\, U_3 \perp U_x \right\},\quad \text{where }U_x = (U_1, U_2).
    \end{align*}
    
    We now construct charts for $P_1$ and $U_3$ respectively. The domain of $P_1$ is the Stiefel manifold $\St(r,s)$. By Lemma \ref{lemma:stiefel}, there exists an atlas where every chart function maps to $\text{Skew}(s) \oplus \R^{(r-s)\times s}$. Let $g^{(1)}$ be one such chart function:
    \begin{align*}
        g^{(1)}: \quad \St(r,s) &\to \text{Skew}(s) \oplus \R^{(r-s)\times s} \\
         P_1 &\mapsto (\Omega_{11}, \Omega_{21}).
    \end{align*}
    
    The domain of $U_3$ is the constrained Stiefel manifold $\widetilde{\St}(n,r-s; \, U_x)$. Here $U_x \in \St(n,r)$ is the eigenvectors matrix of the ground truth $X$, which is fixed. To construct a chart of $\widetilde{\St}(n,r-s; \, U_x)$, we first construct a mapping $g^{(2)}$ according to Lemma \ref{lemma:stiefel}, such that 
    \begin{align*}
        g^{(2)}: \quad \widetilde{\St}(n,r-s; \, U_x) &\to \text{Skew}(r-s) \oplus \widetilde{\R}^{(n-(r-s))\times (r-s)}, \\
        U_3 &\mapsto (\Lambda_{11},\Lambda_{21}).
    \end{align*}
    The domain of $\Lambda_{21}$ is $\widetilde{\R}^{(n-(r-s))\times (r-s)}$, which is a constrained set. To express the constraints $U_3 \perp U_x$ in terms of constraints on $\Lambda_{21}$, we write 
    $U_3 = \begin{pmatrix}
        U_{3,1} \\
        U_{3,2}
    \end{pmatrix}$ and 
    $U_x = \begin{pmatrix}
        U_{x,1} \\
        U_{x,2}
    \end{pmatrix}$. Assume without loss of generality that $g^{(2)}$ is constructed by picking $Q = (I_{r-s}, \mathbf{0})^\top$ in Lemma \ref{lemma:stiefel}. Then
    \begin{align*}
        \Lambda_{21} = U_{3,2} (U_{3,1} + I_{r-s})^{-1}.
    \end{align*}
    Since $U_3 \perp U_x$, we have 
    \begin{align*}
        U_x^\top U_3 = U_{x,1}^\top U_{3,1} + U_{x,2}^\top U_{3,2} = 0.
    \end{align*}
    Thus,
    \begin{align*}
        U_{x,2}^\top U_{3,2} = -U_{x,1}^\top U_{3,1}.
    \end{align*}
    This gives us
    \begin{align*}
        U_{x,2}^\top \Lambda_{21} = -U_{x,1}^\top  U_{3,1}(U_{3,1} + I_{r-s})^{-1}.
    \end{align*}
    These are linear constraints on $\Lambda_{21}$.
    
    Let $g$ be the concatenation of $g^{(1)}$ and $g^{(2)}$, then we have a re-parameterization of $(P_1, U_3)$ as follows:
    \begin{align*}
        g: \quad \St(r,s) \oplus \widetilde{\St}(n,r-s) & \to \text{Skew}(s) \oplus \R^{(r-s)\times s} \oplus \text{Skew}(r-s)\oplus  \widetilde{\R}^{(n-(r-s))\times (r-s)};\\
        (P_1, U_3) &\mapsto (\Omega_{11}, \Omega_{21}, \Lambda_{11}, \Lambda_{21}).
    \end{align*}
    Here $\widetilde{\R}^{(n-(r-s))\times (r-s)}$ is the submanifold of $\R^{(n-(r-s)) \times (r-s)}$ defined by the linear constraints that we derived:
    \begin{align*}
        \widetilde{\R}^{(n-(r-s)) \times (r-s)} := \left\{\Lambda_{21} \in \R^{(n-(r-s)) \times (r-s)} : \,\, U_{x,2}^\top \Lambda_{21} = -U_{x,1}^\top  U_{3,1}(U_{3,1} + I_{r-s})^{-1}\right\}.
    \end{align*}

    Let $\Lambda_{21}^\circ$ be an arbitrary solution to the equation $U_{x,2}^\top \Lambda_{21} = -U_{x,1}^\top  U_{3,1}(U_{3,1} + I_{r-s})^{-1}$. Then 
    \begin{align*}
        \widetilde{\R}^{(n-(r-s)) \times (r-s)} = \Lambda_{21}^\circ + \text{Ker}(U_{x,2}^\top).
    \end{align*}
    By finding an orthogonal basis for $\text{Ker}(U_{x,2}^\top)$, it is easy to construct a chart function 
    \begin{align*}
        h: \quad \widetilde{\R}^{(n-(r-s))\times (r-s)} &\to \R^{(n-(r-s))(r-s) - r(r-s)} \\
        \Lambda_{21} &\mapsto \Gamma.
    \end{align*} 
    Putting everything together, we have that 
    \begin{align*}
        \varphi:= (\text{id}, h) \circ g \circ f: \quad \N_{Z_\#} &\to \text{Skew}(s) \oplus \R^{(r-s) \times s} \oplus \text{Skew}(r-s)\oplus \R^{(n-(r-s))(r-s) - r(r-s)}; \\
        (U,S) &\mapsto (\Omega_{11}, \Omega_{21}, \Lambda_{11}, \Gamma).
    \end{align*}
    This is a chart function for the whole $\N_{Z_\#}$ except a zero-measure set. Varying $g^{(1)}$ and $g^{(2)}$ as needed and we have the atlas for the whole $\N_{Z_\#}$.
    
    \smallskip
    \noindent \textbf{Step 2: Express the charts of $\N_{Z_\#}$ as coordinate slices of charts of $\M$.}
    
    To express things into coordinate slices, we will work the other way around: we extend the chart function $\varphi$ into a chart function $\widetilde{\varphi}$ defined on $\M = \St(n,r) \oplus \S_r$. 
    
    For any $(U,S)\in\M = \St(n,r) \oplus \S_r$, we construct a re-parameterization as follows:  
    \begin{align*}
        &U = \Big( (U_1 R_1, 0)  + (M_4, U_3R_2) \Big)
        \begin{pmatrix}
            P_1^\top \\
            P_2^\top
        \end{pmatrix}, \qquad 
        S = (P_1, P_2) \widetilde{S}
        \begin{pmatrix}
            P_1^\top \\
            P_2^\top
        \end{pmatrix}, \\
        &\text{where} \quad P_1 \in \R^{r \times s}, \quad P_2 \in \R^{r \times (r-s)}, \quad P = (P_1, P_2) \in \text{SO}(r), \\
        &  U_3 \in \widetilde{\St}(n,r-s; \, U_1), \quad M_4\in \widetilde{\R}^{n\times s}, \quad \widetilde{S} \in \S_r,\\
        & R_1 \in \text{upper}(s,s), \quad R_2\in\widetilde{\text{upper}}(r-s,r-s).
    \end{align*}
    The domain of $P_1$ is $\text{St}(r,s)$. $P_2$ is still uniquely determined by $P_1$ as before. 
    The domain of $U_3$ is the constrained Stiefel manifold $\widetilde{\St}(n,r-s; \, U_1):= \left\{U_3: \,\, U_3 \in \St(n,r-s), \,\, U_3 \perp U_1 \right\}$. Note that the constraints are only in terms of $U_1$ instead of $U_x = (U_1, U_2)$. 
    The domain of $M_4$ is the linearly constrained subspace $\widetilde{\R}^{n\times s} := \left\{M_4 \in  \R^{n\times s}, \, M_4 \perp U_1\right\}$. The domain of $\widetilde{S}$ is $\S_r$. The domain of $R_1$ is the subspace of $s\times s$ upper triangular matrices. The domain of $R_2$ is the subspace of $(r-s) \times (r-s)$ upper triangular matrices, but with some constraints that will be specified later. We define the following mapping: 
    \begin{align*}
        \widetilde{f}:  \quad \St(n,r) \oplus \S_r &\to \St(r,s) \oplus \widetilde{\St}(n,r-s; \, U_1)  \oplus \widetilde{\R}(n,s)  \oplus \widetilde{\text{upper}}(r-s,r-s) \oplus  \S_r \oplus \text{upper}(s,s);\\
        (U,S) &\mapsto \left(P_1, U_3, M_4, R_2-I_{r-s},
        \widetilde{S} -
        \begin{pmatrix}
            D_1 & 0 \\
            0 & 0
        \end{pmatrix},
        R_1-I_s\right). 
    \end{align*}
    The mapping $\widetilde{f}$ is written in such a way because, if $(U,S) \in \N_{Z_\#}$, then the last few components are all zero, and $f$ is just a coordinate slice of $\widetilde{f}$:
    \begin{align*}
        \widetilde{f}(U,S) = \left(P_1, U_3, 0, 0,
        0, 0\right).
    \end{align*}
    
    For the first part of the image of $\widetilde{f}$, we apply $g$ as before:
    \begin{align*}
        g: \quad \St(r,s) \oplus \widetilde{\St}(n,r-s) & \to \text{Skew}(s) \oplus \R^{(r-s)\times s} \oplus \text{Skew}(r-s)\oplus  \widehat{\R}^{(n-(r-s))\times (r-s)};\\
        (P_1, U_3) &\mapsto (\Omega_{11}, \Omega_{21}, \Lambda_{11}, \Lambda_{21}).
    \end{align*}
    However, the set $\widehat{\R}^{(n-(r-s))\times (r-s)}$ is different from the $\widetilde{\R}^{(n-(r-s))\times (r-s)}$ before, because the constraints only contain $U_1$ but does not contain $U_2$. Fewer constraints mean a larger subspace, and we have
    \begin{align*}
        \widehat{\R}^{(n-(r-s))\times (r-s)} &= \left\{\Lambda_{21} \in \R^{(n-(r-s)) \times (r-s)} : \,\, U_{1,2}^\top \Lambda_{21} = -U_{1,1}^\top  U_{3,1}(U_{3,1} + I_{r-s})^{-1}\right\} \\
        &= \widetilde{\R}^{(n-(r-s))\times (r-s)} + \left(\text{Ker}(U_{1,2}^\top) \backslash \text{Ker}(U_{x,2}^\top)\right) \\
        &=\Lambda_{21}^\circ + \text{Ker}(U_{x,2}^\top) + \left(\text{Ker}(U_{1,2}^\top) \backslash \text{Ker}(U_{x,2}^\top)\right).
    \end{align*}
    Let $h^{(2)}$ be the chart function for the extra subspace, then 
    \begin{align*}
        (h, h^{(2)}): \quad \widehat{\R}^{(n-(r-s))\times (r-s)} &\to  \R^{(n-(r-s))(r-s) - r(r-s)} \oplus \R^{(r-s)(r-s)},\\
        \Lambda_{21} &\mapsto (\Gamma, \Gamma^{(2)}).
    \end{align*}
    Putting them together, we have
    \begin{align*}
         (\text{id}, h, h^{(2)}) \circ g \circ f: \quad (P_1, U_3) \mapsto (\Omega_{11}, \Omega_{21}, \Lambda_{11}, \Gamma, \Gamma^{(2)}).
    \end{align*}
    The chart function $\varphi$ is a coordinate slice of the above mapping.
    
    It suffices to find the chart functions for the remaining components of $\widetilde{f}(U,S)$, i.e., the components $M_4$, $\tilde{S}$, $R_1$, $R_2$.
    For $\tilde{S} \in \S_r$ and $R_1 \in \text{upper}(s,s)$, the domains are Euclidean spaces with natural bases. We now look at $M_4$ and $R_2$. 
    
    Decompose $M_4$ into parts that are parallel to and perpendicular to the subspace of $U_3$: 
    \begin{align*}
        M_4 = M_4^\parallel + M_4^\perp, \quad \text{where } M_4^\parallel = P_{U_3}M_4, \quad M_4^\perp = P_{U_3}^\perp M_4.
    \end{align*}
    Let $M_4^\perp = U_4 R_4$ be the QR decomposition of $M_4^\perp$. Then the whole $M_4$ could be written as 
    \begin{align*}
        M_4 = (U_3, U_4) 
        \begin{pmatrix}
            R_3 \\
            R_4
        \end{pmatrix}, \qquad R_3 \in \R^{(r-s) \times (r-s)}, \quad R_4 \in \text{ upper}(s,s).
    \end{align*}
    In this way, we can re-parameterize $(M_4, R_2)$ using $(U_4, R_2, R_3, R_4)$:
    \begin{align*}
        p: 
        (M_4, R_2-I_{r-s}) &\mapsto (U_4, R_2, R_3, R_4).
    \end{align*}
    The domain of $U_4$ is the constrained Stiefel manifold $\widetilde{\St}(n,s; U_1, U_3)$. Just as before, we can construct a composite function for this constrained Stiefel manifold:
    \begin{align*}
        g^{(3)}: \quad \widetilde{\St}(n,s; U_1, U_3) &\to \text{Skew}(s) \oplus \widetilde{\R}^{(n-s) \times s}, \\
        U_4 &\mapsto (\Pi_{11}, \Pi_{21}); \\
        h^{(3)}: \quad \widetilde{\R}^{(n-s) \times s} &\to \R^{(n-s)s - rs}, \\
        \Pi_{21} &\mapsto \Xi; \\
        (\text{id}, h^{(3)}) \circ g^{(3)}: \quad \widetilde{\St}(n,s; U_1, U_3) &\to \text{Skew}(s) \oplus \R^{(n-s)s - rs}, \\
        U_4 &\mapsto (\Pi_{11}, \Xi).
    \end{align*}
    The remaining components are $R_2$, $R_3$, and $R_4$. The constraints for them come from the requirement that $U$ as a whole is in $\St(n,r)$. This gives
    \begin{align*}
        U^\top U &= \Big( (U_1 R_1, 0)  + (M_4, U_3R_2) \Big)^\top \Big( (U_1 R_1, 0)  + (M_4, U_3R_2) \Big) \\
        &= \begin{pmatrix}
            R_1^\top U_1^\top U_1 R_1 & \mathbf{0} \\
            \mathbf{0} & \mathbf{0}
        \end{pmatrix} 
        + \left( (U_4, U_3)
        \begin{pmatrix}
            R_4 & 0 \\
            R_3 & R_2
        \end{pmatrix} \right)^\top
        \left( (U_4, U_3)
        \begin{pmatrix}
            R_4 & 0 \\
            R_3 & R_2
        \end{pmatrix} \right) \\
        &= \begin{pmatrix}
            R_1^\top R_1 & \mathbf{0} \\
            \mathbf{0} & \mathbf{0}
        \end{pmatrix}
        + \begin{pmatrix}
            R_4 & 0 \\
            R_3 & R_2
        \end{pmatrix}^\top
        \begin{pmatrix}
            R_4 & 0 \\
            R_3 & R_2
        \end{pmatrix} \\
        &= \begin{pmatrix}
            R_1^\top R_1+R_3^\top R_3 + R_4^\top R_4 & R_3^\top R_2 \\
            R_2^\top R_3 & R_2^\top R_2
        \end{pmatrix} = I_r
    \end{align*}
    Denote 
    \begin{align*}
        R_0 := \begin{pmatrix}
            R_2 & R_3 \\
            \mathbf{0} & R_4
        \end{pmatrix} \in \text{ upper}(r,r).
    \end{align*}
    Then the $r \times r$ upper-triangular matrix $R_0$ should satisfy
    \begin{align*}
        R_0^\top R_0 =  
        \begin{pmatrix}
            I_{r-s} & \mathbf{0} \\
            \mathbf{0} & I_s - R_1^\top R_1 
        \end{pmatrix}.
    \end{align*}
    Such $R_0$ is uniquely determined. 
    Therefore, we get the following chart function for the components $(M_4, R_2)$:
    \begin{align*}
        (\text{id}, h^{(3)}) \circ g^{(3)}\circ p:  \quad 
        \widetilde{\R}(n,s) \oplus  \widetilde{\text{upper}}(r-s,r-s) &\to \text{Skew}(s) \oplus \R^{(n-s)s - rs}, \\
        (M_4, R_2-I_{r-s}) &\mapsto (\Pi_{11}, \Xi).
    \end{align*}

    Putting everything together, we get the following chart function for the manifold $\M$:
    \begin{align*}
        \widetilde{\varphi}:= \Big((\text{id}, h, h^{(2)}) \circ g, &\,\, (\text{id}, h^{(3)}) \circ g^{(3)}\circ p, \,\,\text{id}\Big) \circ \widetilde{f}: \\
        \M &\to \Big(\text{Skew}(s) \oplus \R^{(r-s)s} \oplus \text{Skew}(r-s)\oplus \R^{(n-2r+s)(r-s)} \oplus \R^{(r-s)(r-s)} \Big) \\
        & \qquad \qquad \qquad \oplus \Big( \text{Skew}(s) \oplus \R^{(n-s-r)s} \Big)
        \oplus \S_r \oplus \text{upper}(s,s), \\
        (U,S) &\mapsto \left( \Big(\Omega_{11}, \Omega_{21}, \Lambda_{11}, \Gamma, \Gamma^{(2)}\Big), \Big(\Pi_{11}, \Xi\Big), \widetilde{S} -
            \begin{pmatrix}
                D_1 & 0 \\
                0 & 0
            \end{pmatrix},
        R_1-I_s\right).
    \end{align*}
    The chart function $\varphi$ is a coordinate slice of the chart function $\widetilde{\varphi}$. Hence, $\N_{Z_\#}$ is an embedded submanifold of $\M$.
\end{proof}

\end{document}